\definecolor{lred}{RGB}{226, 106, 106}
\definecolor{nred}{RGB}{237, 28, 36}
\definecolor{ddred}{RGB}{255, 0, 0}
\definecolor{lblue}{RGB}{52, 152, 219}
\definecolor{nblue}{RGB}{0, 174, 239}
\definecolor{lyellow}{RGB}{232, 197, 91}
\definecolor{dgreen}{RGB}{0, 148, 68}
\definecolor{l1yellow}{RGB}{217, 224, 33}
\definecolor{l2yellow}{RGB}{216, 177, 64}
\definecolor{lgrey}{RGB}{179, 179, 179}
\definecolor{indigo}{rgb}{0.29, 0.0, 0.51}  % custom colors
\theoremstyle{plain}
\newtheorem{theorem}{Theorem}
\newtheorem{corollary}[theorem]{Corollary}
\newtheorem{proposition}[theorem]{Proposition}
\newtheorem{lemma}[theorem]{Lemma}
\theoremstyle{definition}
\newtheorem{definition}[theorem]{Definition}
\theoremstyle{remark}
\newtheorem{remark}[theorem]{Remark}
\newtheorem{example}[theorem]{Example}
\numberwithin{theorem}{section}
\newcommand{\dfn}[1]{{\em #1}}        % definition
\newcommand{\R}{\mathbb{R}}           % the real numbers
\newcommand{\Z}{\mathbb{Z}}           % the integers
\newcommand{\C}{\mathbb{C}}           % the complex numbers
\DeclareMathOperator{\bd}{\partial}   % boundary
\newcommand{\modp}[1]{\;(\!\!\!\!\!\!\mod #1)}      % mod for display math. \pmod is for inline math 
\newcommand*\bigcdot{\mathpalette\bigcdot@{0.6}}
\newcommand*\bigcdot@[2]{\mathbin{\vcenter{\hbox{\scalebox{#2}{$\m@th#1\bullet$}}}}}
\DeclareMathOperator\tb{tb}                               % Thurston-Bennequin
\DeclareMathOperator\rot{rot}                             % rotation
\DeclareMathOperator{\cp}{\mathbb{C}{{P}}^2}  % CP2
\DeclareMathOperator{\Cont}{Cont}         % group of contactomorphisms 
\DeclareFontFamily{U} {cmr}{}
\DeclareFontShape{U}{cmr}{m}{n}{
  <-6> cmr5
  <6-7> cmr6
  <7-8> cmr7
  <8-9> cmr8
  <9-10> cmr9
  <10-12> cmr8
  <12-> cmr9}{}
\DeclareSymbolFont{Xcmr} {U} {cmr}{m}{n}
\DeclareMathSymbol{\Phi}{\mathord}{Xcmr}{8}
\begin{document}

% title
\title[Small symplectic caps and embeddings of homology balls]{Small symplectic caps and embeddings of \\ homology balls in the complex projective plane}

\author{John Etnyre}

\author{Hyunki Min}

\author{Lisa Piccirillo}

\author{Agniva Roy}

\address{School of Mathematics \\ Georgia Institute of Technology \\ Atlanta, GA}
\email{etnyre@math.gatech.edu}

\address{Department of Mathematics \\ University of California \\ Los Angeles, CA}
\email{hkmin27@math.ucla.edu}

\address{Department of Mathematics \\ Massachusetts Institute of Technology \\ Cambridge, MA}
\email{piccirli@mit.edu}

\address{Department of Mathematics \\ Boston College \\ Chestnut Hill, MA}
\email{agniva.roy@bc.edu}

%\subjclass[2020]{57K43}

% abstract
\begin{abstract}
 We present a handlebody construction of small symplectic caps, and hence of small closed symplectic $4$-manifolds. We use this to construct handlebody descriptions of symplectic embeddings of rational homology balls in $\cp$, and thereby provide the first examples of (infinitely many) symplectic handlebody decompositions of a closed symplectic $4$-manifold. Our constructions provide a new topological interpretation of almost toric fibrations of $\cp$ in terms of symplectic handlebody decompositions.
\end{abstract}

\maketitle
%\tableofcontents
\vspace{-15pt}

%%%%%%%%%%%%%%%%%%%%%%%%%%%%%%%%%%%
\section{Introduction}
%%%%%%%%%%%%%%%%%%%%%%%%%%%%%%%%%%%

The literature contains many constructions of closed symplectic $4$-manifolds, for example as complex submanifolds of $\C P^n$,  via symplectic reduction,  as toric fibrations, or as Lefschetz pencils or fibrations.  To be readily compatible however with the tools commonly used by smooth 4-manifold topologists, it is desirable to have a working theory of how to build closed symplectic manifolds out of handles.  

We already have a good understanding of handlebody constructions of symplectic fillings by work of Eliashberg \cite{Eliashberg:Stein}, Gompf \cite{Gompf:Stein} and Weinstein \cite{Weinstein:handle}. To get a handle description of a closed symplectic $4$-manifold,  one might want to glue such a filling to symplectic cap along a fixed contact $3$-manifold.  But there is presently no fully handle-theoretic construction for symplectic caps; because Weinstein $4$-manifolds only have handles of index at most  $2$, there are no Weinstein $3$- and $4$-handles. Moreover, existing constructions of symplectic caps, e.g. \cite{EtnyreHonda02a}, nearly always produce caps with large homology.  Developing a practical handle theoretic construction of small symplectic caps is the primary goal of this paper. 

Our construction of small symplectic caps relies on a technique of Gay~\cite{Gay:handle} from 2002. Gay's technique suggests building symplectic caps by attaching particular $2$-handles (called a \dfn{convex-concave handle}, see Section~\ref{subsec:handles}) to the convex boundary of a symplectic filling. As the name suggests, after attaching a convex-concave 2-handle one has a symplectic 4-manifold with concave boundary. That concave boundary can then be capped with an (upside down) Weinstein handlebody to obtain a closed symplectic manifold. 

Building closed symplectic manifolds in this way was the original intended purpose of Gay's technique, but to date it has not been carried out because it is difficult to identify the concave boundary produced after the convex-concave 2-handle attachment.  Even when one can identify the resulting contact 3-manifold, it is frequently overtwisted, and hence does not even admit a weak symplectic filling. In this paper, we use recent developments in contact 3-manifold topology, notably \cite{EMM:nonloose}, to get past these technical issues in certain circumstances.  

As a demonstration of how one might work with our symplectic caps in practice,  we use them to construct hypersurfaces of contact type in $\cp$ (equivalently, construct symplectic embeddings in $\cp$ that the hypersurfaces bound).  Understanding the settings in which 3-manifolds embed in 4-manifolds is a hard problem with rich history in both the smooth and symplectic categories.  Perhaps the easiest setting to study is for the simplest 3-manifolds, lens spaces,  and the simplest closed 4-manifold in which they can embed, $\cp$.  In the smooth category little is known.  For example, it is unknown whether four distinct lens spaces can be (disjointly) embedded in $\cp$.  In contrast, in the symplectic category this problem is completely understood. In fact, Vianna \cite{Vianna:exotic} used almost toric fibrations and constructed a family of three disjoint lens spaces that are embedded in $\cp$ as hypersurfaces of contact type (equivalently, three rational homology balls that symplectically embed in $\cp$). Evans and Smith \cite{EvansSmith:markov} then proved that Vianna's are the only hypersurfaces of contact type among all families of lens spaces in $\cp$. Recently, Lisca and Parma \cite{LiscaParma:horiz2} gave a smooth interpretation of Vianna's embeddings using ``horizontal" handle decompositions (Section~\ref{subsec:horiz}). We give a new symplectic interpretation of Vianna's embeddings using symplectic handle decompositions. 

\begin{theorem} \label{thm:main1}
  For any Markov triple $(p_1, p_2, p_3)$ there exists a triple of integers $(q_1,q_2,q_3)$ such that the the three rational homology 4-balls $B_{p_i,q_i}$ are disjointly symplectically embedded in $\cp$.
  Furthermore, the Weinstein handle decompositions of the rational homology balls are sub-decompositions of an explicit symplectic handle decomposition of $\cp$.
\end{theorem}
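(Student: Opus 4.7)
The plan is to produce a single explicit symplectic handle decomposition of $\cp$ inside which three disjoint Weinstein sub-handlebodies for $B_{p_1,q_1}$, $B_{p_2,q_2}$, $B_{p_3,q_3}$ appear; once this is done, both assertions of the theorem follow at once. The strategy is to begin with the disjoint union of the three Weinstein fillings, attach convex-concave $2$-handles in the sense of Gay to turn the three convex boundary components into a single concave boundary, and then cap off with an upside-down Weinstein handlebody. The convex-concave handle technology and the handle-theoretic construction of small symplectic caps developed earlier in the paper will provide the tools, and the non-loose contact structure results of EMM will be used to identify the intermediate concave boundary.

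First I would set up the Weinstein models for each $B_{p_i,q_i}$: each is built from a single $1$-handle and a single $2$-handle, and its convex boundary carries the standard universally tight contact structure on $L(p_i^2, p_i q_i - 1)$. The integers $q_i$ should be chosen so that the attaching and framing data of the convex-concave $2$-handles coming next are compatible with the Markov relation $p_1^2 + p_2^2 + p_3^2 = 3 p_1 p_2 p_3$. The natural combinatorial source for the $q_i$'s is the almost toric fibration of $\cp$ associated to the Markov triple via Vianna's construction; extracting the $q_i$'s from the mutation data of that fibration, equivalently from the Lisca--Parma horizontal handle decomposition, will give the concrete values to use.

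Next I would attach convex-concave $2$-handles to $\bigsqcup_i B_{p_i,q_i}$ with attaching Legendrians and framings dictated by that almost toric fibration data. The output is a symplectic cobordism whose upper boundary is now a single concave contact $3$-manifold, and identifying this contact structure is the crux of the argument. Using the EMM framework for non-loose Legendrian knots together with the explicit attaching data, I would argue that the resulting contact structure is tight (or at least admits a weak symplectic cap), so that it can be filled upside-down. The final step is to cap off with an upside-down Weinstein handlebody consisting of some $2$-handles, $3$-handles, and a single $4$-handle; a Kirby-calculus verification, using the Markov relation to check that the Euler characteristic and intersection form of the resulting closed $4$-manifold are those of $\cp$, concludes that the symplectic manifold produced is $\cp$.

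The hardest step will be the identification in the previous paragraph: convex-concave handle attachments are notorious for producing concave boundaries that are difficult to name, and when they can be named they are frequently overtwisted and hence uncappable. Overcoming this is precisely where the recent non-loose results of EMM become indispensable, and the choice of $q_i$'s must be finely tuned so that the cobordism terminates on a tight contact structure admitting a Weinstein cap. Once this is arranged simultaneously for all three branches of the construction, the three Weinstein fillings appear as sub-decompositions of the total handle decomposition of $\cp$ by construction, yielding the theorem.
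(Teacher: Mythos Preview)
Your architecture is off in a way that would block the argument. The paper does \emph{not} start from the disjoint union $\bigsqcup_i B_{p_i,q_i}$ and then merge the three convex boundaries with several convex-concave $2$-handles. That cannot work: a convex-concave $2$-handle is attached along the binding of an open book in a \emph{convex} boundary and produces a \emph{concave} boundary, so once you have attached one you cannot attach another; and attaching one to each lens space separately would leave you with three disjoint concave components rather than one. There is no mechanism in Gay's setup for ``merging'' convex boundaries.

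The paper's construction is deliberately asymmetric. One starts from the single Weinstein filling $B_{p_3,q_3}$ and attaches \emph{exactly one} convex-concave $2$-handle along a specific positive torus knot in $(L(p_3^2,p_3q_3-1),\xi_{std})$; by Theorem~\ref{thm:cap} the new concave boundary is the result of admissible transverse surgery on the mirror torus knot, and the non-loose surgery result (Theorem~\ref{thm:nonloose-surgery}) identifies this as the connected sum of standard $(L(p_1^2,p_1q_1-1),\xi_{std})$ and $(L(p_2^2,p_2q_2-1),\xi_{std})$. A single concave $3$-handle separates the summands, and one then caps each with the \emph{upside-down} Weinstein $B_{p_1,q_1}$ and $B_{p_2,q_2}$. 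Thus only one convex-concave handle is ever used, and the other two balls appear as concave pieces, not convex ones. The integers $q_i$ are not read off an almost toric picture but are produced by the explicit arithmetic of Proposition~\ref{prop:markovarith}, which is exactly what makes the surgery identification land on the correct lens spaces.

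Your proposed endgame also has a gap: checking Euler characteristic and intersection form does not identify a smooth $4$-manifold as $\cp$. The paper instead converts the symplectic handle decomposition into a genus-one horizontal decomposition and invokes the Lisca--Parma classification (Theorem~\ref{thm:LiscaParma}) to get the diffeomorphism, then Taubes to get the symplectic deformation class.
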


See Section~\ref{subsec:Markov} for the definition of Markov triples, Figure~\ref{fig:Bpqintro} for the rational homology ball $B_{p,q}$ and Definition~\ref{def:decomposition} for the formal definition of a symplectic handle decomposition.

\begin{figure}[htbp]{\scriptsize
  \vspace{0.2cm}
  \begin{overpic}[width=.4\textwidth, tics=20]
  {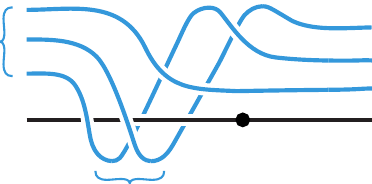}
   \put(54, -7){\color{lblue}$-p$}
   \put(-9, 64){\color{lblue}$q$}
   \put(150, 78){\color{lblue}$-1$}
  \end{overpic}}
  \vspace{0.2cm}
  \caption{A handlebody picture for the rational ball $B_{p,q}$ has boundary the lens space $L(p^2,pq-1)$.  The $-1$ framing on the $2$-handle is relative to the torus framing. Here and throughout, figures should be braid closed.}
  \label{fig:Bpqintro}
\end{figure}

Our techniques can be used to construct many other embeddings of contact lens spaces as hypersurfaces of contact type in $\cp\# n\overline{\cp}$, which we do not record here.  Ultimately,  we hope that with further study our techniques can be used to construct other, possibly exotic, small symplectic manifolds. 

We outline the proof of Theorem \ref{thm:main1} now. The main technical work lies in defining a convex-concave $2$-handle between standard contact structures on (connected sums of) lens spaces. To do this, we attach a convex-concave $2$-handle to the symplectization of a standard contact lens space to obtain a symplectic cobordism with two concave boundary components. One boundary component is the original lens space,  the other is a connected sum of two lens spaces. In Section~\ref{subsec:nonloose}, we show that the contact structure induced on the reducible boundary component is equivalent to the result of Legendrian surgery on some Legendrian torus knot in an overtwisted lens space; this allows us to conclude that the contact manifold is a connected sum of standard contact lens spaces. This cobordism can then be capped off by Weinstein fillings $B_{p_i,q_i}$ of standard contact lens spaces to obtain a closed symplectic manifold. We denote the resulting symplectic manifold by $(X_{p_1,p_2,p_3}, \omega_{p_1,p_2,p_3})$. In Section~\ref{sec:top} we use horizontal handlebody decompositions, introduced by Lisca and Parma \cite{LiscaParma:horiz}, to show that $X_{p_1,p_2,p_3}$ is diffeomorphic to $\cp$. A theorem of Taubes \cite{Taubes:SWGr} then guarantees that $(X_{p_1,p_2,p_3},\omega_{p_1,p_2,p_3})$ is deformation equivalent to $\cp$. 
 
In Section~\ref{sec:toric} we give two additional proofs that $X_{p_1,p_2,p_3}$ is diffeomorphic to $\cp$. The first of these, inspired by Vianna \cite{Vianna:exotic}, inductively identifies our spaces $X_{p_1,p_2,p_3}$  with $\cp$. The second uses a handlebody description of almost toric fibrations to exhibit the diffeomorphism. We conclude the paper by showing that this almost toric fibration structure on $X_{p_1,p_2,p_3}$ indeed agrees with the almost toric structures that Vianna used to build the original embeddings of the rational homology balls.  

\begin{theorem} \label{thm:main2}
  An almost toric fibration of $\cp$ for given Markov triple $(p_1, p_2, p_3)$ is compatible with the symplectic handlebody decomposition from Theorem~\ref{thm:main1}, {\it i.e.} the restriction on each of sub-handlebodies, $B_{p_i,q_i}$ and the pants cobordism is also an almost toric fibration. 
\end{theorem}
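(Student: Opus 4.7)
The plan is to read the almost toric fibration (ATF) of $\cp$ for $(p_1,p_2,p_3)$ as providing both a geometric decomposition (into neighborhoods of the three visible $\mathbb{RP}^2$-type nodal fibers plus their complement) and to match this decomposition piece-by-piece with the symplectic handlebody decomposition of $X_{p_1,p_2,p_3}$ produced in the proof of Theorem~\ref{thm:main1}. Because Section~\ref{sec:toric} already produces a handlebody description from the ATF itself, the bulk of the work is bookkeeping: verifying that each handle of the decomposition of $X_{p_1,p_2,p_3}$ lands in the correct subregion of the base diagram.

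First I would fix Vianna's ATF of $\cp$ associated to the Markov triple $(p_1,p_2,p_3)$ and recall the standard Symington-type ATF on the rational homology ball $B_{p,q}$ whose base diagram is a triangle with a single node of type $(p,q)$ and whose preimage of a neighborhood of the node is a tubular neighborhood of a Lagrangian $\mathbb{RP}^2$-like singular fiber. Using the handlebody picture of ATFs from Section~\ref{sec:toric}, I would show that a neighborhood of the $i$th node in Vianna's base diagram is symplectomorphic to $B_{p_i,q_i}$ carrying exactly the Weinstein handle structure appearing in Theorem~\ref{thm:main1}; this is essentially a local model computation near the node, since both the ATF side and the Weinstein side are determined by the data $(p_i,q_i)$.

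Next I would identify the complement $P := \cp \setminus (B_{p_1,q_1} \sqcup B_{p_2,q_2} \sqcup B_{p_3,q_3})$ inside Vianna's ATF with the pants cobordism $W$ built in Section~\ref{subsec:nonloose} from the convex-concave handle attachment. Both $P$ and $W$ are symplectic cobordisms with the same three concave boundaries, namely the standard contact lens spaces $L(p_i^2,p_iq_i-1)$; restricting Vianna's ATF to $P$ gives $P$ the structure of an ATF (with no nodes) over the truncated base. To match $P$ with $W$ as symplectic cobordisms, I would use the diffeomorphism $X_{p_1,p_2,p_3} \cong \cp$ established in Section~\ref{sec:top} together with Taubes' theorem (already invoked in Theorem~\ref{thm:main1}) to upgrade this to a symplectic deformation equivalence, and then argue that the relative version restricts to a deformation equivalence on the complement of the Weinstein pieces.

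The main obstacle is the rel-boundary matching of the two structures on $P \cong W$: one must ensure the ATF-induced contact structure on each boundary lens space is isotopic to the convex-concave-induced contact structure (both are standard, but one needs a verification), and one must ensure that the identification of $P$ with $W$ is carried out rel the Weinstein pieces so that the global symplectic handlebody decomposition of $\cp$ is actually the restriction of a single ATF. I expect that once the local node model is pinned down in the previous step, the global matching follows from uniqueness of Weinstein fillings of standard contact lens spaces by the $B_{p_i,q_i}$ together with the Moser-type gluing arguments standard in the ATF literature.
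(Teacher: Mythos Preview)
Your broad outline is correct and close to the paper's own argument: match the three $B_{p_i,q_i}$ with neighborhoods of the three nodes in Vianna's ATF, match the complement with the pants, and invoke Taubes for the symplectic deformation. The paper's proof of Theorem~\ref{thm:main2} is literally ``follows from Proposition~\ref{prop:toric},'' and Proposition~\ref{prop:toric} does exactly this kind of piece-by-piece matching.

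Where your proposal diverges, and where it has a genuine gap, is in how you match the pants. You propose to use the Lisca--Parma diffeomorphism $X_{p_1,p_2,p_3}\cong\cp$ from Section~\ref{sec:top}, upgrade it to a deformation equivalence via Taubes, and then restrict to the complement of the Weinstein pieces. But that diffeomorphism is produced by Kirby calculus and Theorem~\ref{thm:LiscaParma}; there is no reason it should carry your three embedded $B_{p_i,q_i}$ to the three nodal neighborhoods in the ATF, so ``restricting'' is not available. You recognize this as the main obstacle, but your proposed fix---uniqueness of Weinstein fillings plus Moser---does not close it: uniqueness of fillings tells you the $B_{p_i,q_i}$ on the two sides are abstractly symplectomorphic, not that a single global diffeomorphism restricts to them.

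The paper sidesteps this entirely. Instead of matching the pants cobordism directly, it reinterprets the ATF complement of \emph{one} ball, $\cp\setminus B_{p_3,q_3}$, as $B_{p_1,q_1}\cup B_{p_2,q_2}$ together with a \emph{round $1$-handle} (Proposition~\ref{complement}). Decomposing the round $1$-handle into an ordinary $1$-handle and a $2$-handle recovers precisely the handle diagram in Figure~\ref{fig:pantsconvert1}, which by Proposition~\ref{prop:pants-equals-pants} is the same as $B_{p_1,q_1}\natural B_{p_2,q_2}$ union the pants. This produces a diffeomorphism $X_{p_1,p_2,p_3}\to\cp$ (Corollary~\ref{3rdp}) that \emph{by construction} carries torus fibers to torus fibers and each $B_{p_i,q_i}$ to the corresponding nodal neighborhood; the extension over the last ball uses only that diffeomorphisms of $\partial B_{p_3,q_3}$ extend (Remark~\ref{rem:pushingpast}(2)). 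Taubes is then applied once, globally, to the pulled-back symplectic form. The round $1$-handle is the missing idea in your sketch; with it, the rel-boundary matching problem never arises.
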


\noindent
{\bf Organization.}
In Section~\ref{sec:prelim}, we collect the background material we will need.  In Section~\ref{sec:geometry}, we build symplectic pants cobordisms between a standard contact lens space and a disjoint union of two standard contact lens spaces and construct the symplectic manifolds $X_{p_1,p_2,p_3}$ into which the rational homology balls $B_{p_i, q_i}$ embed. In Section~\ref{sec:top}, we draw a handle diagram for $X_{p_1.p_2,p_3}$ and prove that the result is $\cp$ equipped with the standard symplectic structure. This proves Theorem~\ref{thm:main1}. In Section~\ref{sec:toric}, we exhibit two other proofs that $X_{p_1,p_2,p_3}$ is $\cp$ and prove Theorem~\ref{thm:main2}. 

\noindent
{\bf Conventions.} 
The lens space $L(p,q)$ is defined to be the $-p/q$ Dehn surgery on the unknot. We define $B_{p,q}$ to be a smoothing of the cyclic quotient singularity of type $(p^2,pq-1)$.  For a handle diagram description of this manifold, see Figure \ref{fig:Bpq}. 

\section*{Acknowledgement}
JE and AR were partially supported by DMS-1906414 and DMS-2203312. JE was also partially supported by the Georgia Institute of Technology's Elaine M. Hubbard Distinguished Faculty Award.  LP was partially supported in part by a Sloan Fellowship, a Clay Fellowship, and the Simons collaboration ``New structures in low-dimensional topology''. Part of this work was done during Graduate Student Topology and Geometry Conference in 2022. JE, HM and AR are grateful to Georgia Tech for the support during this conference. The authors thank Paolo Lisca and Andrea Parma for helpful correspondence about Theorem~\ref{thm:LiscaParma}, and Nicki McGill, B\"ulent Tosun, and Morgan Weiler for useful conversations about several aspects of this paper. The authors also appreciate Charles Livingston for a helpful comment on a rational open book decomposition.  The authors are grateful to the referee for helpful comments.

%%%%%%%%%%%%%%%%%%%%%%%%%%%%%%%%%%%%%%%%%%%
\section{Preliminaries}\label{sec:prelim}
%%%%%%%%%%%%%%%%%%%%%%%%%%%%%%%%%%%%%%%%%%%

In this section we review the background results necessary for our main results. In Section~\ref{subsec:Markov}, we recall the definition of a Markov triple and discuss how to generate all such triples. Transverse surgery is reviewed in Section~\ref{subsec:surgery} and we discuss torus knots in lens spaces in Section~\ref{subsec:torus}. In Section~\ref{subsec:robd}, we recall the definition of and basic facts about rational open book decompositions. Various types of symplectic handle attachments are discussed in Section~\ref{subsec:handles} while Weinstein rational homology balls are built in Section~\ref{subsec:rhb}. Finally we discuss contact structures on lens spaces in Section~\ref{subsec:lens}.

%-----------------------------------------------------
\subsection{Markov triples and the Markov tree}\label{subsec:Markov}
%-----------------------------------------------------
A \dfn{Markov triple} is a triple of positive integers $(p_1, p_2, p_3)$ satisfying 
\[
  p_1^2 + p_2^2 + p_3^2 = 3p_1p_2p_3.
\] 
We note that if $(p_1, p_2, p_3)$ is a Markov triple, then so are the triples $(p_2,p_3,3p_2p_3-p_1)$ and $(p_1,p_3,3p_1p_3-p_2)$ and these are called \dfn{mutations} of the original triple. We can use these relation to build the \dfn{Markov tree}. This is a binary tree with Markov triples $(p_1,p_2,p_3)$ as vertices for $p_1 \leq p_2 \leq p_3$ and an edge connecting two vertices related by mutation. Specifically the root of the tree is $(1,1,1)$ which has a single child vertex $(1,1,2)$. The vertex $(1,1,2)$ also has a single child vertex $(1,2,5)$. Any other vertex $(p_1,p_2,p_3)$ has two child vertices; the left child is $(p_2,p_3,3p_2p_3-p_1)$, and the right child is $(p_1,p_3,3p_1p_3-p_2)$, see Figure~\ref{fig:markov-tree}. 

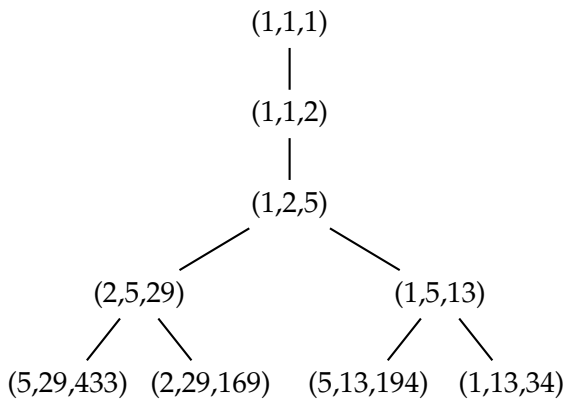
\begin{figure}[htbp]
\centering
\vspace{0.2cm}
\begin{tikzpicture}[thick, level distance=1.2cm,
  level 3/.style={sibling distance=4cm},
  level 4/.style={sibling distance=1.9cm}]
  \node {(1,1,1)}
    child {node {(1,1,2)}
      child {node {(1,2,5)}
        child {node {(2,5,29)}
          child {node {(5,29,433)}}
          child {node {(2,29,169)}}
        }
        child {node {(1,5,13)}
          child {node {(5,13,194)}}
          child {node {(1,13,34)}}
        }
      }
    }; 
\end{tikzpicture}
\caption{The Markov tree.}
\label{fig:markov-tree}
\end{figure}

%-----------------------------------------------------
\subsection{Transverse surgery}\label{subsec:surgery}
%-----------------------------------------------------
Let $K$ be a transverse knot in a contact $3$-manifold $(M,\xi)$. There exist polar coordinates $(r,\theta,\phi)$ on a neighborhood of $K$ such that the contact form $\alpha$ of $\xi$ can be written as
\[
  \alpha = r^2\,  d\theta + d\phi  
\]
where $K$ is identified with the $\phi$-axis and $r \in [0, R)$ for some $R > 0$. For any negative rational number $a$ with $\sqrt{-1/a} < R$, we call $S_a = \{r \leq \sqrt{-1/a}\}$ a \dfn{standard neighborhood of $K$ with slope $-1/a$}. Notice that the characteristic foliation on $\bd S_a$ is the linear foliation with slope $a$.

A {\it transverse surgery} on $(K, S_a)$ is a surgery operation to produce a new contact manifold. There are two types of transverse surgeries: \dfn{admissible} and \dfn{inadmissible} transverse surgeries. For more details, see \cite{BaldwinEtnyre:transverse,Conway:transverse}. In general, the resulting contact structure of transverse surgery depends on the choice of a neighborhood $S_a$. In particular, we can only perform admissible transverse $s$-surgery for $s < a$ (notice that the polar coordinates on $S_a$ determines the framing of $K$). If there is an obvious choice of a neighborhood for $K$, then we omit $S_a$ and just talk about admissible transverse surgery on $K$.  

Conway \cite{Conway:transverse} showed that inadmissible transverse surgery is equivalent to some contact surgery on its Legendrian approximations. In some cases, this is also true for admissible transverse surgery. This is explored in \cite[Lemma~3.16]{BaldwinEtnyre:transverse}, and a simple case of that theorem yields the following result. 

\begin{proposition} \label{prop:admissible-Legendrian}
  Let $S_a$ be a standard neighborhood of the transverse knot $K$. If $s = \lfloor a \rfloor - 1$, then admissible transverse $s$-surgery on $K$ is equivalent to Legendrian surgery on a Legendrian approximation of $K$ in $S_a$. 
\end{proposition}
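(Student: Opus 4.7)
The plan is to identify both sides of the claimed equivalence by using a common Legendrian approximation sitting inside $S_a$, and then to compare the contact structures on the resulting surgery tori.

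First I would locate a Legendrian approximation $L$ of $K$ with contact framing (relative to the framing induced by the polar coordinates on $S_a$) equal to $\lfloor a \rfloor$. This is done by finding a convex torus $T_L$ of dividing slope $\lfloor a \rfloor$ inside $S_a$, which exists because $\lfloor a \rfloor$ lies in the interval of slopes realizable by convex tori between $K$ and $\bd S_a$; take $L$ to be the core of the solid torus $N(L)$ bounded by $T_L$, so that $N(L)$ is a standard Legendrian tubular neighborhood of $L$. The complement $C := S_a \setminus \text{int}(N(L))$ is then a minimally twisting tight contact $T^2 \times I$ with boundary slopes $\lfloor a \rfloor$ and $a$.

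Next I would unpack the two surgery constructions. Legendrian surgery on $L$ removes $\text{int}(N(L))$ and glues back the standard tight contact solid torus $T_s$ whose meridian has slope $\tb(L) - 1 = s$, which is a Weinstein $2$-handle attachment. By definition, admissible transverse $s$-surgery on $(K, S_a)$ removes $\text{int}(S_a)$ and glues back a minimally twisting contact solid torus $T'$ of meridian slope $s$. Since $s = \lfloor a \rfloor - 1 < \lfloor a \rfloor < a$ (when $a \notin \Z$; the integer case is strictly easier, as $C$ is then trivial), the slope $\lfloor a \rfloor$ lies in the Farey interval between $s$ and $a$, so one can find a convex torus of slope $\lfloor a \rfloor$ inside $T'$. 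This splits $T'$ as $T_s' \cup C'$, where $T_s'$ is a contact solid torus with meridian slope $s$ and boundary slope $\lfloor a \rfloor$, and $C'$ is a minimally twisting tight $T^2 \times I$ with boundary slopes $\lfloor a \rfloor$ and $a$; uniqueness of the tight structure on the solid torus with these data gives $T_s' \cong T_s$.

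Finally I would show $C' \cong C$ as tight contact $T^2 \times I$'s, which implies $T' \cong T_s \cup C$, so that removing $S_a$ and gluing in $T'$ coincides with removing $N(L)$ and gluing in $T_s$. By Honda's classification, minimally twisting tight contact structures on $T^2 \times I$ with fixed boundary slopes are classified by signs on a sequence of basic slices, and the gap $a - s < 2$ forces each of $C$ and $C'$ to consist of a single basic slice. The main obstacle, which is exactly the substantive content being distilled from \cite[Lemma~3.16]{BaldwinEtnyre:transverse}, is verifying that the sign of the basic slice comprising $C'$ (as prescribed by the admissibility condition) agrees with the sign of the basic slice comprising $C$ (as inherited from the contact structure on $S_a$). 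With this identification in hand, the two surgery constructions produce the same contact manifold.
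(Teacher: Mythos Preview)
The paper does not prove this proposition; it simply records it as a special case of \cite[Lemma~3.16]{BaldwinEtnyre:transverse}, so there is no argument in the paper to compare against. That said, your sketch is on the right track but has one genuine slip and one unnecessary complication.

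The slip is the assertion that ``the gap $a-s<2$ forces each of $C$ and $C'$ to consist of a single basic slice.'' The layers $C$ and $C'$ have boundary slopes $\lfloor a\rfloor$ and $a$, and for a general rational $a=p/q$ the minimal Farey path from $\lfloor a\rfloor$ to $a$ can have arbitrary length (it is governed by the continued fraction of $a-\lfloor a\rfloor$, not by the real distance $a-\lfloor a\rfloor<1$). So $C$ and $C'$ are typically stacks of several basic slices, and your reduction to comparing a single sign does not go through as stated. The correct statement is that both $C$ and $C'$ are universally tight (all basic slices of one sign), because each sits inside a standard neighborhood of a transverse knot; this still leaves only two possibilities and admissibility pins down the sign.

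The complication is your description of admissible transverse $s$-surgery as removing all of $\operatorname{int}(S_a)$ and regluing a new solid torus $T'$. By definition one removes only the open sub-solid torus bounded by the linearly foliated torus of slope $s$ inside $S_a$ and performs a contact cut there; the annular region between slopes $\lfloor a\rfloor$ and $a$ is literally untouched. With this in hand $C'=C$ on the nose, and the entire comparison reduces to identifying the two tight solid tori with meridian $s=\lfloor a\rfloor-1$ and convex boundary of slope $\lfloor a\rfloor$. Since these slopes are Farey-adjacent, Kanda's uniqueness gives $T_s'\cong T_s$ immediately, and no sign-matching of basic slices is needed at all. This is the streamlined form of the argument that \cite[Lemma~3.16]{BaldwinEtnyre:transverse} is packaging.
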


%------------------------------------------------
\subsection{Convex surface basics}\label{subsec:convexBasic}
%------------------------------------------------
In this section, we will recall the basic facts about convex surfaces that will be needed below. A more detailed discussion, in terms similar to those used here, can be found in \cite{EtnyreHonda01} along with references to where the results first appeared. 

Recall that a \dfn{contact vector field} in a contact 3-manifold $(M,\xi)$ is a vector field whose flow preserves the contact structure $\xi$. An embedded surface $\Sigma$ in $(M,\xi)$ is \dfn{convex} if there exists a contact vector field $X$ transverse to $\Sigma$. If $\Sigma$ is a surface with boundary, then we assume $\bd\Sigma$ to be Legendrian. According to {Giroux}, any closed embedded surface can be $C^\infty$-perturbed to be convex. Kanda showed that if a surface with Legendrian boundary has the contact planes twisting non-positively along the boundary, then the surface may be perturbed (rel boundary) to be convex. 

Given a convex surface $\Sigma$, the \dfn{dividing set} $\Gamma_{\Sigma}$ is a set of points on $\Sigma$ where the contact vector field $X$ is tangent to $\xi$. The dividing set $\Gamma_{\Sigma}$ is an embedded multicurve on $\Sigma$ and its isotopy class is independent of the choice of a contact vector field. The dividing set divides $\Sigma$ into two regions: 
\[  
  \Sigma \setminus \Gamma_{\Sigma} = R_+ \cup R_-
\]
where $R_+ = \{p \in \Sigma : \alpha(X_p) > 0 \}$ and $R_- = \{p \in \Sigma : \alpha(X_p) < 0 \}$. We note that a key property of the dividing set is that one can find an area form on $\Sigma$ and a vector field directing the characteristic foliation of $\Sigma$ so that it points out of $R_+$ and has $\pm$-divergence on $R_\pm$. Given any oriented singular foliation $\mathcal{F}$ on $\Sigma$ and a collection of curves $C$ on $\Sigma$, we say $C$ \dfn{divides} $\mathcal{F}$ if $C$ divides $\Sigma$ into two pieces $R_+$ and $R_-$ with the above properties. 

If $\Sigma$ has Legendrian boundary, then $\tb(\bd\Sigma) = -\frac12 |\Gamma_{\Sigma} \cap \bd\Sigma|$ and $\rot(\bd\Sigma) = \chi(R_+) - \chi(R_-)$. 

Suppose $\Sigma$ is a convex surface transverse to a contact vector field $X$. Let $\phi_t$ be the flow of $X$. Since $\xi$ is invariant under translations in the $t \in \mathbb{R}$ direction, for a small $\epsilon > 0$ we call $\phi_{[-\epsilon,\epsilon]}(\Sigma)$ an \dfn{$I$-invariant neighborhood of $\Sigma$}. If $\mathcal{F}$ is any oriented singular foliation divided by the dividing curves $\Gamma$ on $\Sigma$, then inside any $I$-invariant neighborhood of $\Sigma$, one may isotope $\Sigma$ through convex surfaces so that its characteristic foliation is given by $\mathcal{F}$. This is called \dfn{Giroux flexibility}. Moreover, since the characteristic foliation on a surface determines the contact structure in a neighborhood of the surfaces, we see that the dividing curves of a convex surface ``essentially" determine the contact structure in a neighborhood of the surface. 
In particular, we have the following tightness criterion.
  
\begin{theorem}[Giroux's criterion]\label{thm:criterion}
  Let $(M,\xi)$ be a contact $3$-manifold and $\Sigma$ be a compact convex surface (possibly with Legendrian boundary) in $(M,\xi)$. 
  \begin{itemize}
    \item Suppose $\Sigma = S^2$. An $I$-invariant neighborhood of $\Sigma$ is tight if and only if $|\Gamma_{\Sigma}|=1$. 
    \item Suppose $\Sigma \neq S^2$. An $I$-invariant neighborhood of $\Sigma$ is tight if and only if there are no contractible dividing curves on $\Sigma$.
  \end{itemize}
\end{theorem}

Let $(M,\xi)$ be a tight contact 3-manifold and $T$ a convex torus in $(M,\xi)$. Since $\xi$ is tight, dividing curves on $T$ must be homologically essential curves by Theorem~\ref{thm:criterion}. Once we fix a homological basis of $T \simeq \R^2/\Z^2$, we denote the slope of the dividing curves by $s(T)$, and call it the \dfn{dividing slope of T}. Also, since the dividing set divides $T$ into two regions $R_+$ and $R_-$, there should be an even number of dividing curves. See Figure~\ref{fig:torus} for example.  
\begin{figure}[htbp]{\scriptsize
  \begin{overpic}[tics=20]{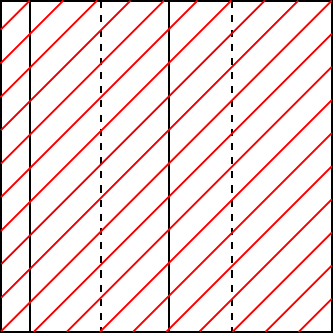}
  \end{overpic}}
  \caption{A characteristic foliation of a convex torus with two dividing curves. The dotted lines are dividing curves. The red lines are ruling curves and the black vertical lines are Legendrian divides.}
  \label{fig:torus}
\end{figure}
Using Giroux flexibility, we can $C^0$-isotop the surface $T$ so that the characteristic foliation on $T$ is in \dfn{standard form}. By this we mean that there are circles worth of singularities (these are points where $T\Sigma=\xi$) between two adjacent dividing curves, these are called \dfn{Legendrian divides}, and the rest of the characteristic foliation is made up of lines of any pre-chosen slope that is not equal to the dividing slope. These lines are called \dfn{ruling curves}. See Figure~\ref{fig:torus} for an example of a standard foliation. 

Now we will describe tight contact structures on a thickened torus $T^2 \times [0,1]$ and a solid torus $S^1 \times D^2$. To do so, we first review the Farey graph. First, we define the \dfn{Farey sum} of two rational numbers to be 
\[
  \frac{a}{b} \oplus \frac{c}{d} := \frac{a+c}{b+d}.  
\]
Also, we define \dfn{Farey multiplication} of two rational numbers to be 
\[
  \frac{a}{b} \bigcdot \frac{c}{d} := ad - bc.
\] 
Now consider the Poincar\'e disk in $\R^2$ equipped with the hyperbolic metric. Label the points $(0,1)$ by $0=0/1$ and $(0,-1)$ by $\infty=1/0$ and add a hyperbolic geodesic between the two points. Take the half circle with non-negative $x$ coordinate. Label any point halfway between two labeled points with the Farey sum of the two points and connect it to both points by a geodesic. Iterate this process until all the positive rational numbers are a label on some point on the half circle. Now do the same for the half circle with non-positive $x$ coordinate (for $\infty$, use the fraction $-1/0$). See Figure~\ref{fig:Farey}. We note that for two points on the Farey graph labeled $r$ and $s$, we have $|r \bigcdot s| = 1$ if and only if there is an edge between them. 

\begin{figure}[htbp]{\scriptsize
  \begin{overpic}[tics=20]{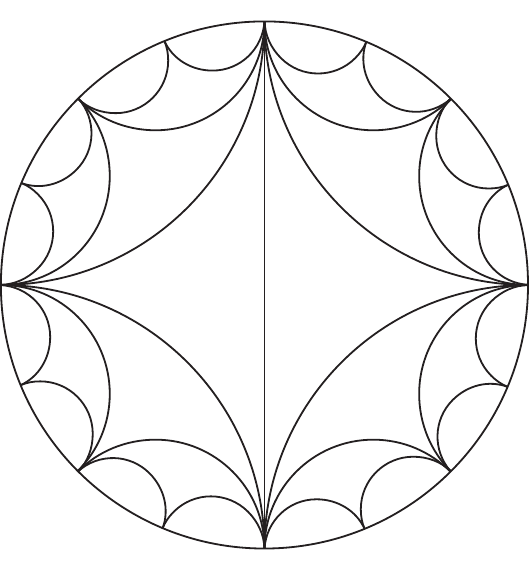}
    \put(123, 0){$\infty$}
    \put(125, 266){$0$}
    \put(-15, 132){$-1$}
    \put(257, 132){$1$}
    \put(20, 38){$-2$}
    \put(222, 38){$2$}
    \put(19, 232){$-1/2$}
    \put(218, 232){$1/2$}
    \put(60, 260){$-1/3$}
    \put(175, 260){$1/3$}
    \put(-13, 185){$-2/3$}
    \put(248, 185){$2/3$}
    \put(-17, 80){$-3/2$}
    \put(248, 80){$3/2$}
    \put(63, 8){$-3$}
    \put(175, 8){$3$}
  \end{overpic}}
  \caption{The Farey graph.}
  \label{fig:Farey}
\end{figure}

A \dfn{basic slice} $B_\pm(s,s')$ is defined to be a minimally twisting contact structure on $T^2 \times I$ such that $T^2\times\{0\}$ and $T^2\times \{1\}$ are convex tori with dividing slope $s$ and $s'$, respectively, where $s'$ is clockwise of $s$ and $|s \bigcdot s'| = 1$. Here \dfn{minimally twisting} means that any convex torus in $B_\pm(s,s')$ that is parallel to the boundary has a dividing slope that is clockwise of $s$ and anti-clockwise of $s'$ in the Farey graph. There are two non-isotopic contact structures satisfying such conditions and they differ by their coorientation. We denote one by $B_+(s,s')$ and the other by $B_-(s,s')$ and call them a positive and negative basic slice, respectively. Since $|s \bigcdot s'| = 1$, there is an edge between $s$ and $s'$ in the Farey graph. Thus we can describe a basic slice $B_\pm(s,s')$ as a decorated path $(s_0=s,s_1=s')$, consisting of a single edge between $s$ and $s'$, and the sign of the edge is the sign of the basic slice.     

We now consider a minimally twisting contact structure on $T^2\times [0,1]$ with dividing slopes $r$ on $T^2\times \{0\}$ and $s$ on $T^2\times \{1\}$.  Let $(r = s_0, \ldots, s_n = s)$ be the minimal path from $r$ to $s$ in the Farey graph. Then we can decompose a minimally twisting tight contact structure on $T^2\times [0,1]$ into basic slices
\[
 B(s_0,s_1) \cup \cdots \cup B(s_{n-1},s_n).
\]
So the contact structure is defined by a choice of signs on the edges between $s_{i-1}$ and $s_i$. If we have a non-minimal path going (with all vertices clockwise of $r$ and anti-clockwise of $s$, this will also define a contact structure on $T^2\times [0,1]$, but it will be tight if and only if it can be consistently shortened to a minimal path. We say the path can be consistently shortened if $|s_{i-1}\cdot s_{i+1}|=1$ and the edge from $s_{i-1}$ to $s_i$ and the edge from $s_i$ to $s_{i+1}$ are the same. The shortened path will be the result of removing $s_i$ and adding the edge from $s_{i-1}$ to $s_{i+1}$ with the sign of the removed edges.

It will be useful to have flexibility in the coordinate used on the boundary of a solid torus. To allow for this we describe a solid torus as follows. Consider $T^2\times [0,1]$ and choose some basis for the homology of $T^2$. The \dfn{solid torus with lower meridian of slope $r$} is formed from $T^2\times [0,1]$ by collapsing the leaves of a foliation of $T^2\times \{0\}$ by circles of slope $r$. We denote this solid torus $S_r$. We can similarly define the \dfn{solid torus with upper meridian of slope $r$} except we collapse leaves of the same foliation on $T^2\times \{1\}$ and denote the result $S^r$. 

We will now consider tight contact structures on a solid torus $S_r$ with boundary being convex with two dividing curves of slope $s$. We will denote such a contact structure by $S_r(s)$. Similarly $S^r(s)$ will denote a tight contact structure on a solid torus $S^r$ with convex boundary having two dividing curves of slope $s$. 

Kanda~\cite{Kanda:torus} showed that there exists a unique tight contact structure on $S_r(s)$ and $S^r(s)$ up to isotopy fixing boundary if $|s \bigcdot r| = 1$. Since there is an edge between $r$ and $s$ in the Farey graph, we can describe the tight contact structure on $S_r(s)$ as a decorated path $(s_0 = r, s_1 =s)$, consisting of a single edge between $r$ and $s$, and the sign of the edge is $\circ$. For $S^r(s)$, we use $(s_0=s, s_1=r)$ instead.              

Next, we described any tight contact structure on $S_r(s)$ and $S^r(s)$ for any $r,s \in \mathbb{Q}$. Let $(r = s_0, \ldots, s_n = s)$ be the minimal path from $r$ to $s$ in the Farey graph. Then we can decompose a tight contact structure on $S_r(s)$ into 
\[
  S_r(s) = S_r(s_1) \cup B(s_1,s_2) \cup \cdots \cup B(s_{n-1},s_n).
\] 
Thus we can describe a tight contact structure on $S_r(s)$ using a decorated path by assigning $\circ$ to the edge $(s_0,s_1)$ and assigning $+$ or $-$ to all other edges $(s_i,s_{i+1})$ according to the sign of the basic slice $B(s_i,s_{i+1})$ for $1 \leq i \leq n-1$ . For $S^r(s)$, we use the path $(s = s_0, \ldots, s_n = r)$ instead and it is the last edge in the path that is assigned a $\circ$ while the others have a $+$ or a $-$.  Giroux \cite{Giroux:classification} and Honda \cite{Honda:classification} proved that any tight contact structure on $S_r(s)$ or $S^r(s)$ can be described this way. (Note all of these contact structures are distinct, but we will not need to determine when two are the same.)

%-----------------------------------------------------------------
\subsection{Rational open book decompositions}\label{subsec:robd}
%-----------------------------------------------------------------
Here, we briefly review rational open book decompositions and their compatible contact structures. For more details, see \cite{BEV:robd}.

If $\mathcal{B}$ is a rationally null-homologous oriented link in a $3$-manifold $M$, then a \dfn{rational Seifert surface} for $\mathcal{B}$ is the image of a map $f:\Sigma\to M$ of an oriented surface $\Sigma$, such that $f$ is an embedding on the interior of $\Sigma$ and $\partial \Sigma$ maps to $\mathcal{B}$, the restriction of $f$ to each component of $\partial \Sigma$ is a positive cover of a component of $\mathcal{B}$, or in other words a positively oriented vector to $\partial \Sigma$ maps to a positively oriented vector on $\mathcal{B}$.

\begin{definition}\label{def:robd}
  A \dfn{rational open book decomposition} for a $3$-manifold $M$ is a pair $(\mathcal{B},\pi)$ consisting of an oriented link $\mathcal{B} = (K_1,\ldots,K_n)$ in $M$ and a fibration $\pi:(M \setminus \mathcal{B}) \to S^1$ such that for any $\theta \in S^1$, $\overline{\pi^{-1}(\theta)}$ is a rational Seifert surface for $\mathcal{B}$. We say $\mathcal{B}$ is the \dfn{binding} of the open book decomposition and each $\overline{\pi^{-1}(\theta)}$ is the \dfn{page} of the open book decomposition. A {\em rationally fibered knot} is one that is the binding of a rational open book. If $\overline{\pi^{-1}(\theta)}$ is a Seifert surface for $\mathcal{B}$ then we call $(\mathcal{B}, \pi)$ an open book decomposition, or sometimes an \dfn{honest} open book decomposition. 
\end{definition}

\begin{remark}
We note that in \cite{BEV:robd} the compatibility between the orientability of the rational Seifert surface and the binding was not made explicit, but was implicit throughout the paper. We make the necessary relation explicit in the definition above.  
\end{remark}

\begin{definition}\label{def:support}
  A (rational) open book decomposition $(\mathcal{B},\pi)$ \dfn{supports} a contact $3$-manifold $(M,\xi_{\mathcal{B}})$ if there exists a contact form $\alpha$ satisfying 
  \begin{itemize}
    \item $\ker\alpha$ is isotopic to $\xi_{\mathcal{B}}$, 
    \item $\alpha(v) > 0$ for any positively oriented tangent vector $v \in T\mathcal{B}$ and
    \item $d\alpha$ is a positive volume form of each page.
  \end{itemize} 
\end{definition}

The contact structure supported by a rational open book is unique up to isotopy \cite{BEV:robd, Grioux:openbook}. When talking about a fibered knot $K$ in a lens space, we will abuse notation and refer to a contact structure as supported by the knot $K$. In general, for multi-component links, one may have their complements fibered in many different ways and hence have many open book decompositions corresponding to the same binding, but if the complement of a knot fibers, then it fibers in a unique way. This may easily be seen by considering the Thurston norm \cite{Thurston1986} and noting that any incompressible surface in the same homology class as the fiber must be isotopic to the fiber. 

We note that the contact structure supported by an open book is very sensitive to the orientation of the binding $\mathcal{B}$, see \cite[Theorem~1.8]{BEV:robd}. However, in this paper, we only consider torus knots in lens spaces, and two equivalent torus knots with different orientations support contact structures that are contactomorphic, see \cite[Theorem~1.8.(2)]{BEV:robd}. Torus knots in lens spaces are rationally fibered, see \cite[Lemma~2.2]{BEV:robd}, and, as mentioned above, we will abuse notation (see for example Corollary~\ref{cor:signEquiv} and Propositions~\ref{prop:postive=univ}, and~\ref{prop:negative=ot}) by referring to {\em a contact structure supported by a torus knot in a lens space}, to mean the contact structure supported by the specific rational open book with binding a torus knot, on the lens space, as mentioned in the lemma. We will also use the phrase {\em lens space supported by a torus knot} to mean the same.

Let $K$ be a binding component of a (rational) open book decomposition $(\mathcal{B},\pi)$ of $M$ and let $N$ be a neighborhood of $K$.  Fix a reference framing on $K$. Then $\pi|_{M \setminus N}^{-1}(\theta) \cap \bd N$ is an essential simple closed curve on $\bd N$. We say the slope of this curve with respect to the reference framing is the \dfn{page slope} of $K$.  {The following theorem shows that for sufficiently small slopes,  the resulting contact structure of admissible transverse surgery on $K$ is supported by essentially the same open book.}

\begin{proposition}[Baker--Etnyre--Van-Horn-Morris \cite{BEV:robd}]\label{prop:robd-surgery}
  Suppose $K$ is a binding component of a (rational) open book decomposition $(\mathcal{B},\pi)$ supporting $(M,\xi_\mathcal{B})$. Then for any $r \in \mathbb{Q}$ less than the page slope of $K$, the resulting contact structure of admissible transverse $r$-surgery on $K$ is supported by $(\mathcal{B}^*,\pi)$ where $\mathcal{B}^*$ is the surgery dual of $\mathcal{B}$.  
\end{proposition}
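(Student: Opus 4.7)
The plan is to construct a compatible contact form for $(\mathcal{B}^*,\pi)$ by modifying the given form only inside a neighborhood of $K$. First, I would pass to a standard neighborhood $N$ of $K$ in which the contact form has the model shape $\alpha = r^2\, d\theta + d\phi$ and in which the pages of $\pi$ meet $\bd N$ along a linear foliation of slope equal to the page slope $s$. The admissible transverse $r$-surgery then amounts to removing $N$ and regluing a new solid torus $N^*$ along slope $r$, with core the dual binding $K^*$. Since both the open book structure and the contact form on $M \setminus N$ are unchanged by the surgery, it suffices to extend the fibration and the supporting contact form across $N^*$.

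The key point is that $r < s$ guarantees that the page foliation on $\bd N$ has non-meridional slope relative to the new solid torus $N^*$, so each page of $\pi|_{M\setminus N}$ can be capped off inside $N^*$ by (a cover of) a meridional disk that meets $K^*$ transversely with the correct orientation. On a model $N^*$ with coordinates in which $K^*$ is the $\phi$-axis, I would take the form $\alpha^* = r^2\, d\theta + d\phi$; the three support conditions of Definition~\ref{def:support} then follow directly. Namely, $\alpha^*$ is positive on $K^*$, and $d\alpha^* = 2r\, dr \wedge d\theta$ restricts to a positive volume form on the new capping disks precisely because the inequality $r < s$ was imposed. The two contact forms can be smoothly interpolated across a collar of $\bd N$ since their boundary characteristic foliations match after regluing; uniqueness of tight contact structures on a solid torus with prescribed boundary foliation then identifies the resulting contact structure with the one produced by admissible transverse $r$-surgery.

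The main obstacle is the slope bookkeeping: one must simultaneously track the page framing, the original standard-neighborhood framing, and the new surgery framing on $T^2 = \bd N = \bd N^*$, and confirm that the hypothesis $r < s$ really forces the correct orientation on the capping disks and the positivity of $d\alpha^*$ on them. A cleaner alternative, at least for slopes of the form $r = \lfloor a \rfloor - 1$, is to invoke Proposition~\ref{prop:admissible-Legendrian} and reduce to Legendrian surgery on a Legendrian approximation of $K$ sitting on a page; compatibility with the open book is then the standard consequence of attaching a Weinstein $2$-handle along a page-Legendrian, and the general rational slopes follow by a limiting argument. Either route ultimately leans on the same model computation in $N^*$, so I expect the direct construction above to be the more transparent presentation.
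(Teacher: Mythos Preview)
The paper does not give a proof of this proposition; it is quoted as a result of Baker, Etnyre, and Van-Horn-Morris and cited from \cite{BEV:robd} without argument, so there is no proof here to compare your proposal against.

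Your outline is nonetheless the natural one and matches the spirit of the original argument. Two small corrections are worth making. First, admissible transverse $r$-surgery is not literally ``remove $N$ and reglue a solid torus $N^*$'': by definition one shrinks to the torus $\{r=\sqrt{-1/r}\}$ inside a standard neighborhood and collapses its linear characteristic foliation, producing the dual core $K^*$ directly. The contact structure on the surgered manifold is thus already specified by this quotient, so no separate interpolation step or appeal to uniqueness of tight structures on solid tori is needed. Second, the substantive content is exactly the orientation check you flag as the obstacle: one must verify that, after collapsing, the pages approach $K^*$ as a positive binding and that $d\alpha$ is positive on the new page caps, and this is precisely where the hypothesis $r<s$ is used. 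Your alternative via Proposition~\ref{prop:admissible-Legendrian} only handles the special slopes $\lfloor a\rfloor -1$, and the suggested ``limiting argument'' for general rational $r$ is not a routine step, so the direct construction is the correct route.
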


The following lemma was proven in \cite{BaldwinEtnyre:transverse, EtnryeVelaVick:openbook} for honest open book decompositions but the same proof works for rational open book decompositions.

\begin{lemma}~\label{lem:binding}
  Let $(\mathcal{B},\pi)$ be a (rational) open book decomposition supporting $(M,\xi_{\mathcal{B}})$.  
  \begin{enumerate}
    \item A standard neighborhood $S_a$ of each binding component can be chosen so that $a$ becomes arbitrarily close to the page slope (which is measured by the framing induced from the polar coordinates of $S_a$).
    \item The complement of the binding is universally tight. Moreover, it does not contain Giroux torsion, but will remain tight when Giroux torsion is added. 
  \end{enumerate}
\end{lemma}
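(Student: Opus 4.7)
The plan is to extend the proofs of \cite{BaldwinEtnyre:transverse, EtnryeVelaVick:openbook} from the honest open book setting to the rational setting, relying on the rational analog of the Giroux-type normal form for supporting contact forms near a binding component from \cite{BEV:robd}.

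For part (1), I would work in a tubular neighborhood $N$ of a single binding component $K$. After isotoping the supporting contact form $\alpha$ into the normal form of \cite{BEV:robd}, there are polar coordinates $(r,\theta,\phi)$ with $\phi$ along $K$ in which $\alpha = h_1(r)\,d\phi + h_2(r)\,d\theta$, and the defining feature of the supporting normal form is that the curve $r \mapsto (h_2'(r), -h_1'(r))$ points in the page-slope direction as $r \to 0$. Since the characteristic foliation on $\{r = r_0\}$ is linear with slope determined by this same curve at $r_0$, shrinking $r_0$ makes the slope of the foliation arbitrarily close to the page slope. A routine change of coordinates then puts $S_a = \{r \leq r_0\}$ into the standard form $\alpha = r^2\,d\theta + d\phi$ from Section~\ref{subsec:surgery}, with the resulting $a$ approaching the page slope.

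For part (2), I would use the fibered structure of $M \setminus \mathcal{B}$. Compatibility of $\alpha$ with $\pi$ implies each page is a convex hypersurface with dividing set $\partial \Sigma$, so cutting along a page exhibits the complement of the binding as a Weinstein (hence Stein fillable) handlebody built from the page thickened to $\Sigma\times[0,1]$ by attaching $2$-handles along Legendrian arcs dictated by the monodromy. Stein fillability gives tightness of the complement, and the entire construction lifts to the universal cover (which is itself fibered by preimages of pages), giving universal tightness. The absence of Giroux torsion then follows by isotoping any hypothetical incompressible torus in the complement to be vertical or horizontal relative to $\pi$; neither configuration is compatible with the convex-torus slopes forced inside a Giroux torsion domain, a contradiction. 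Finally, for the persistence of tightness after adding Giroux torsion, one can model the inserted $T^2\times I$ by lengthening the normal-form curve $r \mapsto (h_1(r), h_2(r))$ so that it winds through additional angle; this preserves compatibility with the (rational) open book, so part (1) applies to the modified form and again yields tightness via Stein fillability.

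The main obstacle I anticipate lies in the framing bookkeeping of part (1). In the rational case the page meets $\partial N$ in a multiple cover of a curve rather than in a longitude, and the framing induced by the polar coordinates of the standard neighborhood differs from the framings in which the page slope is most naturally expressed in \cite{BEV:robd}. Verifying that the normal form can be chosen so that the characteristic-foliation slope limits exactly to the stated page slope, rather than to an affine shift of it, is the step where the proof genuinely uses the rational refinements of \cite{BEV:robd} and is not a purely formal translation from the honest case; once this identification is secured, the remaining arguments go through as in the honest setting.
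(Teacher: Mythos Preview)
The paper does not supply its own proof of this lemma; it simply asserts that the arguments of \cite{BaldwinEtnyre:transverse, EtnryeVelaVick:openbook} for honest open books carry over verbatim to the rational case. Your Part~(1) is in line with what those references do: put the contact form in the radial normal form of \cite{BEV:robd} near a binding component and observe that the characteristic foliation slope on $\{r=r_0\}$ limits to the page slope as $r_0\to 0$. Your caveat about the framing bookkeeping in the rational case is the right thing to flag, and the rational normal form in \cite{BEV:robd} is precisely what handles it.

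Your Part~(2), however, contains a genuine gap. The sentence ``cutting along a page exhibits the complement of the binding as a Weinstein (hence Stein fillable) handlebody built from the page thickened to $\Sigma\times[0,1]$ by attaching $2$-handles along Legendrian arcs'' conflates $3$- and $4$-dimensional objects. The complement of the binding is a $3$-manifold with torus boundary; cutting it along a page yields $\Sigma\times[0,1]$ and nothing more --- there are no $2$-handles to attach, and ``Stein fillable'' is not the relevant notion for a contact manifold with boundary. The argument in \cite{EtnryeVelaVick:openbook} is different and more direct: the Reeb field of a supporting form is transverse to every page, so the contact structure on $M\setminus\mathcal{B}$ is a deformation of the taut foliation by pages; Eliashberg--Thurston then gives universal tightness (and this passes to covers because the fibration does). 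Your torsion claims are also undercooked: the persistence of tightness after inserting Giroux torsion is argued in the references by explicitly writing down the modified contact form on the torus neighborhood (which still has Reeb field transverse to the pages), not by appealing back to Stein fillability, and ``isotope a hypothetical torus to be vertical or horizontal and find a slope contradiction'' needs an actual slope computation to be a proof. Replacing the Stein step with the taut-foliation/transverse-Reeb argument would bring your outline in line with the cited proofs.
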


We finish this section by introducing a lemma which will be used in later sections.

\begin{lemma}\label{lem:coordinates}
  Let $(\mathcal{B},\pi)$ be a (rational) open book decomposition supporting $(M,\xi_{\mathcal{B}})$. Then there is a contact form $\alpha$ for $\xi_{\mathcal{B}}$ and polar coordinates $(r,\theta,\phi)$ near each binding component such that
  \[
    \alpha = \frac{1}{Ar^2+B}(r^2 d\theta + d\phi) 
  \]
  and the projection map is 
  \[
    \pi (r,\theta, \phi) = C\theta + D\phi
  \]
  for some constants $A,B,C,D$. The Reeb vector field for this contact form is given by 
  \[
     R_{\alpha} = A\frac{\bd}{\bd\theta} + B\frac{\bd}{\bd\phi}.
  \]
\end{lemma}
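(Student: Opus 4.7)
The strategy is to build a supporting contact form of the stated shape directly near each binding component, by combining the standard model of a rational open book near its binding with a rescaling of the transverse-knot normal form, then transport this model form to represent $\xi_{\mathcal{B}}$.

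First, I fix a tubular neighborhood $N \cong D^2 \times S^1$ of a binding component $K$ with polar coordinates $(r,\theta,\phi)$ such that $K = \{r = 0\}$ and $\phi$ parametrizes $K$. By Definition~\ref{def:robd}, the pages meet $\partial N$ in a non-meridional linear foliation of slope $-C/D$ with $C, D$ coprime integers and $C \neq 0$. After modifying the identification $N \cong D^2 \times S^1$ by a diffeomorphism isotopic to the identity (extending a linear change of $(\theta,\phi)$ on $\partial N$ and using that a fibration $(D^2 \setminus 0) \times S^1 \to S^1$ is determined up to diffeomorphism by its slope on $\partial N$), I can straighten the projection to $\pi(r,\theta,\phi) = C\theta + D\phi \pmod{2\pi}$ throughout $N \setminus K$.

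Next, I check that the one-parameter family $\alpha_{A,B} := (Ar^2 + B)^{-1}(r^2\,d\theta + d\phi)$ yields a contact form supporting $(\mathcal{B},\pi)$ near $K$ for appropriate constants. Direct computation gives
\[
  \alpha_{A,B} \wedge d\alpha_{A,B} = \frac{2r}{(Ar^2 + B)^2}\, dr \wedge d\theta \wedge d\phi,
\]
a positive volume form on $\{r > 0\}$ whenever $Ar^2 + B$ is nowhere vanishing. At $r = 0$, $\alpha_{A,B} = d\phi/B$, positive along the binding when $B > 0$. Restricting $d\alpha_{A,B}$ to a page, on which $C\,d\theta + D\,d\phi = 0$, yields
\[
  d\alpha_{A,B}\big|_{\mathrm{page}} = \frac{2r(AC + BD)}{D(Ar^2 + B)^2}\, dr \wedge d\theta,
\]
positive precisely when $AC + BD$ agrees in sign with $D$. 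All three conditions are easily satisfied together (e.g. $B = 1$ with $A$ of appropriate sign and magnitude), so $\alpha_{A,B}$ supports $(\mathcal{B},\pi)$ on $N$.

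Finally, I would extend $\alpha_{A,B}$ from a neighborhood of $\mathcal{B}$ over the rest of $M$ using the Thurston--Winkelnkemper construction adapted to rational open books in \cite{BEV:robd}, obtaining a global contact form $\alpha$ supporting $(\mathcal{B},\pi)$. By the (rational) Giroux correspondence, $\ker \alpha$ is isotopic to $\xi_{\mathcal{B}}$ through supporting contact structures; Lemma~\ref{lem:binding}(1) and the contact neighborhood theorem for transverse knots allow the isotopy to be chosen equal to the identity on a (possibly smaller) neighborhood of $\mathcal{B}$. Pulling back $\alpha$ by this isotopy gives a contact form for $\xi_{\mathcal{B}}$ that still agrees with $\alpha_{A,B}$ near each binding component, as required. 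The principal obstacle is carrying out the coordinate straightening of the first step so that the same $(r,\theta,\phi)$ realising $\pi$ as the honest linear function $C\theta + D\phi$ is also compatible with the contact neighborhood theorem invoked in the final step; once this simultaneous normalization is in hand, the rest of the argument is a direct calculation plus Giroux.
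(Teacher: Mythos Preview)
Your approach is correct in outline but takes a considerably longer route than the paper's. The paper proceeds in the opposite order: it first invokes the transverse neighborhood theorem to put a given contact form $\alpha$ for $\xi_{\mathcal{B}}$ into the shape $\frac{1}{Ar^2+B}(r^2\,d\theta+d\phi)$ in suitable polar coordinates near each binding component (this is immediate, since the binding is transverse), and then simply perturbs the fibration $\pi$ so that it becomes the linear map $C\theta+D\phi$ in those same coordinates. No global reconstruction of the contact form, no Thurston--Winkelnkemper extension, and no appeal to Giroux uniqueness is needed.

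The payoff of the paper's ordering is exactly that it dissolves the ``principal obstacle'' you flag at the end: because the coordinates are chosen to normalize $\alpha$ first, and $\pi$ is then adjusted to be linear in those coordinates, there is no compatibility problem between the transverse-knot chart and the linearization of $\pi$. Your route---fix $\pi$ first, then manufacture a compatible $\alpha$ locally, extend, and isotope back to $\xi_{\mathcal{B}}$---can be pushed through, but the last step (arranging the Gray--Giroux isotopy to be the identity near $\mathcal{B}$ so that both normal forms survive the pullback) requires genuine additional argument that you have not supplied. The paper's two-line proof avoids this entirely by exploiting the freedom to perturb $\pi$ rather than $\alpha$.
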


We note that in the proof of this lemma we will see that one has great flexibility in the exact form for $\alpha$, but the choice made in the lemma will give us the needed control over the Reeb vector field to prove Theorem~\ref{thm:cap} where we discuss attaching a symplectic handle to the binding of an open book. 

\begin{proof}
  Let $N$ be a small neighborhood of the binding $B$. By the definition, $\bd N \cap \pi^{-1}(c_0)$ is a set of homologically essential curves on $\bd N$ for $c_0 \in S^1$. Thus we can choose polar coordinates $(r,\theta,\phi)$ near a binding component such that
  \[
    \pi(r,\theta,\phi) = C\theta + D\phi
  \]
  for some constants $C$ and $D$.  

  Since each binding component is a transverse knot, and transverse knots have standard neighborhoods, we can write the contact form $\alpha$ near the binding in terms of the polar coordinates as follows: 
  \[
    \alpha = f(r,\theta, \phi)(r^2 d\theta + d\phi), 
  \]
  where $f$ is a positive function. 
  Since we are interested in a contact structure, not a form, we can scale the contact form $\alpha$ using any positive function. If we rescale by $\frac{1}{f(Ar^2+B)}$  for any positive constants $A$ and $B$ near a binding component, then we can rewrite $\alpha$ as follows:
  \[
    \alpha = \frac{1}{Ar^2+B}(r^2 d\theta + d\phi). 
  \]
  One may easily compute the Reeb vector field is the one claimed.
\end{proof}

%---------------------------------------------------------------------------------------------
\subsection{Torus knots in lens spaces}\label{subsec:torus}
%---------------------------------------------------------------------------------------------
For any relatively prime integers $r$ and $s$,  we define a lens space $L(r,s)$ by
\[
  L(r,s) = S^3_U({-r/s})
\]
where $U$ is the unknot in $S^3$ and $S^3_{U}(-r/s)$ denotes $-r/s$ Dehn surgery on $U$. Let $T$ be a Heegaard torus of $S^3$ which is the boundary of a neighborhood of $U$. Let $(\lambda_U, \mu_U)$ be the coordinates on $T$ where $\mu_U$ is a meridian and $\lambda_U$ is the Seifert longitude of $U$. By observing that $T$ is still a Heegaard torus of the lens space $L(r,s)$ obtained by surgery on $U$, we can keep using $(\lambda_U, \mu_U)$ as coordinates for the Heegaard torus in $L(r,s)$. 

We now set up some convenient terminology. For any relatively prime integers $p$ and $q$, we define a \emph{$(p,q)$--torus knot $T_{p,q}$ in $L(r,s)$} to be a simple closed curve on $T$ in the homology class $p\lambda_U+ q\mu_U$. Unlike in $S^3$, the notion of positive and negative torus knots is ambiguous in general lens spaces. Motivated by Lemma~\ref{lem:fibered}, the following definition was proposed in \cite{BEV:robd}: We say $T_{p,q}$ is a \dfn{positive} torus knot if $q/p \in (0,-r/s)$ when $-r/s > 0$, or $q/p \notin (-r/s,0)$ when $-r/s < 0$ (which is equivalent to $q/p$ being counterclockwise of $-r/s$ and clockwise of $0$ in the Farey graph, see Section~\ref{subsec:lens}), and a \dfn{negative} torus knot if $q/p \in (-r/s,0)$ when $-r/s < 0$, or $q/p \notin (0,-r/s)$ when $-r/s>0$ (which is equivalent to $q/p$ being clockwise of $-r/s$ and counterclockwise of $0$ in the Farey graph, see Section~\ref{subsec:lens}). We also say $T_{p,q}$ is \dfn{trivial} if $|pr+qs| = 1$ or $|q| = 1$, and otherwise, \dfn{nontrivial}. Notice that a trivial torus knot is isotopic to one of the cores of the Heegaard tori. Whenever a knot sits on a surface, the surface induces a framing on the knot and the framing for $T_{p,q}$ induced from the Heegaard torus $T$ is called the \dfn{torus framing} of $T_{p,q}$ and is denoted by $f_T$.

Baker, Van-Horn-Morris, and the first author showed that every torus knot in any lens space is rationally fibered.

\begin{lemma}[Baker--Etnyre--Van-Horn-Morris \cite{BEV:robd}]\label{lem:fibered}
  A torus knot $T_{p,q}$ in a lens space $L(r,s)$ is a rationally fibered knot. Moreover,
  \begin{itemize}
    \item the torus framing is larger than the page slope for positive torus knots,
    \item the torus framing is less than the page slope for negative torus knots,
  \end{itemize}   
  and if the torus knot is nontrivial, then the page and torus framings differ by more than one.
\end{lemma}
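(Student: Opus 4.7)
My plan is to construct the rational open book via the Seifert-fibered structure on the complement of $T_{p,q}$, use Lemma~\ref{lem:coordinates} to read off the page slope, and compare it with the torus framing through a Farey / continued-fraction analysis. The starting observation is that $L(r,s)$ admits a Seifert fibration with regular fiber $T_{p,q}$: foliate the complement of the cores of $V_1$ and $V_2$ by tori parallel to the Heegaard torus $T$, and on each take the $(p,q)$-curve. The cores of $V_1, V_2$ become exceptional fibers of multiplicities $\alpha_1=|p|$ and $\alpha_2=|pr+qs|$ (the intersection numbers of $T_{p,q}$ with the meridians of $V_1$ and $V_2$), so $M := L(r,s)\setminus T_{p,q}$ is Seifert fibered over a disk with two cone points of those orders, and a horizontal (branched) surface in $M$ serves as a page, recovering the existence part of \cite{BEV:robd}.

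By Lemma~\ref{lem:coordinates} we may write $\pi = C\theta + D\phi$ near $T_{p,q}$, where $\theta$ is the meridional angle and $\phi$ runs along $T_{p,q}$, so that the $\phi$-direction on $\partial N(T_{p,q})$ coincides with $f_T$. A direct calculation shows the level sets of $\pi$ on $\partial N(T_{p,q})$ lie in the class $-D\mu + Cf_T$, where $\mu$ denotes the meridian of $T_{p,q}$; hence the page slope measured relative to $f_T$ is $-D/C$. Identifying $\pi$ with the horizontal fibration of $M$, this ratio equals (up to a sign dictated by orientations) the rational Euler number of $M$, a quantity computable from the Seifert invariants $(\alpha_i,\beta_i)$ and which unwinds into a Farey-mediant combination of $q/p$ and $-r/s$.

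The sign of the Euler number is controlled by whether $q/p$ lies clockwise or counterclockwise of $-r/s$ and $0$ in the Farey graph --- exactly the positive/negative dichotomy from the paper. Tracing signs yields the desired inequalities: the torus framing is larger than the page slope for positive torus knots and smaller for negative ones. For nontrivial torus knots ($\alpha_1,\alpha_2\ge 2$), the closed-space relation $|H_1(L(r,s))| = r = |\alpha_1\alpha_2 e_0 + \alpha_2\beta_1 + \alpha_1\beta_2|$ combined with the normalization $0<\beta_i<\alpha_i$ forces the complement's Euler number (with Euler integer $e_0=-1$) to have absolute value $1 + r/(\alpha_1\alpha_2) > 1$, which proves the ``differ by more than one'' assertion.

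The main obstacle I foresee is maintaining consistent sign and orientation conventions across the three coordinate systems involved --- $(\lambda_U,\mu_U)$ on $T$, the meridian-longitude bases on $\partial V_1$ and $\partial V_2$, and $(\mu,f_T)$ on $\partial N(T_{p,q})$ --- and correctly extracting the Seifert invariants from the Heegaard surgery description of $L(r,s)$; once the signs are aligned the remaining steps reduce to a short Farey-graph / continued-fraction calculation.
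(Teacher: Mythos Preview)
The paper does not actually prove this lemma; it cites \cite{BEV:robd} and offers only a one-paragraph heuristic: build a rational Seifert surface for $T_{p,q}$ by taking meridional disks in each Heegaard solid torus $S_1,S_2$ and resolving their intersections on the Heegaard torus, observing that all resolutions carry the same sign (positive or negative according to whether the knot is positive or negative) and that each contributes $\pm 1$ to the difference between the torus and page framings. Your Seifert-fibered approach is a repackaging of the same geometry---the horizontal surface you invoke is exactly this union of resolved meridional disks---but you route the computation through the rational Euler number rather than counting resolutions directly.

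That said, the proposal has genuine gaps beyond the sign bookkeeping you already flag. First, with the paper's conventions ($T_{p,q}=p\lambda_U+q\mu_U$, meridian of $V_2$ equal to $\lambda_U$, new meridian of $V_1$ of slope $-r/s$) the geometric intersection numbers with the two meridians are $|q|$ and $|pr+qs|$, so your identification $\alpha_1=|p|$ is already off; this matters because the paper's triviality condition is stated in exactly these two quantities. Second, and more seriously, your final numerical claim fails the most basic sanity check: for $T_{p,q}\subset S^3$ (so $r=1$) the torus and page framings differ by $|pq|$, whereas your formula $1+r/(\alpha_1\alpha_2)$ yields $1+1/|pq|$. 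So either the identification ``page-slope difference $=$ Euler number'' is wrong (it is, up to an inversion: the horizontal-surface boundary slope relative to the fiber is the \emph{reciprocal} of what you need), or the formula for $|e|$ is wrong; in either case the argument as written does not establish the ``differ by more than one'' statement. The paper's disk-resolution heuristic bypasses all of this: nontriviality forces at least two meridional disks on each side, and every pair of disks (one from each side) meets on $T$, so the number of same-sign resolutions---hence the framing difference---exceeds one. If you want to salvage the Euler-number route you will end up recomputing exactly that resolution count, so the direct argument is both shorter and safer.
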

One may see why this is true (especially the last statement) by noting that $L(r,s)$ is the union of two solid tori $S_1$ and $S_2$ and that a (rational) Seifert surface for $T_{p,q}$ can be built by taking some number of meridional disks in $S_1$ and some other number in $S_2$ and then resolving their intersections on $\partial S_1=\partial S_2$ in either a positive or negative way. In the case $T_{p,q}$ is null-homologous, each resolution contributes $\pm 1$ to the difference between the torus and Seifert framing. This is similar when $T_{p,q}$ is only rationally null-homologous, except that the ``Seifert framing" is not exactly a framing. The fact that the complement of $T_{p,q}$ is fibered follows similarly to the same fact in $S^3$. Specifically, the complement is build by gluing $S^1\times [1,2]\times [0,1]$ to $S_1$ and $S_2$ so that $S^1\times \{i\}\times [0,1]$ is glued to an annulus in $\partial S_i$. The fibration of the $S_i$ by disks (thought of as $0$-handles) and $S^1\times [1,2]\times [0,1]$ by disks (thought of as $1$-handles) gives the fibration of the complement of $T_{p,q}$ by rational Seifert surfaces. 

%------------------------------------------------------------------
\subsection{Symplectic handle attachments}\label{subsec:handles}
%------------------------------------------------------------------
Here we review symplectic $2$-handles constructed by Gay \cite{Gay:handle}. First, for an $n$-dimensional $k$-handle $H = D^k \times D^{n-k}$, we denote the attaching region $\bd D^k \times D^{n-k}$ by $\bd_- H$ and denote $D^k \times \bd D^{n-k}$ by $\bd_+ H$. Also, we orient $\bd_- H$ as the opposite of the boundary orientation and orient $\bd_+ H$ as the usual boundary orientation. These are the orientations one would choose if they wanted to think of the $k$-handle as a (rel. boundary) cobordism from $\partial_-$ to $\partial_+$. We recall some other standard notation for handles. The disk $D^k\times \{0\}$ is called the \dfn{core} of the handle and the disk $\{0\}\times D^{n-k}$ is called the \dfn{co-core}. The boundary of the core is called the \dfn{attaching sphere} of the handle and the boundary of the co-core is called the \dfn{belt sphere} of the handle. 

\begin{definition}\label{def:handles}
  Let $H$ be a $4$-dimensional $k$-handle and $\omega_H$ be a symplectic structure on $H$. 
  \begin{itemize}
    \item $(H,\omega_H)$ is a \dfn{symplectic $k$-handle with convex boundaries} if there is a Liouville vector field on $H$ that points into $H$ along $\bd_- H$ and out of $H$ along $\bd_+ H$. We also call it a \dfn{convex $k$-handle} in short. 

    \item $(H,\omega_H)$ is a \dfn{symplectic $k$-handle with concave boundaries} if there is a Liouville vector field on $H$ that points out of $H$ along  $\bd_- H$ and into $H$ along $\bd_+ H$. We also call it a \dfn{concave $k$-handle} in short. 
  \end{itemize}
\end{definition}

We note that convex handles can be attached to a convex boundary to created a new symplectic manifold that also has a convex boundary, and similarly for concave handles. 
\begin{example}\label{ex:handles}
  Weinstein $k$-handles are convex $k$-handles for $k = 0, 1, 2$. By turning them upside down, we obtain concave $(4-k)$-handles.
\end{example}

Notice that we cannot build a closed symplectic manifold using just concave or convex handles. In \cite{Gay:handle}, Gay showed how to attach a $2$-handle to a convex boundary to create a symplectic manifold with concave boundary. We will describe his work below, but we will call his handle attachment a \dfn{convex-concave $2$-handle}. We note now that one can hope to build a closed symplectic manifold by using convex handles, then one convex-concave $2$-handle and then several concave handles. A main goal of this paper is to show that this strategy can indeed be carried out. 

\begin{definition}\label{def:decomposition}
  We say a symplectic $4$-manifold $(X,\omega)$ admits a \dfn{symplectic handlebody decomposition} if it admits a handlebody decomposition consisting of convex, concave, and convex-concave handles.
\end{definition}

The construction of Gay's convex-concave $2$-handles is somewhat similar to the Weinstein handle construction \cite{Weinstein:handle}, but a convex-concave handle requires a pair of dilating and contracting Liouville vector fields. The model handle is defined as a subset of $(\mathbb{R}^4, \omega_0)$, where $\omega_0 = r_1dr_1d\theta_1 + r_2dr_2d\theta_2$ is the standard symplectic structure with polar coordinates $(r_1,\theta_1,r_2,\theta_2)$. Consider the function $f(r_1,\theta_1, r_2,\theta_2)=-r_1^2+r^2_2$. For positive integers $C$ and $D$, the Liouville vector fields $X_-$ and $X_+$ are defined by 
\begin{align*}
  X_- &= \left(\frac{r_1}{2} - \frac{C}{r_1}\right) \frac{\bd}{\bd r_1} + \frac{r_2}{2} \frac{\bd}{\bd r_2},\\ 
  X_+ &= -\frac{r_1}{2} \frac{\bd}{\bd r_1}  - \left(\frac12 r_2 - \frac{D}{r_2}\right)\frac{\bd}{\bd r_2}.
\end{align*}
$X_-$ is a dilating Liouville vector field transverse to $f^{-1}(y)$ for $-2C < y<0$ and $X_+$ is a contracting Liouville vector field transverse to $f^{-1}(y)$ for $0<y < 2D$. Now we define the handle $H$ as a subset of $f^{-1}[-2C+\epsilon,2D-\epsilon]$ for some small $\epsilon >0$ such that $\bd_- H$ is a subset of $f^{-1}(-2C+\epsilon)$ and $\bd_+ H$ consists of a subset of $f^{-1}(2D - \epsilon)$ and a certain interpolation from $f^{-1}(-2C + \epsilon)$ to $f^{-1}(2D - \epsilon)$. See Figure~\ref{fig:handle}.  

Unlike Weinstein $2$-handles, the attaching sphere $\bd_-H\cap \{r_2=0\}$ is a transverse knot. Also, they can only be attached along a special transverse link to make the entire result concave. 

\begin{figure}[htbp]{
  \vspace{0.2cm}
  \begin{overpic}%[grid,tics=10]
  {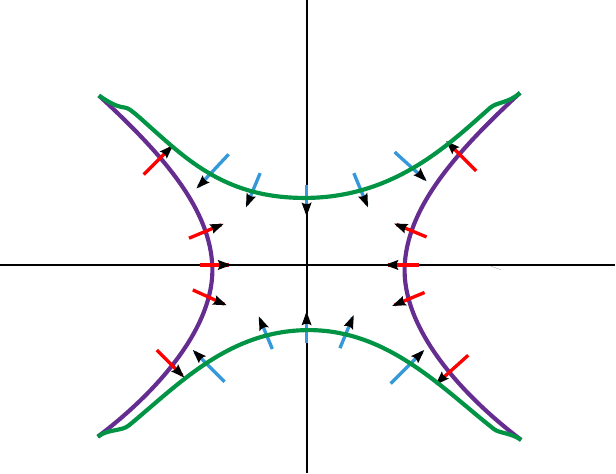}
   \put(155, 220){\large$r_2$}
   \put(285, 110){\large$r_1$}
   \put(212, 110){\large$X_-$}
      \put(122, 152){\large$X_+$}
  \end{overpic}}
  \vspace{0.2cm}
  \caption{A model for a convex-concave handle in $(\mathbb{R}^4,\omega_0)$. Here, $X_-$ is dilating as $\bd_-H$ is oriented as $-\partial H$, and $X_+$ is contracting as $\bd_+H$ is oriented as $\partial H$.}
  \label{fig:handle}
\end{figure}

\begin{definition}\label{def:nice}
  A transverse link $\mathcal{K} \subset (M,\xi)$ is \dfn{nicely fibered} if there exists a fibration $p\colon M \setminus \mathcal{K} \to S^1$ and a contact vector field $X$ on $M$ such that
  \begin{itemize}
    \item $X$ is transverse to the fibers of $p$
    \item For each binding component $K$, there are polar coordinates $(r,\theta,\phi)$ on a neighborhood of $K$ such that 
    \begin{itemize}
      \item $\bd / \bd r$ is tangent to the fibers.
      \item $dr(X) = 0$. 
      \item $X$ and $dp$ are both invariant under $\bd / \bd r$, $\bd / \bd \theta$ and $\bd / \bd \phi$.
    \end{itemize} 
    \item Let $X$ be a coorientation of both $\xi$ and the fibers $F$. The characteristic foliation on $F$ is oriented as the oriented intersection $\xi_p\cap T_pF$, see \cite[Chapter~3]{GuilleminPollack2010}.  Near each binding component $K$  the characteristic foliation points towards $K$.
  \end{itemize}
\end{definition}

Now let $\mathcal{K} = (K_1,K_2,\ldots,K_m)$ be a nicely fibered link in $(M,\xi)$ (equipped with a reference framing) and let $\mathbf{n} = (n_1,n_2,\ldots,n_m)$ be integers that are greater than the fiber slopes of each link component.

\begin{theorem}[Gay \cite{Gay:handle}]\label{thm:concavehandle} 
  With the notation above, let $(W,\omega)$ be a strong symplectic filling of $(M,\xi)$. Then there exist a strong symplectic cap $(W',\omega')$ containing $(W,\omega)$, obtained by attaching $n_i$-framed convex-concave $2$-handles $(H_i,\omega_i)$ along all $K_i$.
\end{theorem}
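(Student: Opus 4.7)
The plan is to carry out Gay's construction by gluing copies of the model convex-concave handle described above onto (an enlargement of) $(W,\omega)$ along neighborhoods of the components $K_i$ of $\mathcal{K}$. First, I would enlarge $(W,\omega)$ by attaching a collar from the symplectization $([0,\epsilon]\times M, d(e^t\alpha))$, producing an equivalent strong filling with an explicit outward-pointing dilating Liouville field $\partial/\partial t$ along the boundary. Using the nicely fibered hypothesis together with Lemma~\ref{lem:coordinates}, one obtains polar coordinates $(r,\theta,\phi)$ near each binding component $K_i$ in which the contact form and the fibration $p$ are both in normal form.

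Next, for each $i$, I would identify a neighborhood of $K_i \times \{t_0\}$ in the collared filling with a neighborhood of the attaching region $\bd_- H_i$ in the model handle inside $(\mathbb{R}^4,\omega_0)$, mapping the polar-plus-symplectization coordinates onto the model coordinates $(r_1,\theta_1,r_2,\theta_2)$ so that $\omega$ matches $\omega_0$ and the symplectization field $\partial/\partial t$ matches the dilating Liouville field $X_-$. The constants $C_i$ and $D_i$ in the handle model are chosen depending on the framing $n_i$ and on the fiber slope of $K_i$; the hypothesis that $n_i$ exceeds the fiber slope is precisely what allows the integer $n_i$-framing to be realized while keeping the attaching circle inside the admissible range $\{-2D_i<f<2C_i\}$ of the model. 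Gluing each $H_i$ via these symplectomorphisms produces a manifold $W' = W \cup (\text{collar}) \cup \bigsqcup_i H_i$ carrying a symplectic form $\omega'$ that restricts to $\omega$ on $W$ and to $\omega_i$ on each handle.

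Finally, to show that $(W',\omega')$ is a strong symplectic cap, one must construct a contracting Liouville field defined near and transverse to $\bd_+ W'$ and pointing into $W'$. Inside each handle this field is the model $X_+$. On the complement of the $K_i$ in $M$, one promotes the contact vector field $X$ provided by Definition~\ref{def:nice} to a Liouville field using the fibration $p$: the transversality of $X$ to the fibers, together with the invariance of $X$ and $dp$ under $\partial/\partial r$, $\partial/\partial \theta$, and $\partial/\partial \phi$, yields a primitive of $\omega'$ in a collar whose dual is contracting, and the coorientation condition ensures it points inward along $\bd_+ W'$. The main obstacle is the final interpolation step: one must verify that the Liouville field built from $X$ on $M\setminus\bigsqcup_i K_i$ and the model field $X_+$ inside each $H_i$ can be joined across the region where the handle's upper boundary meets the fibered complement to produce a single globally defined contracting Liouville field. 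This is exactly where the nicely fibered hypothesis and the slope inequality $n_i > $ (fiber slope of $K_i$) are consumed, and it constitutes the technical heart of Gay's analysis in \cite{Gay:handle}.
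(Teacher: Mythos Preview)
The paper does not give its own proof of this theorem: it is stated as a result of Gay, with the citation \cite{Gay:handle}, and no argument is provided beyond the description of the model handle preceding the statement. So there is no in-paper proof to compare your proposal against. Your outline is a reasonable sketch of Gay's original construction, and indeed the paper's later proof of Theorem~\ref{thm:cap} uses some of the same ingredients (the Reeb field as the contact vector field, the coordinates from Lemma~\ref{lem:coordinates}) to verify the nicely-fibered hypothesis before invoking Theorem~\ref{thm:concavehandle} as a black box. If the intent was to reproduce the paper's treatment, the appropriate response is simply to cite Gay rather than to re-derive the handle model.
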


%----------------------------------------------------------------
\subsection{Lens spaces bounding rational homology balls}\label{subsec:rhb}
%----------------------------------------------------------------
Recall that $B_{p,q}$ is a smoothing of the cyclic quotient singularity of type $(p^2,pq-1)$, {\it i.e.} $B_{p,q}$ is a smoothing of the quotient of $B^4 \subset \mathbb{C}^2$ under the $\mathbb{Z}_{p^2}$-action generated by 
\[
  (z_1,z_2) \mapsto (e^{2\pi i/p^2}z_1, e^{2\pi(pq-1)i/p^2}z_2).
\] 
It is a rational homology ball and can be described by a handlebody diagram, as shown in Figure \ref{fig:Bpq}. For more details, see \cite{LM:balls, Wahl:singularities}.

\begin{figure}[htbp]{\scriptsize
  \vspace{0.2cm}
  \begin{overpic}%[width=.4\textwidth, tics=20]
  {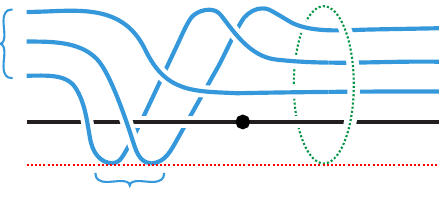}
   \put(55, -2){\color{lblue}$-p$}
   \put(-9, 72){\color{lblue}$q$}
   \put(180, 85){$-1$}
   
   \put(140, 93){$\mu_U$}
   \put(173, 6){$\lambda_U$}
  \end{overpic}}
  \vspace{0.2cm}
  \caption{The rational ball $B_{p,q}$ has boundary the lens space $L(p^2,pq-1)$. Taking the braid closure of the diagram in the figure will give a Kirby diagram for $B_{p,q}$.
  The red longitude is oriented left right and the green meridian is oriented clockwise.  The $-1$ framing on the $2$-handle is relative to the torus framing (note the blue curve sits on a Heegaard torus). }
  \label{fig:Bpq}
\end{figure}

\begin{lemma}\label{lem:Bpq}
  The boundary of $B_{p,q}$ is the lens space $L(p^2,pq-1)$. 
\end{lemma}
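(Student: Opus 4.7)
The approach is a Kirby calculus computation on the handle diagram in Figure~\ref{fig:Bpq}. Since the boundary 3-manifold depends only on the attaching data, I would first replace the dotted $1$-handle by a $0$-framed unknot $U$, which does not change $\partial B_{p,q}$. This reduces $\partial B_{p,q}$ to surgery in $S^3$ on the two-component link consisting of $U$ with framing $0$ together with the $2$-handle $C$, which is the $(q,-p)$-torus knot on the Heegaard torus around $U$ of homology class $q\lambda_U - p\mu_U$. The torus framing of such a curve is the product $-pq$ of the cabling integers, so the integer framing of the $2$-handle is $-pq-1$ and $\mathrm{lk}(U, C) = -p$. The resulting linking matrix
\[
  L = \begin{pmatrix} 0 & -p \\ -p & -pq-1 \end{pmatrix}
\]
has $\det L = -p^2$ and entries of $\gcd$ equal to $1$, giving $H_1(\partial B_{p,q}) \cong \Z/p^2$ from the Smith normal form.

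To identify $\partial B_{p,q}$ with the specific lens space $L(p^2, pq-1)$, I would then carry out handle slides and blow-downs to convert this two-component link into a linear chain of unknots whose integer framings $(a_1, \ldots, a_k)$ realize the negative continued fraction expansion $[a_1, \ldots, a_k]^- = -p^2/(pq-1)$; by our convention $L(r,s) = S^3_U(-r/s)$, this exhibits $\partial B_{p,q}$ as $L(p^2, pq-1)$. Alternatively, the linking form on $H_1(\partial B_{p,q}) \cong \Z/p^2$ can be read off from $-L^{-1}$: a generator has self-linking $-(pq+1)/p^2 \pmod 1$, and the arithmetic identity $(pq+1)(pq-1) \equiv -1 \pmod{p^2}$ shows that changing generator by the unit $pq-1 \in (\Z/p^2)^\times$ converts this value to $(pq-1)/p^2$, matching the linking form of $L(p^2, pq-1)$.

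The main obstacle is executing the Kirby reduction explicitly: the $(q,-p)$-torus knot $C$ must be unwound against the $0$-framed $U$ through a sequence of slides and blow-ups/blow-downs, and the bookkeeping is tedious. The linking-form route is arithmetically cleaner but requires separately knowing that $\partial B_{p,q}$ is genuinely a lens space (rather than a rational homology sphere merely sharing its first homology and linking form); this follows from the chain reduction or, more directly, from exhibiting the genus-one Heegaard splitting of $\partial B_{p,q}$ obtained by capping the Heegaard torus $T$ along $C$ with the core disk of the $2$-handle.
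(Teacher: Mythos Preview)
Your plan is sound, and either route (Kirby reduction to a linear chain, or linking form plus the genus-one Heegaard observation) would give a correct proof. However, the paper proceeds differently, and the difference is worth noting.

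The paper's argument is built around Lemma~\ref{lem:surfacetwist}: since the blue $2$-handle is attached along a curve on a Heegaard torus for $S^1\times S^2$ with framing $f_T-1$, the surgery simply composes the Heegaard gluing with a Dehn twist along that curve. This immediately exhibits $\partial B_{p,q}$ as a lens space via a genus-one splitting, with no Kirby moves required. To identify \emph{which} lens space, the paper tracks the meridian disk of the inner solid torus: the $0$-framed pushoff of the black curve is $\lambda_U$, and ``pushing it past'' the blue surgery (i.e.\ applying the Dehn twist) sends
\[
\lambda_U \longmapsto \lambda_U - p(-p\mu_U + q\lambda_U) = p^2\mu_U + (1-pq)\lambda_U,
\]
so the new meridian has slope $-p^2/(1-pq)$ and the boundary is $L(p^2,pq-1)$.

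Your approach trades this Heegaard-twist computation for the linking matrix and either a chain of blow-ups/slides or a linking-form identification. Both are standard and correct, but as you yourself note, the Kirby reduction is tedious and the linking-form route still needs the lens-space input. The paper's method packages both the ``it is a lens space'' step and the ``which lens space'' step into a single matrix-free calculation, and---more to the point---this ``pushing past a torus surgery'' technique is reused repeatedly later (Remark~\ref{rem:pushingpast}, Propositions~\ref{prop:feet-to-waist} and~\ref{prop:pants-equals-pants}, Lemma~\ref{lem:mutate}). So the paper's choice is made with an eye toward those later computations, whereas your approach, while perfectly adequate for this lemma in isolation, would not set up that machinery.
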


There are several standard arguments for proving this; we will present a proof which uses a method we will rely on heavily later in the paper.  We will require the following classical lemma, see for instance \cite[Lemma~9.I.4]{Rolfsen:book}. 

\begin{lemma}\label{lem:surfacetwist}
  Let $M$ be a 3-manifold with a Heegaard splitting $V_1\sqcup_{\phi} V_2$ where $V_i$, are genus $g$ handlebodies that are glued together by a diffeomorphisms $\phi:\bd V_1\to-\bd V_2$. Let $\gamma$ be a simple closed curve on $\bd V_1$, and suppose $M'$ is obtained by Dehn surgering $M$ along $\gamma$ with framing $f_{\bd V_1}\pm 1$. Then $M'$ has a Heegaard splitting $V_1\sqcup_{\tau^{\mp 1}_\gamma\circ\phi} V_2$, where $\tau_\gamma$ denotes the right-handed Dehn twist of $\bd V_1$ along $\gamma$. 
\end{lemma}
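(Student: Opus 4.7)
The plan is to show that $\pm 1$-Dehn surgery along a curve $\gamma$ on a Heegaard surface can be realized by cutting the Heegaard splitting along an annular neighborhood of $\gamma$ in the surface and regluing by a Dehn twist. First I would choose a solid torus neighborhood $N(\gamma) \subset M$ so that $A := N(\gamma) \cap \partial V_1$ is an annular neighborhood of $\gamma$ in the Heegaard surface $\Sigma = \partial V_1$, and so that $N(\gamma) \cap V_i$ is a thickened annulus (a $3$-ball) for each $i=1,2$. Identify $\partial N(\gamma)$ with $\gamma \times \partial D^2$ so that the surface framing longitude $\lambda = f_{\partial V_1}$ is $\gamma \times \{p\}$ for some point $p$ of $\partial D^2$ lying in $\Sigma \setminus A$, and let $\mu$ denote a meridian bounding a disk in $N(\gamma)$. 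By definition, $M'$ is obtained by removing $N(\gamma)$ and regluing a solid torus whose meridian maps to $\mu \pm \lambda$.

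Next I would give a second description of $M'$ that keeps $N(\gamma)$ in place: let $M''$ be the manifold obtained from $M$ by cutting along the annulus $A \subset \Sigma$ and regluing by $\tau_\gamma^{\mp 1}$, supported on $A$. I claim $M'' \cong M'$. To see this, note the cut-and-reglue leaves $V_1$, $V_2$, and the complement $M \setminus N(\gamma)$ of the solid torus neighborhood unchanged set-wise, but changes the identification on $\partial N(\gamma)$: the two half-meridians $\mu \cap V_1$ and $\mu \cap V_2$ of $N(\gamma)$ no longer close up along $A$, because regluing by $\tau_\gamma^{\mp 1}$ shifts the $V_2$ half by $\mp\lambda$. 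Thus the curve in $\partial N(\gamma)$ bounding a new meridian disk in the resulting solid torus is $\mu \pm \lambda$, matching the definition of $M'$; the sign follows by tracking the orientation conventions for a right-handed Dehn twist.

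Finally, observe that the cut-and-reglue description of $M''$ is, by construction, the Heegaard splitting $V_1 \cup_{\tau_\gamma^{\mp 1} \circ \phi} V_2$: we have not altered the handlebodies $V_1, V_2$, only postcomposed the gluing map on $\partial V_1$ with $\tau_\gamma^{\mp 1}$ (which is the identity outside $A$). Combining the two displays $M'$ as this modified Heegaard splitting. The only subtle point, and the place where the most care is needed, is the sign bookkeeping in step two—verifying that a \emph{right-handed} Dehn twist $\tau_\gamma$ applied to the $V_2$-side of $A$ shifts the half-meridian by $-\lambda$ (so that $\tau_\gamma^{-1}$ produces the $+\lambda$ shift and thus realizes $(f_{\partial V_1} + 1)$-surgery). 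Everything else is an immediate consequence of the product structure near $\gamma$ on the Heegaard surface.
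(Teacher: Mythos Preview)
The paper does not actually prove Lemma~\ref{lem:surfacetwist}; it simply cites \cite[Lemma~9.I.4]{Rolfsen:book} and uses the result as a black box. Your argument is the standard one (and essentially the one Rolfsen gives): realize the $\pm 1$-surgery as a cut-and-reglue along an annular neighborhood of $\gamma$ in the Heegaard surface, and observe that this modifies only the gluing map by $\tau_\gamma^{\mp 1}$. The proof is correct, and your identification of the sign check as the only delicate point is accurate.
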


\begin{proof}[Proof of Lemma \ref{lem:Bpq}]
  Let $\mu_U$ and $\lambda_U$ be the two oriented curves in the boundary $\partial(B_{p,q})$ shown in Figure~\ref{fig:Bpq}. If we ignore the blue curve in the figure, then the boundary manifold is $S^1\times S^2$, thought of as $0$ surgery on the dotted curve in the figure, and is the union of two solid tori. Now $S^1\times S^2$ is the union of two solid tori. The first, $V_1$, is the surgery solid torus obtained from surgery on the dotted curve and the second, $V_2$, is its complement. Notice that $\lambda_U$ bounds a disk in the outside solid torus. We can think of $\partial(B_{p,q})$ as the result of surgery along the blue knot in $S^1\times S^2$. Since the blue knot lies on a Heegaard torus for $S^1\times S^2$ and we surger the blue knot with 1 less than the Heegaard torus framing, performing the blue surgery has the effect of modifying the gluing map of the Heegaard splitting as indicated in Lemma~\ref{lem:surfacetwist}. So we see that $\partial(B_{p,q})$ is a lens space.

  To compute which lens space, we will determine which simple closed curve in $\partial V_2$ bounds a disk in the inside solid torus. Observe that the 0-framed push-off of the black knot bounds a disk in $V_1$, this is the curve $\lambda_U$ thought of as sitting on a torus just inside the blue curve. We will now isotope $\lambda_U$ ``past'' the blue surgered curve, and identify it in $\partial V_2$. Pushing past the blue curve yields $\lambda_U-p(-p\mu_U+q\lambda_U)=p^2\mu_U+(1-pq)\lambda_U$. Thus we conclude that $\partial(B_{p,q})$ is the lens space $L(p^2,pq-1)$. 
\end{proof}

\begin{remark}\label{rem:pushingpast}
  \noindent We collect a few facts we will require about $B_{p,q}$. 
  \begin{enumerate}
  \item The rational homology ball $B_{p,q}$ admits Weinstein structures for each $\pm\xi_{std}$. See Figure~\ref{fig:BpqWeinstein} for Weinstein handlebody diagrams.

  \begin{figure}[htbp]{
    \vspace{0.2cm}
    \begin{overpic}[tics=20]{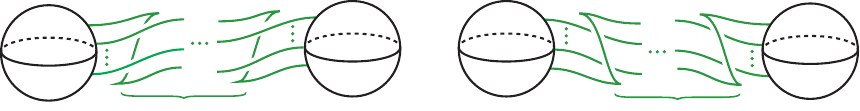}
    \put(86, -5){\color{dgreen}\small$p$}
    % \put(127, 27){\small$q$}
    \put(324, -5){\color{dgreen}\small$p$}
    %\put(349, 25){\small$q$}
    \end{overpic}}
    \vspace{0.2cm}
    \caption{Weinstein $B_{p,q}$ for $(L(p^2,pq-1),\xi_{std})$ and $(L(p^2,pq-1),-\xi_{std})$, there are $q$ strands. }
    \label{fig:BpqWeinstein}
  \end{figure}

  \item The contactomorphism $\tau\colon L(p^2,pq-1) \to L(p^2,pq-1)$ from Corollary~\ref{cor:signEquiv} below extends to a diffeomorphism of $\overline{\tau}\colon B_{p,q} \to B_{p,q}$.  This follows immediately from the definition of $\tau$.   

  \item Because we will make use of such computations later, we also compute for reference the image of the meridian $\mu_U$ of black curve in Figure~\ref{fig:Bpq} when we push it past the blue surgery; here we see $\mu_U \mapsto (1+pq)\mu_U-q^2\lambda_U$. So if we take $(\lambda_U,\mu_U)$ to be a basis for $H_1(T^2)$, then the matrix describing the images of the longitude and meridian of the black curve after isotoping past the blue surgery curve is 
  \[
    \begin{pmatrix}
      1-pq  & -q^2\\ 
      p^2 & 1+pq 
    \end{pmatrix}. 
  \]
  \end{enumerate}
\end{remark}

%---------------------------------------------------------------------------------------------
\subsection{Contact structures on lens spaces and decorated paths}\label{subsec:lens}
%---------------------------------------------------------------------------------------------
Here, we review how to build a contact structure on a lens space. Recall that in Section~\ref{subsec:convexBasic}, we use edges in the Farey graph to describe contact structures on solid tori $S_r(s)$ and $S^u(s)$. By gluing $S_{-r/s}(p/q)$ and $S^0(p/q)$ together along the boundary, we can construct a contact structure on $L(r,s)$ and describe it using a decorated path $P = \{s_0=-r/s, \ldots,s_n=p/q,\ldots, s_m=0\}$ in the Farey graph where the first and last edges of the path are decorated with $\circ$ and the rest by a $+$ or $-$. 

Giroux \cite{Giroux:classification} and Honda \cite{Honda:classification} classified tight contact structures on lens spaces. We can state part of their classification in terms of decorated paths in the Farey graph.

\begin{theorem}[Giroux \cite{Giroux:classification}, Honda \cite{Honda:classification}]\label{thm:lens}
  Let $P$ be a decorated path from $-r/s$ to $0$ in the Farey graph and $\xi$ be the corresponding contact structure on $L(r,s)$. Then
  \begin{enumerate}
    \item if $P$ is minimal, then $\xi$ is tight.
    \item if $P$ is minimal and all edges have the same sign except for the first and the last ones which are decorated with $\circ$, then $\xi$ is universally tight. 
    \item if $P$ is minimal and contains both $+$ and $-$ signs, then $\xi$ is tight but virtually overtwisted.  
    \item if $P$ is not minimal and there are two adjacent edges $(s_{i-1},s_{i})$ and $(s_i,s_{i+1})$ with different signs that can be shortened ({\it i.e.\ } $|s_{i-1}\bigcdot s_{i+1}|=1$), then $\xi$ is overtwisted.
  \end{enumerate}
\end{theorem}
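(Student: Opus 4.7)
The plan is to decompose $L(r,s) = S_{-r/s}(p/q) \cup S^0(p/q)$ along a convex torus of slope $p/q$, and then subdivide each solid torus further into its core (capped off by the $\circ$-decorated edge at an end of the path) plus a stack of basic slices, one for each interior edge of the path $P$. By construction, the $\pm$ decoration of each interior edge $(s_i,s_{i+1})$ records the sign of the basic slice $B_{\pm}(s_i,s_{i+1})$, and the whole contact manifold is the concatenation of these pieces. With this dictionary in place, each claim translates directly into a statement about $T^2 \times I$ that is known from the work of Giroux and Honda.

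For (1), Honda's classification of minimally twisting tight contact structures on $T^2\times I$ says that given any minimal path of slopes, every choice of signs on the interior edges yields a tight, minimally twisting contact structure; gluing on the two standard tight solid tori at the ends does not create an overtwisted disk, since any convex disk can be isotoped to lie inside one of the tight pieces. For (2), I would lift to the universal cover $S^3 \to L(r,s)$ and observe that a uniform sign decoration means the sequence of bypasses all attach from the same side, so the lift extends to the standard (universally tight) contact structure on $S^3$; conversely for (3), a mixed-sign but minimal path is still tight by (1), but after passing to a sufficiently large finite cover the opposite-sign basic slices stack into a Giroux-torsion layer, producing an overtwisted disk in the cover and hence virtual overtwistedness.

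The main work lies in (4). The key local model is a pair of adjacent basic slices $B_{\epsilon_1}(s_{i-1},s_i)$ and $B_{\epsilon_2}(s_i,s_{i+1})$ whose outer slopes satisfy $|s_{i-1} \bigcdot s_{i+1}|=1$, so that $s_{i-1},s_i,s_{i+1}$ form a triangle in the Farey graph. Honda's shortening lemma says that this thickened torus $T^2 \times I$ with boundary slopes $s_{i-1}$ and $s_{i+1}$ admits exactly two tight contact structures, namely $B_+(s_{i-1},s_{i+1})$ and $B_-(s_{i-1},s_{i+1})$; moreover each of these decomposes in the obvious (sign-consistent) way into two basic slices through the intermediate slope $s_i$. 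Thus if $\epsilon_1 \neq \epsilon_2$, the resulting contact structure cannot be tight. To exhibit an overtwisted disk concretely, I would use the bypass attachment lemma: on a convex annulus connecting the two boundary tori, an opposite-sign bypass pair forces the dividing set to contain a closed bigon, which may be capped off to a disk whose characteristic foliation has a limit cycle, i.e., an overtwisted disk. This bypass calculation, and the check that it survives gluing into the ambient lens space, is the step I expect to be most delicate; the remainder of the argument is essentially bookkeeping on the Farey graph.
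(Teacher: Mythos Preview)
The paper does not prove this theorem at all; it is quoted as a known classification result of Giroux and Honda and used as a black box. So there is no ``paper's own proof'' to compare against.

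That said, your outline is a reasonable sketch of how Honda's argument goes, with two caveats. In~(3), the mechanism you name is not quite right: passing to a finite cover does not produce a Giroux torsion layer out of two opposite-sign basic slices. What actually happens is that in a suitable cover the decorated path is no longer minimal, and one finds adjacent opposite-sign edges that can be shortened, so the overtwistedness comes from case~(4) applied upstairs. In~(1), the sentence ``any convex disk can be isotoped to lie inside one of the tight pieces'' is not a proof of tightness and would need real work; Honda instead argues by a direct classification (counting the tight structures with given boundary data and matching that count against the number of sign choices). Your argument for~(4) via the counting of tight structures on the shortened $T^2\times I$ is exactly Honda's shuffling; the explicit bypass/bigon construction of an overtwisted disk is a nice picture but not needed once you have the count.
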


See Figure~\ref{fig:decoratedFarey} for an example of an overtwisted contact structure on $L(3,1)$ obtained by gluing $S_{-3}(-8/5)$ and $S^0(-8/5)$ together.  

According to the previous theorem, there exist two universally tight contact structures on $L(p,q)$ and they differ by coorientation. We denote them by $\xi_{std}$ and $-\xi_{std}$, and call them the \dfn{standard contact structures} on $L(p,q)$.

\begin{remark}\label{rmk:ambiguity}
  In this paper, we will not strictly distinguish $\xi_{std}$ and $-\xi_{std}$, since they are contactomorphic (see Corollary~\ref{cor:signEquiv}). When we refer to a standard contact structure on $L(p,q)$, it could be either $\xi_{std}$ or $-\xi_{std}$.
\end{remark}

In later sections, we will glue two symplectic manifolds together along their boundaries, which are a lens space, using a contactomorphism. Thus we review contactomorphisms on lens spaces here. We denote by $\Cont(L(p,q), \xi_{std})$ the group of coorientation preserving contactomorphisms of $L(p,q)$ with its standard contact structure.  

\begin{theorem}[Min \cite{Min:cmcg}]\label{thm:cmcg}
  The contact mapping class group of $(L(p,q),\xi_{std})$ is 
  \[
    \pi_0(\Cont(L(p,q), \xi_{std})) = \begin{cases} 
      \mathbb{Z}_2 &\; q \not\equiv \pm1 \text{ and }\ q^2 \equiv 1 \modp p,\\
      1 &\; \text{otherwise}. \end{cases}
  \]
\end{theorem}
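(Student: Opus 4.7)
The plan is to compute $\pi_0(\Cont(L(p,q), \xi_{std}))$ by studying the natural homomorphism $\iota\colon \pi_0(\Cont(L(p,q), \xi_{std})) \to \pi_0(\Diff^+(L(p,q)))$ to the smooth mapping class group. The target is known from Bonahon and Hodgson--Rubinstein to be $\mathbb{Z}_2$ exactly when $q^2 \equiv 1 \modp p$ (and $p > 2$), generated by an orientation-preserving involution $\tau$, and trivial otherwise. The theorem then reduces to showing (i) $\iota$ is injective, and (ii) exactly which smooth mapping classes are represented by a coorientation-preserving contactomorphism of $\xi_{std}$.

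For injectivity, I would argue as follows. Suppose $\phi \in \Cont(L(p,q), \xi_{std})$ is smoothly isotopic to $\mathrm{id}$ through $\phi_t$. Then $\xi_t := \phi_t^* \xi_{std}$ is a loop of tight contact structures based at $\xi_{std}$. By Theorem~\ref{thm:lens}, the isotopy class of $\xi_{std}$ is rigidly encoded by its decorated minimal Farey path, and one shows parametrically that a generic family $\xi_t$ can be homotoped rel endpoints to a family all of whose members agree with $\xi_{std}$ up to ambient isotopy. A parametric Gray stability argument then upgrades the resulting smooth isotopy to a contact isotopy from $\phi$ to the identity. Carrying this out rigorously is the parametric analogue of Honda's classification and requires tracking how convex Heegaard tori and their dividing sets evolve through a generic one-parameter family.

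For the image, I would use convex surface theory to arrange, for each candidate smooth mapping class $[\tau]$, a convex Heegaard torus $T$ preserved by $\tau$ up to isotopy. Cutting along $T$ gives two standard contact solid tori, and $\tau$ acts on the pair; $[\tau]$ lifts to a contact mapping class precisely when this action preserves the decorated path encoding $\xi_{std}$ and preserves coorientation. Case-by-case:
\begin{itemize}
\item $q \equiv -1 \modp p$ with $p > 2$: the involution $\tau$ swaps the two Heegaard solid tori. Since $L(p, q) \cong L(p, q)$ via $q \mapsto q^{-1} = q$, and $\xi_{std}$ restricts to the same standard tight structure on each side with the same decorated edge signs, the swap is realized by a contactomorphism.
\item $q \not\equiv \pm 1 \modp p$ with $q^2 \equiv 1 \modp p$: the involution $\tau$ comes from the identification $L(p, q) \cong L(p, q^{-1})$ with $q^{-1} \equiv q$. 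A careful Farey-graph analysis shows the induced action preserves the decorated path of $\xi_{std}$.
\item $q \equiv 1 \modp p$ (so $q = 1$ effectively): any orientation-preserving generator of $\pi_0(\Diff^+)$ acts as an orientation-reversing involution on the cores of the Heegaard solid tori, forcing a reversal of coorientation of $\xi_{std}$. Thus no coorientation-preserving contactomorphism represents $[\tau]$, and the contact MCG is trivial.
\item $q^2 \not\equiv 1 \modp p$: the smooth MCG is already trivial, so by injectivity the contact MCG is trivial.
\end{itemize}

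The main obstacle is the parametric step in proving injectivity, which requires more than the usual isotopy-class classification of Theorem~\ref{thm:lens}: one needs that the space of tight contact structures isotopic to $\xi_{std}$ is connected, essentially a $\pi_1$ statement about the space of tight structures. A secondary subtlety is correctly handling the ambiguity between $\xi_{std}$ and $-\xi_{std}$ (cf. Remark~\ref{rmk:ambiguity}) so that case distinctions based on whether the candidate involution preserves coorientation are made unambiguously; in particular, ensuring that the swapping involution in the first case does not inadvertently act as the contactomorphism $\tau$ from Corollary~\ref{cor:signEquiv} rather than giving a genuinely new mapping class.
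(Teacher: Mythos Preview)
The paper does not prove this theorem at all: Theorem~\ref{thm:cmcg} is stated with attribution to Min~\cite{Min:cmcg} and is used as a black-box input to derive Corollary~\ref{cor:signEquiv}. There is therefore no proof in the paper to compare your proposal against.

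As a standalone sketch, your strategy of analyzing the forgetful map $\iota\colon \pi_0(\Cont)\to\pi_0(\Diff^+)$ and appealing to the Bonahon/Hodgson--Rubinstein computation of the smooth mapping class group is the natural one, and indeed is the approach taken in \cite{Min:cmcg}. You correctly identify the main difficulty: injectivity of $\iota$ is genuinely a parametric statement (contractibility, or at least simple-connectedness, of the relevant component of the space of tight contact structures), and it does not follow from the isotopy classification of Theorem~\ref{thm:lens} alone. Your proposal does not actually carry this step out; ``a parametric Gray stability argument'' is not a proof but a hope, and making it work requires either a direct analysis of the homotopy type of $\Cont(L(p,q),\xi_{std})$ or the machinery of \cite{Min:cmcg}. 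Your image computation is also only a sketch: in particular, the assertion in the $q\equiv 1$ case that the smooth involution must reverse coorientation needs justification beyond ``forcing a reversal,'' since the decorated path for $\xi_{std}$ on $L(p,1)$ has a single edge and no sign data to obstruct anything. In short, the outline is correct but the two substantive steps (injectivity and the coorientation analysis) are precisely where the content of \cite{Min:cmcg} lies, and your proposal does not supply them.
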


The following is a direct corollary of Theorem~\ref{thm:cmcg}.

\begin{corollary}\label{cor:signEquiv} Let $\xi_{std}$ be a standard contact structure on $L(p^2,pq-1)$.
  \begin{enumerate}
    \item The identity map  
    \[
      id\colon (L(p^2,pq-1), \xi_{std}) \to (L(p^2,pq-1), \xi_{std})
    \] 
    is a unique coorientation preserving contactomorphism up to contact isotopy.
    \item There exists a unique coorientation reversing contactomorphism 
    \[
      \tau\colon (L(p^2,pq-1), \xi_{std}) \to (L(p^2,pq-1), -\xi_{std})
    \] 
    up to contact isotopy. The diffeomorphism $\tau$ is defined on each of the Heegaard tori $S^1\times D^2$ as $(\theta, z)\mapsto (-\theta, \overline{z})$ where we think of $D^2$ as the unit disk in $\C$.
  \end{enumerate}
\end{corollary}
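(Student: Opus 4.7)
The plan is to derive both parts of the corollary as immediate consequences of Theorem~\ref{thm:cmcg}, applied to $L(p^2, pq-1)$ viewed as $L(P, Q)$ with $P = p^2$ and $Q = pq-1$. For part (1), I would verify that neither exceptional case giving a $\mathbb{Z}_2$ mapping class group is satisfied, leaving only the trivial group. The first case requires $Q \equiv -1 \pmod P$, i.e.\ $p \mid q$, which fails for $p \geq 2$ since $\gcd(p,q) = 1$ is implicit in our setup of $B_{p,q}$. For the second case, a direct expansion gives $Q^2 = (pq-1)^2 \equiv 1 - 2pq \pmod{p^2}$, so $Q^2 \equiv 1 \pmod P$ iff $p \mid 2q$. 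When $p$ is odd and coprime to $q$ this forces $p \mid q$, contradicting coprimality; when $p = 2$ with $q$ odd one instead has $Q = 2q - 1 \equiv 1 \pmod 4$, so $Q \equiv +1 \pmod P$, failing the subsidiary condition $Q \not\equiv \pm 1$. For even $p=2k$ with $k>1$ and $\gcd(p,q)=1$, the condition $p\mid 2q$ reduces to $k\mid q$, again ruled out by coprimality. Thus the contact mapping class group is trivial, proving (1).

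For existence in part (2), I would show the map $\tau$ defined by $(\theta, z) \mapsto (-\theta, \overline{z})$ on each Heegaard solid torus descends to a coorientation-reversing contactomorphism of $L(p^2,pq-1)$. On the Heegaard torus $\tau$ acts as $-I$ in the natural $(\theta,\phi)$-basis, and $-I$ commutes with every Heegaard gluing matrix, so $\tau$ is well-defined independent of the choice of Heegaard gluing. It is orientation preserving since the reversals on $\theta$ and $z$ cancel. To check that it reverses the coorientation of $\xi_{std}$, I would lift to $S^3 \subset \mathbb{C}^2$: the standard contact form $\alpha = r_1^2\,d\theta_1 + r_2^2\,d\theta_2$ pulls back to $-\alpha$ under complex conjugation $(z_1,z_2)\mapsto(\overline{z_1},\overline{z_2})$, which in turn commutes with the $\mathbb{Z}_{p^2}$-action defining the lens space and hence descends.

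Uniqueness in part (2) follows formally from part (1): if $\tau'$ is any other coorientation-reversing contactomorphism, then $\tau^{-1}\circ\tau'$ preserves coorientation, so by (1) it is contact isotopic to the identity, giving $\tau' \simeq \tau$. The only step that requires genuine care is the case analysis in part (1) -- particularly handling $p = 2$ separately from odd $p$ -- but even this is routine modular arithmetic rather than a substantive obstacle; everything else is a direct invocation of Theorem~\ref{thm:cmcg} or a formal group-theoretic consequence.
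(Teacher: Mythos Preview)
Your proposal is correct and follows essentially the same approach as the paper: both argue part~(1) by verifying that neither exceptional case of Theorem~\ref{thm:cmcg} applies to $L(p^2,pq-1)$ (with the $p=2$ case handled separately), and both deduce uniqueness in part~(2) formally from part~(1) by composing two coorientation-reversing maps. Your write-up is slightly more detailed than the paper's---you spell out the modular arithmetic case-by-case and verify the properties of $\tau$ explicitly via the lift to $S^3$, whereas the paper simply asserts the arithmetic and cites the existence of $\tau$ as well known---but the underlying argument is the same.
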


\begin{proof}
  Suppose $|p|>2$. Then, $(pq-1)^2 \not\equiv 1 \pmod {p^2}$, so $\pi_0(\Cont(L(p^2,pq-1),\xi_{std})) = 1$. If $|p|=2$, then for any odd number $q$, $L(p^2,pq-1) \cong L(4,1)$ and $\pi_0(\Cont(L(4,1),\xi_{std})) = 1$. This completes the proof of the first statement.
  
  It is well known ({\it c.f. \cite{GO:lens, Min:cmcg}}) that for any standard contact structure on a lens space, there exists a coorientation reversing contactomorphism $\tau$. Suppose there is another coorientation reversing contactomorphism $\tau'$. Then $\tau \circ (\tau')^{-1}$ is a coorientation preserving contactomorphism, and by the above argument, it is contact isotopic to the identity. Therefore, $\tau$ is contact isotopic to $\tau'$ and this completes the proof of the second statement. 
\end{proof}

%%%%%%%%%%%%%%%%%%%%%%%%%%%%%%%%%%%%%%%%%%%%%%%%%%%%%%%%%%%%%%%%%%%%%%%%%%%%%%%%%%%%%%%%%%%%%%%%%%%%%%%%%%%%%
\section{Handlebody construction of closed symplectic \texorpdfstring{$4$}{4}-manifolds}\label{sec:geometry}
%%%%%%%%%%%%%%%%%%%%%%%%%%%%%%%%%%%%%%%%%%%%%%%%%%%%%%%%%%%%%%%%%%%%%%%%%%%%%%%%%%%%%%%%%%%%%%%%%%%%%%%%%%%%%

In this section, we will construct closed symplectic $4$-manifolds with $b_2=1$ into which we can embed three of the $B_{p,q}$, and will show that they admit symplectic handlebody decompositions. In Section~\ref{subsec:cap}, we slightly modify Theorem~\ref{thm:concavehandle} to create a symplectic cap and in Section~\ref{subsec:nonloose}, we study certain surgeries on torus knots in some contact lens spaces, which enable us to understand what contact structures we constructed a cap for in Theorem~\ref{thm:cap}. Finally, in Section~\ref{subsec:pants}, we construct closed symplectic $4$-manifolds for each Markov triple. 

%-------------------------------------------------------------------
\subsection{Construction of a small symplectic cap} \label{subsec:cap}
%-------------------------------------------------------------------
Let $(\mathcal{B},\pi)$ be a (rational) open book decomposition supporting $(M,\xi_{\mathcal{B}})$. Suppose $\mathcal{B} = (K_1,\ldots,K_m)$ is a (reference framed) link, and $\mathbf{n} = (n_1,\ldots,n_m)$ is a set of integers that are greater than the page slopes of each binding component. Let $(\overline{\mathcal{B}},\overline{\pi})$ be the mirror of $(\mathcal{B},\pi)$ supporting $(-M,\xi_{\overline{\mathcal{B}}})$. Here by the mirror of $\mathcal{B}$, we just mean $\mathcal{B}$ thought of as sitting in $M$ with its reversed orientation and $\overline{\pi}$ is the obvious projection $(-M\setminus \overline{\mathcal{B}})\to S^1$.  (Note we are not reversing the orientation on $\mathcal{B}$.) We denote the result of admissible transverse $-n_i$-surgeries on every binding component by $(-M_{\overline{\mathcal{B}}}(-\mathbf{n}),\xi_{\overline{\mathcal{B}}}(-\mathbf{n}))$. Also recall that when we say $(C,\omega)$ is a symplectic cap for $(M,\xi)$, the orientation of $M$ is opposite of the usual boundary orientation of $C$.

\begin{theorem}\label{thm:cap}
  With the notations above, the cobordism $W$ from $M$ to $M_{\mathcal{B}}(\mathbf{n})$ obtained by attaching $n_i$-framed $2$-handles to $[0,1]\times M$ along on every $\{1\}\times K_i$ admits a symplectic structure $\omega$ that gives a strong symplectic cap for $(M,\xi_{\mathcal{B}}) \sqcup (-M_{\overline{\mathcal{B}}}(-\mathbf{n}),\xi_{\overline{\mathcal{B}}}(-\mathbf{n}))$.  
\end{theorem}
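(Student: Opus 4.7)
The plan is to apply Gay's construction of convex-concave handles (Theorem~\ref{thm:concavehandle}) to a symplectization of $(M,\xi_\mathcal{B})$ and then identify the resulting upper concave boundary using the rational open book calculus of Proposition~\ref{prop:robd-surgery}.

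First, I would verify that the binding $\mathcal{B}$ is nicely fibered in the sense of Definition~\ref{def:nice}. Near each binding component $K_i$, Lemma~\ref{lem:coordinates} supplies polar coordinates $(r,\theta,\phi)$ together with the explicit formulas $\alpha = (Ar^2+B)^{-1}(r^2 d\theta + d\phi)$ and $\pi = C\theta+D\phi$. One checks that the Reeb vector field $R_\alpha$ (or a convenient multiple of it) works as the contact vector field $X$: it is a contact vector field since $\mathcal{L}_{R_\alpha}\alpha = 0$; it is transverse to the pages because $(\mathcal{B},\pi)$ is Giroux-compatible with $\xi_\mathcal{B}$; in the coordinates above it has no $\partial_r$ component and is translation-invariant in $\theta$ and $\phi$; and the characteristic foliation of a page near $K_i$ points toward $K_i$ because each $K_i$ is a binding component. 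This puts us in position to apply Theorem~\ref{thm:concavehandle}.

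Next, take a collar $([-\epsilon,0]\times M, d(e^t\alpha))$ whose upper boundary is $\{0\}\times M$ realized as a convex contact-type hypersurface carrying $\xi_\mathcal{B}$. By Gay's construction, we attach an $n_i$-framed convex-concave 2-handle along each $K_i$ in this convex boundary; the hypothesis $n_i>$ page slope of $K_i$ is exactly what is required. The upshot is a compact symplectic cobordism $W$ with two concave boundary components: the lower one is $(M,\xi_\mathcal{B})$ itself, and the upper one $(Y,\eta)$ is smoothly $M$ with $n_i$-framed $2$-handles attached along $K_i$. Equipping this latter boundary with the concave (reversed) orientation identifies it as the $3$-manifold $-M_{\overline{\mathcal{B}}}(-\mathbf{n})$, since flipping orientation flips the sign of every surgery coefficient and converts $\mathcal{B}$ into $\overline{\mathcal{B}}$.

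The remaining step is to check that the contact structure $\eta$ produced by Gay's construction agrees with $\xi_{\overline{\mathcal{B}}}(-\mathbf{n})$, and this is the main technical point. The idea is that, after reversing orientation, the local effect of a single $n_i$-framed convex-concave $2$-handle attachment on the new concave boundary is precisely admissible transverse $(-n_i)$-surgery on the binding component $\overline{K}_i \subset (-M,\xi_{\overline{\mathcal{B}}})$. Because reversing orientation negates the page slope, the inequality $n_i>$ page slope of $K_i$ in $M$ becomes $-n_i<$ page slope of $\overline{K}_i$ in $-M$, so Proposition~\ref{prop:robd-surgery} applies and the resulting contact structure is supported by the surgery dual $(\overline{\mathcal{B}}^\ast,\overline{\pi})$; this is by definition $\xi_{\overline{\mathcal{B}}}(-\mathbf{n})$. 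The local comparison can be done binding component by binding component thanks to the nicely fibered structure, which makes Gay's model coincide, in the coordinates from Lemma~\ref{lem:coordinates}, with the model used to define admissible transverse surgery. The hard part of the argument is precisely this local identification, since it requires matching Gay's handle (built from two different Liouville vector fields $X_\pm$) against the surgery picture while keeping careful track of orientations and the framing conventions (smooth vs.\ page vs.\ polar-coordinate).
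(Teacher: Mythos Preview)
Your proof follows the same route as the paper: verify that the binding is nicely fibered by taking the Reeb field as the contact vector field in Definition~\ref{def:nice}, apply Gay's Theorem~\ref{thm:concavehandle} to a piece of the symplectization, and then identify the new concave boundary via Proposition~\ref{prop:robd-surgery}.  The one substantive difference is in the last step.  You frame the identification of $\eta$ with $\xi_{\overline{\mathcal{B}}}(-\mathbf{n})$ as a local model-matching problem (Gay's handle versus the admissible transverse surgery model) and flag it as ``the hard part'' still to be carried out.  The paper does not redo this computation; it simply invokes the proof of \cite[Theorem~1.2]{Gay:handle} together with \cite[Addendum~5.1]{Gay:handle}, which already establish that the contact structure on the new concave boundary is supported by the surgery dual open book $\overline{\mathcal{B}^*}$.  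Once you know that, Proposition~\ref{prop:robd-surgery} finishes the argument immediately.  So your outline is correct, but you can replace the promised local calculation with a direct citation to Gay's Addendum~5.1.
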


\begin{proof}
  We will first show that $\mathcal{B}$ is a nicely fibered link in $(M,\xi_{\mathcal{B}})$. First, take a contact form $\alpha$ of $\xi_{\mathcal{B}}$ and polar coordinates $(r,\theta,\phi)$ near each binding component from Lemma~\ref{lem:coordinates}. Then we choose the Reeb vector field $R_{\alpha}$ as our contact vector field in Definition~\ref{def:nice} and directly calculate it to be
  \[
    R_{\alpha} = A\frac{\bd}{\bd\theta} + B\frac{\bd}{\bd\phi}.
  \]
  Since $d\alpha$ is a volume form of each page and $i_{R_\alpha}d\alpha = 0$, clearly $R_{\alpha}$ is transverse to each page, which verifies the first bullet point of Definition~\ref{def:nice}. Also it is straightforward to verify the statements in the second bullet point. For the third bullet point, as in the proof of Lemma~\ref{lem:coordinates}, the characteristic foliation of a page is defined by the Liouville vector field so it points in towards the binding components.   

  Thus we can apply Theorem~\ref{thm:concavehandle} to $([0,1] \times M, d(e^t\alpha))$, a piece of the symplectization of $(M,\xi)$, to obtain a symplectic cap with two boundary components. We are now left to show that the contact manifold obtained on the upper boundary of the handle attachment is $(-M_{\overline{\mathcal{B}}}(-\mathbf{n}),\xi_{\overline{\mathcal{B}}}(-\mathbf{n}))$. Clearly the handle attachment gives a cobordism from $M$ to $M_{\mathcal{B}}(\mathbf{n})$. Since we consider concave boundaries, we reverse the orientation of $M_{\mathcal{B}}(\mathbf{n})$ and obtain $-M_{\overline{\mathcal{B}}}(-\mathbf{n})$. Let $\mathcal{B}^*$ be the surgery dual link of $\mathcal{B}$. The proof of \cite[Theorem~1.2]{Gay:handle} and \cite[Addendum 5.1]{Gay:handle} implies that the resulting contact structure is supported by $\overline{\mathcal{B}^*}$. According to the uniqueness of a contact structure supported by a rational open book decomposition (\cite[Theorem~1.7]{BEV:robd}) and Proposition~\ref{prop:robd-surgery}, it is contactomorphic to $\xi_{\overline{\mathcal{B}}}(-\mathbf{n})$.
  \end{proof}

%----------------------------------------------------------------------------
\subsection{Non-loose torus knots in lens spaces}\label{subsec:nonloose}
%----------------------------------------------------------------------------
To utilize Theorem~\ref{thm:cap}, we need to identify the resulting contact structure on $-M_{\overline{\mathcal{B}}}(-\mathbf{n})$, the manifold obtained by admissible transverse surgery along the binding of a (rational) open book decomposition. In general, it is not easy to describe this new contact structure in a more standard form, especially when the contact structure $\xi_{\overline{\mathcal{B}}}$ is overtwisted (which it frequently will be if $\xi_{\mathcal{B}}$ is tight). The main result of this section is Theorem~\ref{thm:nonloose-surgery}, which makes such an identification in a special case.

We first characterize the contact structures on a lens space supported by a torus knot. Let $L(r,s)$ be a lens space for relatively prime integers $r,s$. Recall from Section~\ref{subsec:torus} that a torus knot $T_{p,q}$ in $L(r,s)$ is a \dfn{positive} torus knot if $q/p$ is clockwise of $0$ and counterclockwise of $-r/s$ in the Farey graph, and a \dfn{negative} torus knot if $q/p$ is clockwise of $-r/s$ and counterclockwise of $0$ in the Farey graph. The following proposition is straightforward from \cite[Theorem~1.8]{BEV:robd}.

\begin{proposition}\label{prop:postive=univ}
  Let $T_{p,q}$ be a positive torus knot in $L(r,s)$ and $\xi_{p,q}$ be the contact structure on $L(r,s)$ supported by $T_{p,q}$. Then $\xi_{p,q}$ is a standard contact structure on $L(r,s)$. 
\end{proposition}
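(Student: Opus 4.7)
The plan is to translate the compatibility datum of the open book supported by $T_{p,q}$ into a decorated-path description on the Farey graph, and then recognize that description as a standard contact structure via Theorem~\ref{thm:lens}(2).

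First I would invoke Lemma~\ref{lem:fibered} to conclude that $T_{p,q}$ is rationally fibered, with page slope strictly less than the torus framing (since the knot is positive). A rational Seifert page can be built explicitly by writing $L(r,s) = S_1 \cup_T S_2$ along the Heegaard torus $T$ containing $T_{p,q}$ and resolving the intersections of some collection of meridional disks of $S_1$ and $S_2$ with $T_{p,q}$. The positivity hypothesis, which places $q/p$ clockwise of $0$ and counterclockwise of $-r/s$ in the Farey graph, forces all such resolutions to be of the same sign. This uniform sign is the geometric fact that will drive the identification of the contact structure.

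Next, using Lemma~\ref{lem:binding}, I would shrink a standard neighborhood of the binding $T_{p,q}$ so that its boundary has slope arbitrarily close to the page slope. The complement is then a union of two solid tori of the form $S_{-r/s}$ and $S^0$ glued along a convex torus, each carrying a universally tight contact structure admitting a decorated-path description in the Farey graph as in Section~\ref{subsec:lens}. The substantive input is \cite[Theorem~1.8]{BEV:robd}, which identifies the two resulting decorated paths as the minimal Farey arcs from $-r/s$ to the page slope and from the page slope to $0$ respectively, with all interior edges carrying the same sign (reflecting the uniform sign of the resolutions constructed above). Since $q/p$ lies on the minimal path from $-r/s$ to $0$, concatenating gives a minimal decorated path from $-r/s$ to $0$ whose interior edges all carry a single sign; by Theorem~\ref{thm:lens}(2), the supported contact structure $\xi_{p,q}$ is universally tight, hence a standard contact structure on $L(r,s)$.

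The only real obstacle is the sign-matching assertion — namely, that the basic slices on both sides of $T_{p,q}$ are of the same sign — which is the content of the cited theorem and comes from examining how each positive resolution contributes to the monodromy along the Heegaard torus. Everything else is bookkeeping between the supported open book and the Giroux–Honda path data, which is why the proposition is announced as straightforward from \cite[Theorem~1.8]{BEV:robd}.
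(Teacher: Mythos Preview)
The paper gives no proof beyond the sentence preceding the proposition: it simply declares the result ``straightforward from \cite[Theorem~1.8]{BEV:robd}.'' You correctly identify that citation as the substantive input and then attempt to unpack it via the decorated-path language of Section~\ref{subsec:lens}, along lines parallel to the paper's later argument for negative torus knots (Proposition~\ref{prop:negative=ot}). So at the level of approach you and the paper agree.

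Your elaboration is broadly on the right track but has a couple of slips worth flagging. First, when you write ``the complement is then a union of two solid tori of the form $S_{-r/s}$ and $S^0$,'' you mean the lens space itself, not the binding complement (which is a Seifert piece with three torus boundary components). Second, the vertices in your concatenated Farey path should be $-r/s$, $q/p$, and $0$; the \emph{page slope} is a framing of $T_{p,q}$ measured on $\partial N$, not a point on the Heegaard-torus Farey graph, so it does not belong in that path. Finally, there is a genuine subtlety you pass over: for a \emph{positive} torus knot the torus framing exceeds the page slope (Lemma~\ref{lem:fibered}), so one cannot, as in the negative case, take the binding neighborhood to have boundary slope equal to the torus framing and thereby immediately realize a convex Heegaard torus with dividing slope $q/p$. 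Establishing that the Heegaard torus can nonetheless be made convex with consistent signs on both solid tori is exactly what \cite[Theorem~1.8]{BEV:robd} supplies, so your sketch is correct to defer to it at that point.
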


Next we consider negative torus knots. To do so, we require a few preliminary results. 

\begin{definition}\label{def:nonloose}
  Suppose $(M,\xi)$ is an overtwisted contact $3$-manifold. A Legendrian knot $L \subset (M,\xi)$ is called \dfn{non-loose} if $(M \setminus N, \xi|_{M \setminus N})$ is tight where $N$ is a standard neighborhood of $L$. Similarly,  a transverse knot $K \subset (M,\xi)$ is called \dfn{non-loose} if $(M \setminus K, \xi|_{M\setminus K})$ is tight.   
\end{definition}

In \cite{EMM:nonloose}, non-loose torus knots in any contact structure on $S^3$ were classified. We can adapt the same method to describe the contact structure on a lens space supported by a negative torus knot in terms of decorated paths in the Farey graph.

Let $P$ be a decorated path in the Farey graph for $(L(r,s),\xi)$. There are two important decorated paths we need to consider: \dfn{consistent paths} and \dfn{totally inconsistent paths}. A \dfn{consistent path} $P$ is a decorated path $\{s_0,\ldots,s_n\}$ where the signs of all edges are identical except for the first and the last ones. The signs of the two edges $(s_0,s_1)$ and $(s_{n-1},s_n)$ are $\circ$. A \dfn{totally inconsistent path at $p/q$} is a decorated path $P$ where all signs of the edges $(s,s')$ clockwise of $p/q$ are positive (resp.\ negative) and all signs of the edges $(s,s')$ counterclockwise of $p/q$ are negative (resp.\ positive) except for the first and last ones. The signs of the two edges are $\circ$. See Figure~\ref{fig:decoratedFarey} for a totally inconsistent path at $-8/5$.  

\begin{figure}[htbp]{\scriptsize
  \vspace{0.2cm}
  \begin{overpic}%[grid,tics=10] 
  {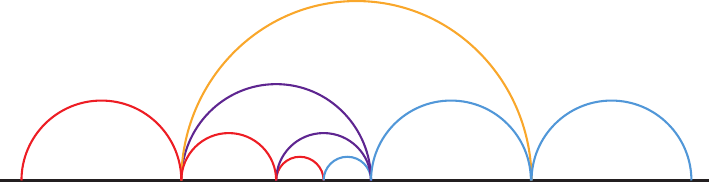}
       \put(45, 16){\Large \color{nred} $\circ$}
     \put(105, 8){\Large \color{nred} $+$}
     \put(140, 5){\large \color{nred} $+$}
       \put(291, 16){\Large \color{nblue} $\circ$}
     \put(210, 14){\Large \color{nblue} $-$}
     \put(163, 5){\large \color{nblue} $-$}
   \put(3, -7){$-3$}
   \put(80, -7){$-2$}
   \put(125, -7){$-\frac{5}{3}$}
   \put(148, -7){$-\frac{8}{5}$}
   \put(171, -7){$-\frac{3}{2}$}
   \put(249, -7){$-1$}
   \put(331, -7){$0$}
  \end{overpic}}
  \vspace{0.2cm}
  \caption{A decorated path in the Farey graph for the contact structure on $L(3,1)$ supported by the rational open book decomposition $(T_{8,-5},\pi)$.}
  \label{fig:decoratedFarey}
\end{figure}

Let $T_{p,q}$ be a torus knot in $L(r,s)$. Recall from Section~\ref{subsec:torus} that $T_{p,q}$ is a simple closed curve on a Heegaard torus of $L(r,s)$. Thus there is a framing for $T_{p,q}$ induced from the Heegaard torus. We call it the \dfn{torus framing} of $T_{p,q}$. Also recall from Section~\ref{subsec:torus} that $T_{p,q}$ is \dfn{trivial} if $|pr+qs| = 1$ or $|p| = 1$, and \dfn{nontrivial} otherwise. 

\begin{proposition}\label{prop:negative=ot}
  Suppose $\xi_{p,q}$ is the contact structure on $L(r,s)$ supported by a negative torus knot $T_{p,q}$. Then it is overtwisted if $T_{p,q}$ is nontrivial. Further,  $\xi_{p,q}$ may be described by a totally inconsistent path for $L(r,s)$ at $q/p$. 
\end{proposition}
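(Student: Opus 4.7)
My plan adapts the strategy of \cite{EMM:nonloose}, where non-loose torus knots in $S^3$ are classified, to the lens space setting. I will describe $\xi_{p,q}$ via a decorated path from $-r/s$ to $0$ in the Farey graph and show that this path is totally inconsistent at $q/p$.

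To set up, I use Lemma~\ref{lem:binding}(1) to take a standard neighborhood $N$ of the transverse binding $T_{p,q}$ with boundary dividing slope arbitrarily close to the page slope $\pi$. Lemma~\ref{lem:binding}(2) tells me that $L(r,s)\setminus T_{p,q}$ is universally tight and free of Giroux torsion, so convex surface theory applies cleanly. I then isotope the Heegaard torus $T\supset T_{p,q}$, away from the binding, into two convex tori $T_1\subset S_1$ and $T_2\subset S_2$ parallel to $T$. In the absence of Giroux torsion, $T_1$ and $T_2$ share a common dividing slope $s^\ast$.

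The main technical step is to pin down $s^\ast$ and the signs of the basic slices on each side. For $s^\ast$, I appeal to the local open book model of Lemma~\ref{lem:coordinates}: the fibers of $\pi$ meet each $T_i$ in curves parallel to $T_{p,q}$, and the Reeb vector field $R_\alpha$ (the contact vector field chosen in the proof of Theorem~\ref{thm:cap}) projects to $T_i$ with slope $q/p$, forcing the dividing curves on $T_i$ to have slope $q/p$. For the signs, I compare how $R_\alpha$ pierces $T_i$ from $S_1$ versus from $S_2$: the outward coorientations of $T$ seen from $S_1$ and $S_2$ are opposite, while $R_\alpha$ and the open book data depend only on the binding, so the basic slice immediately adjacent to $q/p$ in the path toward $-r/s$ carries the opposite sign from the one immediately adjacent to $q/p$ in the path toward $0$. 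Using universal tightness of the complement of the binding, each $T^2\times I$ region on one side of $T$ is itself universally tight, so all basic slices on a given side share the same sign; combined with the sign flip at $q/p$, this produces a decorated path in which all edges counterclockwise of $q/p$ share one sign and all edges clockwise of $q/p$ share the opposite sign, modulo the two endpoint $\circ$-edges coming from the Honda model on the Heegaard solid tori. This is exactly a totally inconsistent path at $q/p$, proving the second statement.

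For the overtwistedness when $T_{p,q}$ is nontrivial, observe that $|pr+qs|\neq 1$ and $|q|\neq 1$, so $q/p$ is not Farey-adjacent to either $-r/s$ or $0$; in particular each side of $q/p$ in the decorated path contains a genuine basic slice. I would then refine the path by inserting convex tori of dividing slopes $(q/p)^a$ and $(q/p)^c$ immediately on either side of $q/p$, which is possible because on each side the $T^2\times I$ between $q/p$ and the next vertex of the path admits a subdivision through $(q/p)^a$ (respectively $(q/p)^c$) in the tight, Giroux torsion-free setting. In the refined path the three consecutive vertices $(q/p)^a, q/p, (q/p)^c$ have flanking edges of opposite signs by total inconsistency, and $(q/p)^a$ and $(q/p)^c$ are themselves Farey-adjacent (as recorded in Section~\ref{subsec:lens}), so these two adjacent edges can be shortened across $q/p$. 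Theorem~\ref{thm:lens}(4) then gives overtwistedness of $\xi_{p,q}$. The main obstacle I expect is the sign analysis of the basic slices on each side of $T$: tracking the coorientation of the Heegaard torus against the direction of the Reeb-type contact vector field near $T_{p,q}$ is precisely the step that distinguishes the negative torus knot case from the positive one, for which Proposition~\ref{prop:postive=univ} gives a universally tight standard contact structure instead.
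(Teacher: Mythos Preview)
Your overall architecture matches the paper's: split the lens space along the Heegaard torus with dividing slope $q/p$, use universal tightness of the binding complement to force each side's sub-path to carry a single sign, and then invoke Theorem~\ref{thm:lens}(4) for overtwistedness. The argument that $T_{p,q}$ nontrivial allows shortening across $q/p$ is also the same.

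The genuine gap is your determination of the signs. You argue that because the outward coorientations of $T$ as seen from $S_1$ and $S_2$ are opposite while $R_\alpha$ is fixed, the adjacent basic slices on either side of $q/p$ must carry opposite signs. This cannot be the mechanism: for the \emph{standard} universally tight contact structure on $L(r,s)$, the Heegaard torus has the very same coorientation flip, yet every basic slice in the path carries the same sign. So the coorientation observation does not by itself produce a sign change at $q/p$, and you give no further ingredient that distinguishes the negative case. The paper settles this point indirectly. Having established that each side carries a single sign, there are only two possibilities: totally consistent or totally inconsistent at $q/p$. The consistent case is ruled out by combining two facts: \cite[Lemma~3.16]{EMM:nonloose} (whose proof only uses a thickened torus neighborhood of the knot and so works verbatim in lens spaces) shows that in the consistent case, adding Giroux torsion to the binding complement produces an overtwisted structure; but Lemma~\ref{lem:binding}(2) says the binding complement remains tight after adding Giroux torsion. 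This contradiction forces total inconsistency. This Giroux torsion dichotomy is the key idea you are missing.

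A secondary issue is your argument for $s^\ast=q/p$. Lemma~\ref{lem:coordinates} only gives the contact form and projection in polar coordinates \emph{near the binding}, and it is not clear what ``$R_\alpha$ projects to $T_i$ with slope $q/p$'' means once $T_i$ is pushed away from $T_{p,q}$, nor why that would pin down the dividing slope. The paper's route is cleaner and uses the negativity hypothesis explicitly: since the torus framing is less than the page slope (Lemma~\ref{lem:fibered}), one may take the standard neighborhood $S_a$ with $a$ equal to the torus framing, so that $S_a$ is a standard neighborhood of a Legendrian approximation $L$ whose contact framing equals the torus framing; placing $L$ on $T$ and perturbing $T$ to be convex then forces the dividing slope to be $q/p$.
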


\begin{proof}
  Let $N$ be a neighborhood of $T_{p,q}$ and define $C = L(r,s) \setminus N$. First we consider $N$ as a standard neighborhood $S_a$ of $T_{p,q}$ in $\xi_{p,q}$ for some $a \in \mathbb{Q}$ and let $\xi_C$ be the restriction of $\xi_{p,q}$ to $C$. By Lemma~\ref{lem:binding}, $\xi_C$ is universally tight and we may assume that $a \in \mathbb{Q}$ is any slope less than the page slope. The torus knot $T_{p,q}$ sits on a Heegaard torus $T$ of $L(r,s)$, and since $T_{p,q}$ is a negative torus knot, Lemma~\ref{lem:fibered} tells us that the torus framing is less than the page slope so we can assume $a$ is the slope corresponding to the torus framing. Now $N$ is a neighborhood of a Legendrian knot $L$ that is a Legendrian approximation of $T_{p,q}$, see \cite[Section~2]{EtnyreHonda01}, and its contact framing  agrees with the torus framing. We can smoothly perturb $T$ so that $L$ lies on $T$ and perturb $T$ again while fixing $L$ so that it becomes convex with dividing slope $q/p$. Notice that $L$ becomes a Legendrian divide of $T$, 
  
  which is the trace of singular points of the characteristic foliation of $T$. See Figure \ref{fig:torus} for example.  Now $T$ splits $L(r,s)$ into two solid tori $S_1$ and $S_2$ and hence the path describing the contact structure $\xi_{p,q}$ into two paths $P_1$ and $P_2$. Since the core of both $S_1$ and $S_2$ are homotopically nontrivial in $C$, $S_1$ and $S_2$ unwrap in coverings of $C$. Thus the contact structures restricted to $S_1$ and $S_2$ should both be universally tight. This implies that each path only contains a single sign. Now there are two cases to consider. First, both paths $P_1$ and $P_2$  have the same sign. In this case, the decorated path for $L(r,s)$ is a totally consistent path. In \cite[Lemma~3.16]{EMM:nonloose}, it was shown that adding Giroux torsion to $C$ in the totally consistent setting always produces an overtwisted contact structure. (In \cite{EMM:nonloose} only the case of $S^3$ was considered, but the proof only used a thickened torus containing $T_{p,q}$ and so applies to lens spaces as well.)  By Lemma~\ref{lem:binding}, adding Giroux torsion to the complement of $T_{p,q}$ in $\xi_{p,q}$ gives a tight contact manifold. Thus the signs in $P_1$ and $P_2$ must be opposite and the path describing $\xi_{p,q}$ is totally inconsistent at $p/q$. 
  
  Since $T_{p,q}$ is nontrivial, there is an edge between $(q/p)^a$ and $(q/p)^c$ in the Farey graph by \cite[Lemma~2.10]{EMM:nonloose}. Thus the decorated path is not minimal and we can shorten the path by merging two edges $((q/p)^a,q/p)$ and $(q/p,(q/p)^c)$. (See Section~\ref{subsec:lens} for notation.) However, since the two edges had different signs, $\xi_{p,q}$ is overtwisted by Theorem~\ref{thm:lens}. 
\end{proof}

Now we return to the contact manifolds obtained by surgery on torus knots. We first study a certain surgery on $T^2 \times I$, for which we require the following amusing lemma. Before stating the lemma we recall that Dehn filling a $3$-manifold $Y$ with a torus $T$ in its boundary is the result of gluing a solid torus to $Y$ along $T$. The filling is determined by the curve on $T$ to which the meridian to the solid torus is sent by the gluing. 

\begin{lemma}\label{lem:whysomanypants}
  Let $P$ be a pair of pants.  Dehn filling any boundary component of $P \times S^1$ along the curve $\{p\} \times S^1$ results in a connected sum of two solid tori, and this homeomorphism sends $\{p\} \times S^1$ to the meridians of the solid tori. 
\end{lemma}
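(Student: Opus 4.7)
The plan is to build an embedded sphere $S$ in the filled manifold $M$ out of the product structure on $P \times S^1$, and then to identify each piece of $M \setminus S$ as a once-punctured solid torus.

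Write $P$ as a disk with two open sub-disks removed, so the three boundary circles of $P$ are $\bd_1, \bd_2$ (around the removed sub-disks) and $\bd_3$ (the outer disk boundary). By the symmetry of $P$ permuting the boundary components, it suffices to treat the case where we Dehn fill $\bd_3 \times S^1$ along the fiber $\{p\} \times S^1$; let $\mathcal{F}$ denote the filling solid torus, whose meridian disks cap off the fibers over points of $\bd_3$. I would then choose a properly embedded arc $\alpha \subset P$ with endpoints $x_1, x_2 \in \bd_3$ that separates the two removed sub-disks, cutting $P$ into two annuli $A_1, A_2$ with $\bd A_i = \bd_i \cup (\alpha \cup \beta_i)$ for a subarc $\beta_i \subset \bd_3$. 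The annulus $\alpha \times S^1 \subset P \times S^1$ has boundary equal to the two fiber circles over $x_1$ and $x_2$, so capping them off by the two corresponding meridian disks of $\mathcal{F}$ produces an embedded sphere $S \subset M$.

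Next I would show that $S$ separates $M$ into two pieces $M_1, M_2$ with $M_i \supset \bd_i \times S^1$, and that each $M_i$ is a solid torus with an open ball removed whose torus-boundary meridian is the fiber. Explicitly, $M_i = (A_i \times S^1) \cup (D^2 \times \beta_i)$, where $D^2 \times \beta_i$ is the half of $\mathcal{F}$ lying over $\beta_i$, glued along the annulus $\bd D^2 \times \beta_i \cong \beta_i \times S^1$ to the outer torus $(\alpha \cup \beta_i) \times S^1$ of $A_i \times S^1 \cong T^2 \times I$. To verify the claimed structure, I would cap $S$ off with a ball $B^3$ to form $M_i' = M_i \cup_S B^3$: then $(D^2 \times \beta_i) \cup B^3$ is two $3$-balls glued along two disjoint boundary disks, hence a solid torus whose meridian on $(\alpha \cup \beta_i) \times S^1$ is a fiber. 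Thus $M_i'$ is $T^2 \times I$ Dehn-filled along its outer boundary with fiber as meridian, and since the fibers on the two boundary tori of $T^2 \times I$ are isotopic through the product structure, $M_i'$ is itself a solid torus with meridian $\{p\} \times S^1$ on $\bd_i \times S^1$. Consequently $M_i = M_i' \setminus B^3$ is a once-punctured solid torus with the desired meridian.

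Finally, since $M = M_1 \cup_S M_2$ is a union of two once-punctured solid tori along their common sphere boundary, $M$ is by definition the connected sum $(S^1 \times D^2) \# (S^1 \times D^2)$, and the meridian of each factor corresponds to the fiber $\{p\} \times S^1$ on the respective boundary torus. The main geometric content is the pair of observations that ``two $3$-balls glued along two disjoint disks'' is a solid torus, and that Dehn-filling one boundary of $T^2 \times I$ along the fiber yields a solid torus with the same fiber class as meridian on the other boundary; once those are in hand, the rest of the argument is bookkeeping.
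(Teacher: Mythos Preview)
Your proof is correct and follows essentially the same approach as the paper: both construct the separating sphere by taking an essential arc $\alpha \subset P$ with endpoints on the filled boundary, forming the annulus $\alpha \times S^1$, and capping its two boundary circles with meridian disks of the filling torus. The paper simply asserts that cutting along this sphere yields two copies of $(D^2 \times S^1) \setminus B^3$, whereas you supply a more explicit verification of this by capping with a ball and tracking the Dehn filling through $T^2 \times I$; this is additional detail rather than a different argument.
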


\begin{proof}
  Let $T$ be the boundary component of $P \times S^1$ that is filled and $\gamma$ be an essential arc on $P \times \{x\}$ such that $\partial\gamma \subset T$ (Here by an ``essential" arc we mean that it is not isotopic into the boundary of $P$). See Figure~\ref{fig:essential}. 
  \begin{figure}[htbp]{\scriptsize
  \vspace{0.2cm}
  \begin{overpic}%[grid,tics=10] 
  {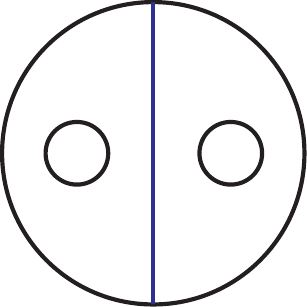}
    \put(60, 22){\large$\gamma$}
  \end{overpic}}
  \vspace{0.3cm}
  \caption{An essential arc on a pair of pants}
  \label{fig:essential}
\end{figure}
  Then $A = \gamma \times S^1$ is an essential annulus in $P \times S^1$, and each boundary component bounds a disk after the Dehn filling. (Here by ``essential" annulus we mean that it is not isotopic to an annulus in the boundary of the $3$-manifold.) The union of the two disks and $A$ is an essential sphere. We cut the manifold along this sphere and separated it into two components. Each component is homeomorphic to $(D^2 \times S^1)\setminus B^3$. To see this, we note that each component $C$ is made up of two pieces. One coming from $P\times S^1$ cut along the annulus is $T^2\times I$ (as it is an annulus times $S^1$), and the other piece coming from the complement of two meridional disks in the Dehn filling torus is $D^2\times I$. Notice that $\partial C$ consists of $T^2$ and $S^2$. If we glue a ball to $S^2$ we will see the result of Dehn filling one boundary component of $T^2\times I$. This is a solid torus. So, our component $C$ is a solid torus with a ball removed.  
\end{proof}

We now consider a similar situation in the contact geometry setting, but first, we recall from Section~\ref{subsec:convexBasic}, that any decorated path in the Farey graph can be used to define a contact structure (possibly overtwisted) on $T^2\times[0,1]$ by stacking basic slices. In addition, that contact structure is tight if and only if it can be consistently shortened to a minimal decorated path. 
\begin{lemma}\label{lem:pants-tori}
  Let $K$ be a slope $0$ curve in $T^2\times \{0\}$ in the contact structure on $T^2\times [-1,1]$ given by the union of basic slices $B_\pm(-1,0) \cup B_\mp(0,\infty)$. There is a non-loose Legendrian representative $L$ of $K$ such that the contact framing is $1$ larger than the torus framing. Moreover, Legendrian surgery on $L$ yields a connected sum of two tight solid tori $S^0(-1) \,\#\, S_0(\infty)$.
\end{lemma}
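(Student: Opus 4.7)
The plan is to break the argument into three parts: verifying that $\xi$ is overtwisted, constructing a non-loose Legendrian representative $L$ of $K$ with the stated framing, and identifying the result of Legendrian surgery on $L$.

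For overtwistedness, I would observe that the decorated path $\{-1, 0, \infty\}$ has two adjacent edges of opposite signs, and since $|-1\bigcdot\infty|=1$ those edges can be shortened in the Farey graph. By the $T^2\times I$ analogue of Theorem~\ref{thm:lens}(4) due to Honda, $\xi$ contains an overtwisted disk; moreover, this disk can be localized near the middle torus $T^2\times\{0\}$.

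To construct $L$, I would realize $\xi$ as a half-Lutz modification of a tight basic slice $B(-1,\infty)$ along a pre-Lagrangian torus of slope $0$; the sign of this Lutz twist reconciles the opposite signs of the two basic slices $B_\pm(-1,0)$ and $B_\mp(0,\infty)$. The pre-Lagrangian torus carries a slope-$0$ Legendrian foliation, and $L$ is taken to be a leaf of this foliation pushed slightly into one side of the Lutz region. The essential property of the Lutz twist is that the contact framing of $L$ rotates by one past the torus framing, yielding $\tw(L,T) = T+1$. To verify $L$ is non-loose, I would excise a standard neighborhood $N(L)$ whose convex boundary has dividing slope $+1$ relative to the torus framing, and argue that the complement is assembled from basic slices of consistent signs together with a collar — all tight pieces — glued without Giroux torsion. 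This convex-surface bookkeeping, in particular tracking the dividing curves on $\partial N(L)$ and checking that no Giroux torsion persists in the complement, will be the main technical obstacle.

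Finally, because $\tw(L,T) = T+1$, Legendrian surgery on $L$ is torus-framing Dehn surgery. Smoothly, $T^2\times[-1,1]\setminus N(L)$ is diffeomorphic to $P\times S^1$ with $P$ a pair of pants and the $S^1$ factor parallel to $L$; applying Lemma~\ref{lem:whysomanypants} with filling slope equal to the $S^1$ direction produces $S^1\times D^2\,\#\,S^1\times D^2$, with a separating sphere isolating $T^2\times\{-1\}$ from $T^2\times\{+1\}$. On each side one obtains a solid torus whose convex boundary has dividing slope $-1$ (respectively $\infty$) and whose meridian has slope $0$. Since $|0\bigcdot(-1)| = |0\bigcdot\infty| = 1$, Kanda's uniqueness of tight contact structures on such solid tori identifies the two summands as $S^0(-1)$ and $S_0(\infty)$, provided the surgered manifold is tight — which is exactly the content of the non-looseness of $L$.
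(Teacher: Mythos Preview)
Your proposal has the right topological picture but contains two genuine gaps that prevent it from going through.

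First, the logical direction is backwards and the key implication you invoke is false. You aim to prove non-looseness of $L$ directly and then write that tightness of the surgered manifold ``is exactly the content of the non-looseness of $L$.'' It is not: non-looseness says only that the complement of a standard neighborhood of $L$ is tight, whereas Legendrian surgery glues a new tight solid torus back in, and gluing two tight pieces along a convex torus can certainly produce an overtwisted result. There are non-loose Legendrian knots whose Legendrian surgery is overtwisted. The paper proceeds in the opposite direction: it proves the surgery is tight first, and then non-looseness follows for free.

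Second, your non-looseness argument rests on a wrong decomposition. You say the complement of $N(L)$ in $T^2\times[-1,1]$ is ``assembled from basic slices of consistent signs together with a collar.'' But topologically this complement is $P\times S^1$ with $P$ a pair of pants, not a stack of $T^2\times I$ layers, so it cannot be written as a concatenation of basic slices. The paper handles this by taking a properly embedded essential annulus $A$ in $P\times S^1$ whose boundary consists of two slope-$0$ curves on $\partial N(L)$, cutting along $A$, and rounding edges to obtain two genuinely $T^2\times I$ pieces that one checks are minimally twisting. The Dehn filling then produces two tight solid tori (with meridional slope $0$ and boundary slopes $-1$ and $\infty$), and the connected sum is formed by removing Darboux balls and gluing along convex spheres. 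This annulus cut is the missing idea.

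A smaller point: your construction of $L$ via a ``half-Lutz modification along a pre-Lagrangian torus'' is nonstandard, and the assertion that the contact framing is exactly one more than the torus framing is not justified. The paper instead decomposes $B_\mp(0,\infty)$ as $B_\mp(0,1)\cup B_\mp(1,\infty)$, recognizes $B_\pm(-1,0)\cup B_\mp(0,1)$ as a length-$2$ balanced continued fraction block with central slope $0$, and cites \cite[Theorem~1.11]{ChakrabortyEtnyreMin20Pre} for the existence (and uniqueness) of a Legendrian $L$ in such a block with contact framing one larger than the torus framing.
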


\begin{proof}
  We first show that the claimed $L$ exists. The union $B_\pm(-1,0) \cup B_\mp(0,1)$ is called a length $2$ balanced continued fraction block with central slope $0$. In \cite[Theorem~1.11]{ChakrabortyEtnyreMin20Pre} it was shown that inside such a thickened torus there is a unique Legendrian knot $L$ isotopic to the slope $0$ curve with framing one larger than the torus framing. Below we will see Legendrian surgery on $L$ yields a tight contact structure and thus $L$ is non-loose.

  Since Legendrian surgery on $L$ is a topological $0$-surgery on $K$, Lemma \ref{lem:whysomanypants} shows that the resulting manifold is a connected sum of two solid tori such that the $0$-slope curves on $T^2\times \{-1,1\}$ become the meridians. It remains to show the contact structure on each component is tight. Let $M = B_\pm(-1,0) \cup B_\mp(0,\infty) \setminus N$ where $N$ is a standard neighborhood of $L$. Take a properly embedded essential annulus $A$ in $M$ where each boundary component of $A$ is a $0$-slope curve on $\bd N$. We cut $M$ along $A$ and edge round, we obtain tight on two copies of $T^2 \times I$. Indeed, the proof of Theorem~1.10 in \cite{ChakrabortyEtnyreMin20Pre} shows that $M$ cut along $A$ is $B(-1,-1)$ and  $B_\mp(1,\infty)$, where $B(-1,-1)$ is an $I$-invariant contact structure on $T^2\times[0,1]$.
  
  Now, recall that Legendrian surgery on $L$ in $B_\pm(-1,0) \cup B_\mp(0,\infty)$ is the result of removing a neighborhood of $L$ and gluing in a solid torus with a tight contact structure. When we remove the neighborhood of $L$ the resulting manifold is a pair-of-pants times $S^1$, and we are Dehn filling one of its boundary components. So smoothly, we are in the situation of Lemma~\ref{lem:whysomanypants}. Thus, we know that the result of Dehn filling $P\times S^1$ is the connected sum of two solid tori formed by Dehn filling the complements of the annulus in $P\times S^1$. Above we saw that the complement of this annulus in our case are the $B(-1,-1)$ and  $B_\mp(1,\infty)$. We are Dehn filling each of these with a solid torus with meridional slope $0$. So according to the classification of contact structures on solid tori dsicussed in Sectxion~\ref{subsec:convexBasic} we see that the result of Legendrian surgery should be $S^0(-1) \,\#\, S_0(\infty)$. 
\end{proof}

Let $T_{p,q}$ be a torus knot in $L(r,s)$ and $\xi_{p,q}$ be the contact structure on $L(r,s)$ supported by $T_{p,q}$. When we perform a surgery on $T_{p,q}$, we call it a \dfn{torus framing surgery} if the surgery coefficient is the torus framing. Recall from Section~\ref{subsec:lens} that $q'/p' = (q/p)^c$ is the largest rational number satisfying $pq' - p'q = -1$ and $q''/p'' = (p/p)^a$ is the smallest rational number satisfying $pq'' - p''q = 1$.

\begin{theorem}\label{thm:nonloose-surgery}
  The torus framing admissible transverse surgery on a nontrivial negative torus knot $T_{p,q}$ in $(L(r,s), \xi_{p,q})$ results in a connected sum of standard contact structures on $L(p,-q)$ and $L(ps+qr,\overline{p}s+\overline{q}r)$ for any integers $\overline{p}$ and $\overline{q}$ satisfying $p\overline{q} - \overline{p}q = -1$. 
\end{theorem}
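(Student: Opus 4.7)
The plan is to convert admissible transverse surgery on $T_{p,q}$ into a Legendrian surgery localized near the Heegaard torus containing $T_{p,q}$, use Lemma~\ref{lem:pants-tori} to realize the effect of this surgery as a contact connected sum, and then identify the two summands both topologically and as contact manifolds.

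By Proposition~\ref{prop:negative=ot}, $\xi_{p,q}$ admits a decorated-path description that is totally inconsistent at the slope of $T_{p,q}$, so the two basic slices flanking this slope carry opposite signs. Since $T_{p,q}$ is nontrivial, Lemma~\ref{lem:fibered} tells us the page slope exceeds the torus framing by more than one, so by Lemma~\ref{lem:binding}(1) we may choose a standard neighborhood $S_a$ of $T_{p,q}$ with $\lfloor a\rfloor$ equal to one more than the torus framing. Proposition~\ref{prop:admissible-Legendrian} then converts torus framing admissible transverse surgery on $T_{p,q}$ into Legendrian surgery on a Legendrian approximation $L \subset S_a$ whose contact framing exceeds the torus framing by exactly one.

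Next I would change basis on the Heegaard torus so that the slope of $T_{p,q}$ becomes $0$ and its two Farey neighbors become $-1$ and $\infty$. In these coordinates a thickened neighborhood of the Heegaard torus containing $T_{p,q}$ carries exactly the contact structure $B_\pm(-1,0)\cup B_\mp(0,\infty)$ of Lemma~\ref{lem:pants-tori}, and $L$ is the distinguished Legendrian knot produced by that lemma. The lemma then replaces this $T^2\times[-1,1]$ with a contact connected sum $S^0(-1)\,\#\,S_0(\infty)$ of tight solid tori whose meridians lie in the homology class of $T_{p,q}$. Re-attaching the two Heegaard solid tori $S_1$ (with meridian $\mu_U$) and $S_2$ (with meridian the slope $-r/s$ curve) produces two lens space summands. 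The decorated path for each summand consists of the appropriate half of the original totally inconsistent path (which carries a single sign) together with one of the tight solid torus pieces, so it is consistent; Theorem~\ref{thm:lens}(2) then shows each summand is universally tight, hence carries its standard contact structure up to the ambiguity of Remark~\ref{rmk:ambiguity}.

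To identify the summands topologically, the first summand has meridian classes $\mu_U$ and $T_{p,q}=p\lambda_U+q\mu_U$ on the common Heegaard torus; the lens space with these meridians has $|H_1|=p$ and, matching with the surgery convention $L(a,b)=S^3_U(-a/b)$, is $L(p,-q)$. The second summand's meridians are the slope $-r/s$ curve and $T_{p,q}$; their determinant gives $|H_1|=|ps+qr|$, and rewriting both meridians in the basis $(\bar p\lambda_U+\bar q\mu_U,\; p\lambda_U+q\mu_U)$ with $p\bar q-\bar pq=-1$ and reading off the residue mod $ps+qr$ recovers the second parameter $\bar ps+\bar qr$. The step I expect to require the most care is this final topological identification: while Lemma~\ref{lem:pants-tori} delivers the contact connected sum cleanly and universal tightness of each summand follows immediately from the sign pattern, matching the pair of meridian classes to the precise parameters in the statement demands careful bookkeeping of surgery slope conventions, the choice of representative for $\bar p,\bar q$, and orientations of the Heegaard splittings. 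A useful consistency check is that specializing to $T_{p,q}\subset S^3$ recovers Moser's classical formula for $pq$-surgery on a torus knot.
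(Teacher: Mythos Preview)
Your outline matches the paper's proof almost step for step: convert to Legendrian surgery via Proposition~\ref{prop:admissible-Legendrian} after using Lemma~\ref{lem:fibered} and Lemma~\ref{lem:binding} to locate a neighborhood of the right slope, change coordinates so the flanking basic slices become $B_\pm(-1,0)\cup B_\mp(0,\infty)$, apply Lemma~\ref{lem:pants-tori}, and then read off the two lens spaces and their universal tightness from the sign pattern of the totally inconsistent path. The paper carries out the topological identification of the second summand with exactly the matrix change of basis you describe.

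The one point you assert without justification is that your Legendrian approximation $L$ coincides with ``the distinguished Legendrian knot produced by'' Lemma~\ref{lem:pants-tori}. A priori there could be several Legendrian representatives of the slope-$0$ curve in $B_\pm(-1,0)\cup B_\mp(0,\infty)$ with contact framing one above the torus framing, and Lemma~\ref{lem:pants-tori} only analyzes a particular one. The paper closes this gap by invoking \cite[Theorem~1.11]{ChakrabortyEtnyreMin20Pre}, which says such a Legendrian is unique in a length-$2$ balanced continued fraction block; this is the same uniqueness already used inside the proof of Lemma~\ref{lem:pants-tori}. With that citation added, your argument is complete and essentially identical to the paper's.
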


For example, the torus framing admissible transverse surgery on $T_{5,-8}$ in $(L(3,1),\xi_{5,-8})$ yields a connected sum of standard contact structures on $L(8,5)$ and $L(7,3)$. See Figure~\ref{fig:decoratedFarey2}.
\begin{figure}[htbp]{\scriptsize
  \vspace{0.2cm}
  \begin{overpic}%[grid,tics=10] 
  {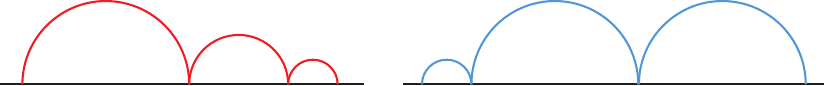}
    \put(47, 16){\Large \color{nred} $\circ$}
    \put(111, 8){\Large \color{nred} $+$}
    \put(147, 2){\large \color{nred} $\circ$}
    \put(344, 16){\Large \color{nblue} $\circ$}
    \put(261, 12){\Large \color{nblue} $-$}
    \put(211, 2){\large \color{nblue} $\circ$}
    \put(4, -10){$-3$}
    \put(84, -10){$-2$}
    \put(131, -10){$-\frac{5}{3}$}
    \put(154, -10){$-\frac{8}{5}$}

    \put(195, -10){$-\frac{8}{5}$}
    \put(219, -10){$-\frac{3}{2}$}
    \put(301, -10){$-1$}
    \put(386, -10){$0$}
  \end{overpic}}
  \vspace{0.3cm}
  \caption{Decorated paths in the Farey graph for the standard contact structures on $L(7,3)$ and $L(8,5)$.}
  \label{fig:decoratedFarey2}
\end{figure}

\begin{proof}
  The proof is essentially a reparametrization of Lemma~\ref{lem:pants-tori}. We first show that the torus framing admissible transverse surgery on $T_{p,q}$ is equivalent to Legendrian surgery on one of its Legendrian approximations. By Lemma~\ref{lem:binding}, there is a neighborhood $S_a$ of $T_{p,q}$ in $(L(r,s),\xi_{p,q})$ where $a$ is any number less than the page slope. Since $T_{p,q}$ is a non-trivial negative torus knot, Lemma~\ref{lem:fibered} says the torus framing plus one is less than the page slope so we can assume that $S_a$ has slope one larger than the torus framing. Thus as in the proof of Proposition~\ref{prop:negative=ot} we can assume that $S_a$ is a standard neighborhood of a Legendrian knot $L$ with contact framing one larger than the torus framing and $T_{p,q}$ is the transverse push-off of $L$. The proof of Proposition~\ref{prop:admissible-Legendrian} (\emph{c.f.} \cite[Lemma~3.16]{BaldwinEtnyre:transverse}) shows that torus framing admissible surgery on $T_{p,q}$ is the same as Legendrian surgery on $L$. 
  
  Since by Proposition~\ref{prop:negative=ot} the decorated path describing the contact structure $\xi_{p,q}$ is totally inconsistent at $q/p$ we know that inside of $L(r,s)$ we see the union of two basic slices
  \[
    B_\pm(p''/q'', p/q) \cup B_\mp(p/q,p'/q').
  \] 
   Now we change the coordinates using the following map  
  \[
    \phi = \begin{pmatrix}p' & -q' \\ -p & q\end{pmatrix}. %fixed
  \]
  Then $\phi$ sends $p/q \mapsto 0$, $p'/q' \mapsto \infty$ and $p''/q'' \mapsto -1$. As in the proof of Lemma~\ref{lem:pants-tori} we see there is a Legendrian knot $L$ realizing a $0$ sloped curve with contact framing one larger than the torus framing. Thus $\phi(L)$ is now a Legendrian knot in $B_\pm(-1,0) \cup B_\mp(0,\infty)$ such that the contact framing is one larger than the torus framing and \cite[Theorem~1.11]{ChakrabortyEtnyreMin20Pre} says that $\phi(L)$ is the Legendrian in Lemma~\ref{lem:pants-tori}. Now we apply Lemma~\ref{lem:pants-tori} and obtain a connected sum of two tight solid tori. Pulling back this contact manifold using $\phi^{-1}$, we obtain a connected sum of tight solid tori with meridian slope $p/q$. Thus the result of surgery is a connected sum of two lens spaces such that one has meridian slopes $p/q$ and $0$, and the other one has meridian slopes $-r/s$ and $p/q$. The first lens space is clearly $L(p,-q)$, and by changing the coordinates using the following map 
  \[
    \psi = \begin{pmatrix}\overline{p} & -\overline{q} \\ -p & q\end{pmatrix}%fixed
  \]
  sending $p/q \mapsto 0$ and $-r/s \mapsto (ps+qr)/(-\overline{p}s-\overline{q}r)$, we can see the second lens space should be $L(ps+qr,\overline{p}s+\overline{q}r)$. Thus we obtain
  \[
    L(p,-q) \,\#\, L(ps+qr,\overline{p}s+\overline{q}r). 
  \]

  Finally, since the signs of the edges in the decorated path clockwise of $p/q$ are the same, the contact structure on $L(p,-q)$ is universally tight by Theorem~\ref{thm:lens} so it is standard. Similarly, the contact structure on $L(ps+qr,\overline{p}s+\overline{q}r)$ is also standard. 
\end{proof}

%--------------------------------------------------------------------------------
\subsection{Symplectic pants cobordisms for Markov triples}\label{subsec:pants}
%--------------------------------------------------------------------------------
In this section we will build the symplectic cap for three lens spaces coming from a Markov triple. To identify these lens spaces we need a preliminary result about Markov triples. 

\begin{proposition}\label{prop:markovarith}
  For any Markov triple $(p_1,p_2,p_3)$, there exists a triple of positive integers $(q_1,q_2,q_3)$ satisfying the equations 
  \begin{align}
    p_3^2&=(p_1q_1-1)p_2^2+p_1^2(p_2q_2-1)\\
    p_3q_3-1&=p_2^2q_1^2+(p_1q_1+1)(p_2q_2-1)\\
    q_i &= \pm 3p_j/p_k \pmod{p_i}\\
    q_1 &< 0 
  \end{align}
  where $(i, j, k)$ is a permutation of $1$, $2$, $3$.
\end{proposition}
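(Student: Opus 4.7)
The plan is to recognize that equation (1) is equivalent, via the Markov equation, to a simple linear relation among $q_1$ and $q_2$; then equation (2) becomes a definition of $q_3$ whose integrality is forced by the same Markov relation. The congruences (3) then fall out of the linear relations modulo $p_i$, and sign condition (4) is arranged by exploiting the one-parameter family of Bezout solutions.

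First I would expand equation (1) to obtain $p_1^2 + p_2^2 + p_3^2 = p_1 p_2 (p_2 q_1 + p_1 q_2)$, then apply the Markov equation $p_1^2 + p_2^2 + p_3^2 = 3 p_1 p_2 p_3$ to reduce (1) to the clean linear identity
\[
3 p_3 = p_2 q_1 + p_1 q_2.
\]
Because any two entries of a Markov triple are coprime (a standard property of the Markov tree, provable by a short induction), Bezout produces integer solutions $(q_1, q_2)$, and the general solution has the form $(q_1 + kp_1,\, q_2 - kp_2)$ for $k \in \mathbb{Z}$. I would choose $k$ so that $q_1 \leq 0$, fulfilling (4); the remaining freedom should be enough to force $q_2, q_3 > 0$ using that $p_2, p_3$ grow with the depth in the Markov tree.

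Next, I would define $q_3$ through equation (2). Expanding (2) and substituting the linear identity rewrites it as
\[
p_3 q_3 = 3 p_2 p_3 q_1 + p_2 q_2 - p_1 q_1,
\]
so integrality of $q_3$ reduces to checking $p_3 \mid (p_2 q_2 - p_1 q_1)$. Multiplying $p_2 q_1 + p_1 q_2 = 3 p_3$ by $p_1$ gives $p_1 p_2 q_1 \equiv -p_1^2 q_2 \pmod{p_3}$, and the Markov equation supplies $p_1^2 \equiv -p_2^2 \pmod{p_3}$, so $p_2 (p_1 q_1 - p_2 q_2) \equiv 0 \pmod{p_3}$; coprimality of $p_2$ and $p_3$ closes the argument.

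For the congruences (3), reducing $p_2 q_1 + p_1 q_2 = 3 p_3$ modulo $p_1$ and modulo $p_2$ immediately yields the identities for $q_1$ and $q_2$ after inverting the coprime factor. For $q_3$ the cleanest route is to compute $p_1 p_3 q_3$ from the definition above, substitute the Markov equation $3 p_1 p_2 p_3 = p_1^2 + p_2^2 + p_3^2$, and simplify; this should collapse to the symmetric-looking relation $p_1 q_3 - p_3 q_1 = 3 p_2$, which reduces modulo $p_3$ to give (3) for $q_3$. The main potential obstacle is reconciling (4) with the positivity of $q_2, q_3$; I expect this to follow either by a direct size comparison using $p_3 \leq 3 p_1 p_2 p_3 / p_k$, or more robustly by induction along the Markov tree, verifying the base case $(1,1,1)$ by hand and checking that the construction is preserved under mutation.
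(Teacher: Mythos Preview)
Your approach is essentially the same as the paper's: both reduce equation (1), via the Markov identity, to the single linear relation $p_2q_1+p_1q_2=3p_3$, solve by B\'ezout, and then define $q_3$ from (2). The paper parameterizes by first picking $x,y$ with $p_1x+p_2y=1$ (with $x\geq 0$, $y\leq 0$) and setting $q_1=3p_3y$, $q_2=3p_3x$, $q_3=-3p_1y+3p_2x+9p_2p_3y$; this is a particular sub-family of your general solution $(q_1+kp_1,\,q_2-kp_2)$, but the verification of (1)--(4) proceeds identically either way.

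One point where your route is cleaner: for the congruence (3) at $i=3$ you derive the symmetric relation $p_1q_3-p_3q_1=3p_2$ directly from $p_3q_3=3p_2p_3q_1+p_2q_2-p_1q_1$ and the Markov equation, whereas the paper reduces to the claim $p_1y-p_2x\equiv 3p_2/p_1\pmod{p_3}$ and defers to \cite{Aigner:book}. Your derivation is self-contained and perhaps worth keeping.

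A small caution: you spend effort worrying about simultaneously arranging $q_2,q_3>0$ with $q_1\leq 0$. The statement's ``positive integers'' is in tension with condition (4); the paper's own construction takes $q_1=3p_3y\leq 0$ and does not verify positivity of the $q_i$. So this is not a genuine gap in your argument---you can drop that discussion.
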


\begin{proof}
  Pick two integers $x > 0$ and $y < 0$ satisfying $p_1x+p_2y = 1$. Then define
  \begin{align*}
    q_1 &\coloneqq 3p_3y \\
    q_2 &\coloneqq 3p_3x \\
    q_3 &\coloneqq -3p_1y+3p_2x + 9p_2p_3y
  \end{align*}

  The fact that the choices of $q_1$ and $q_2$ satisfy the third condition is immediate. We will show that they satisfy the first condition. 
  \begin{align*}
    (p_1q_1-1)p_2^2+p_1^2(p_2q_2-1) &= 3p_1p_2^2p_3y + 3p_1^2p_2p_3x - p_1^2 - p_2^2\\
    &=3p_1p_2p_3 - p_1^2 - p_2^2\\
    &= p_3^2
  \end{align*}

  The definition of $q_3$ is chosen so that the second condition is satisfied, as can easily be checked by noting that $p_1q_2+p_2q_1=3p_3$. To show that it satisfies the third condition, it's enough to prove that $p_1y - p_2x =\pm p_2/p_1 \pmod{p_3}$. To see that, observe
  \begin{align*}
  p_1(p_1y - p_2x) &\equiv -p_2^2y - p_1p_2x \pmod{p_3} &\text{ since } p_1^2 \equiv -p_2^2 \pmod{p_3}\\
  &\equiv -p_2 \pmod{p_3}
  \end{align*}
  
 Note that this shows $q_3 \neq 0$. The fourth condition is immediate from $y < 0$.
\end{proof}

\begin{definition}\label{def:pants}
  Let $(p_1,p_2,p_3)$ be a Markov triple and $(q_1,q_2,q_3)$ be a triple of integers from Proposition~\ref{prop:markovarith}. We call a compact symplectic $4$-manifold $(P,\omega_P)$ \dfn{a (concave) symplectic pants cobordism for $(p_1,p_2,p_3)$} if $b_2(P)=1$ and $(P,\omega_P)$ is a strong symplectic cap with three concave boundary components 
  \[
    \bigsqcup_{i=1}^3 L(p_i^2,p_iq_i-1).
  \]
\end{definition}

\begin{remark}
  Notice that the orientations of concave boundary components are the opposite of the ordinary boundary orientations. Thus a symplectic pants cobordism is a smooth cobordism from $L(p_3^2,p_3q_3-1)$ to $L(-p_1^2,p_1q_1-1) \sqcup L(-p_2^2,p_2q_2-1)$.
\end{remark}

\begin{theorem}\label{thm:symplectic-pants}
  For any Markov triple $(p_1,p_2,p_3)$, there exists a symplectic pants cobordism $(P,\omega_P)$ for $(p_1,p_2,p_3)$ such that the induced contact structure on each boundary component is standard.  Further, $(P,\omega_P)$ admits a (relative) symplectic handlebody decomposition consisting of one convex-concave $2$-handle and one concave $3$-handle attached to $L(p_3^2,p_3q_3-1)$. 
\end{theorem}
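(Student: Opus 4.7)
My plan is to construct $P$ in two stages: first attach a convex-concave $2$-handle to the symplectization of $(L(p_3^2,p_3q_3-1),\xi_{std})$ using Theorem~\ref{thm:cap}, and then attach a concave $3$-handle to split the resulting reducible upper boundary. The symplectization is a strong symplectic filling of $(L(p_3^2,p_3q_3-1),\xi_{std})$, and both handle additions preserve the concave nature of the remaining boundary components.

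To select the $2$-handle, I would first fix the right torus knot. By Proposition~\ref{prop:postive=univ} and Lemma~\ref{lem:fibered}, there is a positive torus knot $T \subset L(p_3^2,p_3q_3-1)$ that is the binding of a rational open book $(\mathcal{B},\pi)$ supporting $\xi_{std}$; the numerical data picking out $T$ comes from the integers produced in Proposition~\ref{prop:markovarith}. Viewed on the mirror $-L(p_3^2,p_3q_3-1)$, the same knot becomes a negative torus knot supporting an overtwisted contact structure by Proposition~\ref{prop:negative=ot}, and by Lemma~\ref{lem:fibered} the torus framing on the mirror side is strictly less than the page slope, so torus framing admissible transverse surgery is available.

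Next I apply Theorem~\ref{thm:cap} with framing $n$ equal to one greater than the torus framing of $T$. This produces a symplectic cap with convex bottom boundary $(L(p_3^2,p_3q_3-1),\xi_{std})$ and concave top boundary $(-M_{\overline{\mathcal{B}}}(-n),\xi_{\overline{\mathcal{B}}}(-n))$, and by construction the top-side surgery is precisely admissible torus-framing transverse surgery on a non-trivial negative torus knot. Theorem~\ref{thm:nonloose-surgery} then identifies this top boundary as a connected sum of two standard contact lens spaces. Matching the summands with the target pair $(L(p_1^2,p_1q_1-1),\xi_{std})$ and $(L(p_2^2,p_2q_2-1),\xi_{std})$ reduces to checking the arithmetic identities
\[
  p_3^2 = (p_1q_1-1)p_2^2 + p_1^2(p_2q_2-1) \quad \text{and}\quad p_3q_3-1 = p_2^2q_1^2 + (p_1q_1+1)(p_2q_2-1)
\]
from Proposition~\ref{prop:markovarith}. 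Any residual ambiguity between $\xi_{std}$ and $-\xi_{std}$ is absorbed by Corollary~\ref{cor:signEquiv} and Remark~\ref{rmk:ambiguity}.

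Finally, I attach a concave $3$-handle (an upside-down Weinstein $1$-handle, as in Example~\ref{ex:handles}) along the separating $S^2$ of the connected sum. This splits the top boundary into the disjoint union $(L(p_1^2,p_1q_1-1),\xi_{std}) \sqcup (L(p_2^2,p_2q_2-1),\xi_{std})$, yielding the desired three-boundary pants cobordism $(P,\omega_P)$. The resulting relative handle decomposition of $P$ on top of $[0,1] \times L(p_3^2,p_3q_3-1)$ consists of exactly one convex-concave $2$-handle and one concave $3$-handle; since the $3$-handle is attached along a null-homologous sphere, $b_2(P) = 1$. I expect the main obstacle to be the identification in the previous step, where the output of Theorem~\ref{thm:nonloose-surgery} must be checked against the Markov arithmetic (with careful tracking of coorientations) to confirm that the two summands really are the desired standard contact lens spaces.
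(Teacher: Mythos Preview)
Your overall strategy matches the paper's exactly: pick a positive torus knot in $(L(p_3^2,p_3q_3-1),\xi_{std})$ supporting the standard structure, attach a convex-concave $2$-handle via Theorem~\ref{thm:cap}, identify the new concave boundary through Theorem~\ref{thm:nonloose-surgery}, and then use an upside-down Weinstein $1$-handle as a concave $3$-handle.

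There is, however, a concrete error in your framing choice. You attach the convex-concave $2$-handle with framing $n$ equal to the torus framing \emph{plus one}, and then claim that the induced surgery on the mirror side is the torus-framing admissible transverse surgery. It is not: under Theorem~\ref{thm:cap} the mirror-side surgery coefficient is $-n$, so your choice yields surgery at one \emph{less} than the torus framing on $\overline{K}$, and Theorem~\ref{thm:nonloose-surgery} does not apply to that coefficient. The paper attaches with the torus framing itself (which already exceeds the page slope for a positive torus knot, by Lemma~\ref{lem:fibered}); this is exactly what makes the mirror-side surgery the torus-framing surgery required by Theorem~\ref{thm:nonloose-surgery}. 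You may have been misled by the appearance of ``torus framing plus one'' in the proof of Theorem~\ref{thm:nonloose-surgery}, but there it is the slope of the standard neighborhood (equivalently the $\tb$ of the Legendrian approximation), not the surgery coefficient.

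Two smaller omissions: you assert the existence of a suitable positive torus knot but do not specify it or verify positivity; the paper takes $K$ to be the $(-p_1q_1-1,p_1^2)$-torus knot and carries out a short case analysis (on the signs of $q_1$ and $q_3$, using Proposition~\ref{prop:markovarith}) to confirm it is positive. Likewise, Theorem~\ref{thm:nonloose-surgery} requires the negative torus knot on the mirror side to be nontrivial, which also needs checking.
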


\begin{proof}
  Let $K$ be a $(-p_1q_1-1, p_1^2)$-torus knot in $(L(p_3^2,p_3q_3-1), \xi_{std})$. We first claim that $K$ is a positive torus knot in $L(p_3^2,p_3q_3-1)$. First, by Proposition~\ref{prop:markovarith} we know $q_1 \leq 0$. Assume $q_1=0$. Then according to the proof of Proposition~\ref{prop:markovarith}, we have $x=1$ and $y=0$, which implies that $p_1 = 1$ and $q_3=3p_2$. Thus the defining equation $p_1^2+p_2^2+p_3^2=3p_1p_2p_3$ gives us $p_3^2=3p_2p_3-1-p_2^2$ and so we have
  \[
    \frac{p_1^2}{-p_1q_1-1} = -1 < -\frac{p_3^2}{p_3q_3-1} = -\frac{p_3^2}{3p_2p_3-1} = -\frac{p_3^2}{p_2^2+p_3^2}< 0
  \]
  and $K$ is a positive torus knot by the definition in Section~\ref{subsec:torus}. Now assume $q_1 < 0$. There are two cases we need to consider. First, suppose $q_3 > 0$. Then we have 
  \[
    -\frac{p_3^2}{p_3q_3-1} < 0 < \frac{p_1^2}{-p_1q_1-1}
  \]
  so $K$ is a positive torus knot. Next, suppose $q_3 < 0$. Then one can see that $0 > -p_2^2$ implies that $p_1^2(p_2^2q_1^2 + (p_1q_1+1)(p_2q_2-1)) > ((p_1q_1-1)p_2^2 +p_1^2(p_2q_2-1))(p_1q_1+1)$ and using Proposition~\ref{prop:markovarith} that inequality implies that $p_1^2(p_3q_3-1) > p_3^2(p_1q_1+1)$, which of course gives 
  \[
    \frac{p_1^2}{-p_1q_1-1} < -\frac{p_3^2}{p_3q_3-1}
  \]
  Combining with the fact $p_1^2/(-p_1q_1-1) > 0$, we can conclude that $K$ is a positive torus knot.
  
  Therefore, the torus framing of $K$ is greater than the page slope by Lemma~\ref{lem:fibered}. Also by Theorem~\ref{prop:postive=univ}, $K$ supports a standard contact structure. Thus $K$ satisfies the hypothesis of Theorem~\ref{thm:cap}, so we obtain a strong symplectic cap $(C,\omega_C)$ by attaching a convex-concave $2$-handle along $K$ with the torus framing. Recall that we orient the concave boundary of a symplectic cap as the opposite of the ordinary boundary orientation, so the resulting contact $3$-manifold is the result of the torus framing admissible transverse surgery on a negative torus knot $\overline{K}$, the $(p_1q_1+1, p_1^2)$-torus knot supporting the contact structure $\xi_{p_1q_1+1, p_1^2}$ on $L(-p_3^2,p_3q_3-1)$. It is straightforward to check
  \[
    p_1^2 q_1^2 - (p_1q_1-1)(p_1q_1+1) = 1.
  \]
  Therefore by Theorem~\ref{thm:nonloose-surgery}, the result of the surgery is a connected sum of standard contact structures on  
  \[
      L(p_1^2, -p_1q_1-1) \,\#\, L(p_1^2(p_3q_3-1)+(p_1q_1+1)(-p_3^2), (p_1q_1-1)(p_3q_3-1) - q_1^2 (-p_3^2)).
  \]
  Since $(-p_1q_1-1)(p_1q_1-1) \equiv 1\pmod{p_1^2}$, the first lens space is diffeomorphic to $L(p_1^2,p_1q_1-1)$. Also by Proposition~\ref{prop:markovarith}, we have
  \[
    \begin{pmatrix}
      -p_1 q_1-1 & q_1^2 \\
      -p_1^2 & p_1q_1-1\\
    \end{pmatrix}
    \begin{pmatrix} 
      1-p_2q_2\\
      p_2^2\\
    \end{pmatrix}
    =
    \begin{pmatrix}
      p_2^2 q_1^2+(p_1 q_1+1)(p_2 q_2-1)\\
      p_2^2(p_1 q_1-1)+p_1^2(p_2 q_2-1)\\
    \end{pmatrix}
    =
    \begin{pmatrix}
      p_3 q_3-1\\
      p_3^2\\
    \end{pmatrix},
  \]
  and this implies
  \[
    \begin{pmatrix}
      p_1q_1-1 & -q_1^2 \\
      p_1^2 & -p_1 q_1-1  \\
    \end{pmatrix}
    \begin{pmatrix}
      p_3 q_3-1\\
      p_3^2\\
    \end{pmatrix}
    =
    \begin{pmatrix}
      -q_1^2p_3^2 + (p_1q_1-1)(p_3q_3-1)\\
      -(p_1q_1 + 1)p_3^2 - p_1^2(1-p_3q_3)\\
    \end{pmatrix}
    =
    \begin{pmatrix} 
      1-p_2q_2\\
      p_2^2\\
    \end{pmatrix},
  \]
  Thus the second lens space is $L(p_2^2, p_2q_2-1)$.
  
  Separately, observe that we can attach a Weinstein $1$-handle to (a part of the symplectizations of)
  \[
    (L(p_1^2,p_1q_1-1), \xi_{std}) \;\;\text{and}\;\; (L(p_2^2,p_2q_2-1), \xi_{std})
  \] 
  and obtain a Weinstein cobordism $(W,\omega_W)$ from 
  \[
    (L(p_1^2,p_1q_1-1), \xi_{std}) \sqcup (L(p_2^2,p_2q_2-1), \xi_{std})
  \] 
  to 
  \[
    \left(L(p_1^2,p_1q_1-1) \,\#\, L(p_2^2,p_2q_2-1), \xi_{std} \,\#\, \xi_{std}\right).
  \] 

  Now we have a symplectic cap $(C,\omega_C)$ and a Weinstein cobordism $(W,\omega_W)$. The concave boundary of $(C,\omega_C)$ and the convex boundary of $(W,\omega_W)$ are contactomorphic, so we can glue $(C,\omega_C)$ and $(W,\omega_W)$ together along $L(p_2^2,p_2q_2-1) \,\#\, L(p_3^2,p_3q_3-1)$ and obtain the desired pants cobordism.  Since a Weinstein $1$-handle can be considered a concave $3$-handle when turned upside down, we can obtain the pants cobordism by attaching a convex-concave $2$-handle and a concave $3$-handle to $L(p_3^2,p_3q_3-1)$. See Figure~\ref{fig:pants} for a schematic picture for the pants cobordism.
\end{proof}

\begin{figure}[htbp]{
  \vspace{0.2cm}
  \begin{overpic}[tics=20]{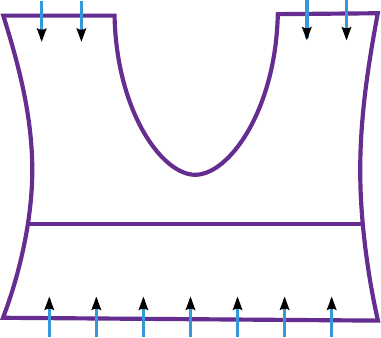}
   \put(55, 65){concave $3$-handle}
   \put(35, 30){convex-concave $2$-handle}
  \end{overpic}}
  \vspace{0.2cm}
  \caption{A schematic picture for a concave pants cobordism.}
  \label{fig:pants}
\end{figure}

Let $(p_1,p_2,p_3)$ be a Markov triple and $(P,\omega_P)$ is a (concave) symplectic pants cobordism for $(p_1,p_2,p_3)$ from Theorem~\ref{thm:symplectic-pants}. Since the induced contact structure on each concave boundary component of $(P,\omega)$ is standard, we can glue three Weinstein $B_{p_i,q_i}$ along the boundary components and obtain a closed symplectic $4$-manifold $(X_{p_1,p_2,p_3},\omega_{p_1,p_2,p_3})$. This construction is unique up to diffeomorphism since any contactomorphism of a standard contact structure on $L(p_i^2,p_iq_i-1)$ extends to a diffeomorphism of $B_{p_i,q_i}$ according to Remark~\ref{rem:pushingpast}. From the handlebody viewpoint, we consider this as starting from Weinstein $B_{p_3,q_3}$, attaching a pants cobordism $(P,\omega_P)$ (equivalently attaching a convex-concave $2$ handle and a concave $3$-handle, see Theorem~\ref{thm:symplectic-pants}). After that, we attach the upside down Weinstein $B_{p_1,q_1}$ and $B_{p_2,q_2}$ to each concave boundary component of the pants cobordism. Since we completely understand the Weinstein handlebody structure of each $B_{p_i,q_i}$, this gives a symplectic handlebody decomposition of $(X_{p_1,p_2,p_3},\omega_{p_1,p_2,p_3})$.

The next proposition summarizes the discussion above.

\begin{proposition}\label{prop:pants-to-closed}
  For any Markov triple $(p_1,p_2,p_3)$, there exists a closed symplectic $4$-manifold $(X_{p_1,p_2,p_3},\omega_{p_1,p_2,p_3})$ built from the symplectic pants cobordism for $(p_1,p_2,p_3)$ and $B_{p_i,q_i}$. Moreover, $(X_{p_1,p_2,p_3},\omega_{p_1,p_2,p_3})$ admits a symplectic handlebody decomposition consisting of a convex $0$-handle, a convex $1$-handle, a convex $2$-handle, a convex-concave $2$-handle, two concave $2$-handles, three concave $3$-handles, and two concave $4$-handles.  
\end{proposition}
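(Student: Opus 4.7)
The strategy is essentially bookkeeping: we must assemble the symplectic pants cobordism of Theorem~\ref{thm:symplectic-pants} with three copies of $B_{p_i,q_i}$ and verify both that the gluings are canonical and that the resulting handle count matches the advertised decomposition. I would begin by starting from the Weinstein $B_{p_3,q_3}$, which has a convex boundary $(L(p_3^2,p_3q_3-1),\xi_{std})$. To this convex boundary I would attach the symplectic pants cobordism $(P,\omega_P)$ from Theorem~\ref{thm:symplectic-pants}; by construction $(P,\omega_P)$ has convex boundary $(L(p_3^2,p_3q_3-1),\xi_{std})$ and two concave boundary components $(L(p_i^2,p_iq_i-1),\xi_{std})$ for $i=1,2$. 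The result is a symplectic manifold with two concave boundary components carrying standard contact structures.

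Next I would cap off each of these concave boundary components by the upside-down Weinstein $B_{p_i,q_i}$ for $i=1,2$; Figure~\ref{fig:BpqWeinstein} provides the requisite Weinstein handlebody structure. The key point here is that this gluing is unambiguous up to diffeomorphism: by Corollary~\ref{cor:signEquiv} any two contactomorphisms of $(L(p_i^2,p_iq_i-1),\xi_{std})$ are isotopic (up to orientation reversal, which is also controlled), and by item (2) of Remark~\ref{prop:diffBpq} every such contactomorphism extends to a diffeomorphism of $B_{p_i,q_i}$. This guarantees that the closed symplectic manifold $(X_{p_1,p_2,p_3},\omega_{p_1,p_2,p_3})$ is well-defined up to diffeomorphism from the choice of Markov triple.

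Finally, I would tally the handles. The convex Weinstein decomposition of $B_{p_3,q_3}$ contributes one convex $0$-handle, one convex $1$-handle, and one convex $2$-handle. Theorem~\ref{thm:symplectic-pants} shows that the pants cobordism contributes one convex-concave $2$-handle and one concave $3$-handle (the upside-down Weinstein $1$-handle used in the proof). Each of the two upside-down Weinstein handlebodies $B_{p_1,q_1}$ and $B_{p_2,q_2}$ contributes a concave $2$-handle (dual to the Weinstein $2$-handle), a concave $3$-handle (dual to the Weinstein $1$-handle), and a concave $4$-handle (dual to the Weinstein $0$-handle). Summing gives the claimed count: one convex $0$-handle, one convex $1$-handle, one convex $2$-handle, one convex-concave $2$-handle, two concave $2$-handles, three concave $3$-handles, and two concave $4$-handles, which matches Definition~\ref{def:decomposition}.

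I do not expect any of these steps to be genuinely difficult; the proposition is essentially a formal summary of the preceding construction. The only point that requires a touch of care is the well-definedness of the gluings, which is where Remark~\ref{prop:diffBpq} does the real work.
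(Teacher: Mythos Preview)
Your proposal is correct and follows exactly the same approach as the paper. In fact the paper does not give a separate proof of this proposition at all; it is introduced as ``The next proposition summarizes the discussion above,'' and that discussion consists precisely of the three steps you outline: start from Weinstein $B_{p_3,q_3}$, attach the pants cobordism (one convex-concave $2$-handle and one concave $3$-handle), cap with the upside-down $B_{p_1,q_1}$ and $B_{p_2,q_2}$, and invoke Remark~\ref{prop:diffBpq} for well-definedness of the gluing.
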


Currently, it is not clear that  $X_{p_1, p_2, p_3}$ is $\cp$ or that the symplectic structure $\omega_{p_1,p_2,p_3}$ is deformation equivalent to a symplectic structure on $\cp$. In the following two sections, we will show that $X_{p_1, p_2, p_3}$ is indeed $\cp$ and that $\omega_{p_1,p_2,p_3}$ is deformation equivalent to the standard symplectic structure on $\cp$.

%%%%%%%%%%%%%%%%%%%%%%%%%%%%%%%%%%%%%%%%%%%%%%%%%%%%%%%%%%%%%%%%%%%%%%%%%%%%%%%%%%%%
\section{From symplectic handlebodies to horizontal decompositions}\label{sec:top}
%%%%%%%%%%%%%%%%%%%%%%%%%%%%%%%%%%%%%%%%%%%%%%%%%%%%%%%%%%%%%%%%%%%%%%%%%%%%%%%%%%%%

In order to exhibit explicitly the symplectic structure on the manifolds $X_{p_1,p_2,p_3}$ in Section~\ref{sec:geometry}, it is convenient to use the pants construction outlined in Section~\ref{subsec:pants}. However, in order to identify these manifolds as $\cp$, it is convenient to use a construction known as \emph{horizontal handle decompositions}, introduced in recent work of Lisca and Parma, \cite{LiscaParma:horiz}. In Section~\ref{subsec:horiz}, we will introduce horizontal handle decompositions and recall the relevant results from \cite{LiscaParma:horiz,LiscaParma:horiz2}. In Section~\ref{subsec:convert}, we will convert our construction from Section~\ref{subsec:pants} into a horizontal handle diagram as shown in Figure~\ref{fig:handlebody}. This conversion will be used to prove Theorem~\ref{thm:sympCP2} that identifies $X_{p_1,p_2,p_3}$ with $\cp$ in Section~\ref{subsec:sympemb}, which completes our proof of Theorem~\ref{thm:main1} and explicitly symplectically embeds three rational homology balls associated to Markov triples into $\cp$. 

%-----------------------------------------------------------------
\subsection{Horizontal handle decompositions}\label{subsec:horiz}
%-----------------------------------------------------------------
Horizontal handle decompositions were developed by Lisca and Parma in  \cite{LiscaParma:horiz}. The following treatment will be brisk; for more details, see \cite{LiscaParma:horiz}. 

Let $H$ be a handle decomposition of a closed 4-manifold $X$. By a standard argument, $H$ can be assumed to have a unique 0-handle and a unique 4-handle. Let $H_1$ denote the sub-handlebody consisting of the 0- and 1-handles. 

\begin{definition}
  A handle decomposition $H$ is \emph{horizontal} with genus $g$ if $\bd(H_1)$ admits a genus $g$ Heegaard splitting surface $\Sigma$ such that
  \begin{enumerate}
    \item There is some order $\{h_i:i\in\mathbb{N}\}$ on the set of 2-handles of $H$ such that the 2-handles can be isotoped in $\bd H_1$ so that in some neighborhood $\Sigma\times [0,1]$ of $\Sigma$, the attaching sphere of $h_i$ is a non-separating simple closed curve on $\Sigma\times\{1-1/i\}$.
    \item each 2-handle framing $f_i$ is equal to $f_\Sigma\pm 1$ where $f_\Sigma$ is the framing induced by $\Sigma$.
  \end{enumerate}
\end{definition}

The requirements about $\Sigma$ and the 2-handle framings in the definition of a horizontal handle decomposition cause the 2-handle attachments to naturally modify a Heegaard splitting of the boundary.  More precisely,  let $H$ be a horizontal handle decomposition and let $H_1^j$ denote the sub-handlebody of $H$ consisting of $H_1$ together with $h_1, \ldots, h_j$, the first $j$ 2-handles. Lemma \ref{lem:surfacetwist} guarantees that for all $j$, $\bd H_1^j$ inherits a natural genus $g$ Heegaard splitting.  The theory of horizontal handle diagrams is thus particularly helpful to demonstrate disjoint embeddings of a collection of 3-manifolds with bounded Heegaard genus into a 4-manifold. 

Lisca and Parma utilize this theory to give smooth embeddings of collections of lens spaces into $\cp$. They also give classifications of the smooth 4-manifolds realized by horizontal decompositions with small genus. We will use one of their classification results. For the statement, recall that an essential simple closed curve on $T^2$ can be written as $\gamma = q\lambda_U + p\mu_U$ where $\mu_U$ and $\lambda_U$ are curves on $T^2$ that form a symplectic basis of $H_1(T^2)$. 

\begin{theorem}[Lisca--Parma \cite{LiscaParma:horiz2}]\label{thm:LiscaParma}
  Let $X$ be a closed oriented 4-manifold with a horizontal decomposition of genus one having one 0-handle, one 1-handle, three 2-handles, one 3-handle, and one 4-handle. Suppose that the 2-handles are attached along essential simple closed curves $\gamma_1$, $\gamma_2$, and $\gamma_3$ such that each has framing $-1$ relative to the surface framing. Let $x_1:=\gamma_2\cdot\gamma_3$, $x_2:=\gamma_1\cdot\gamma_3$, and $x_3\colon = \gamma_1\cdot\gamma_2$. If $(x_1,x_2,x_3)\neq (0,0,0)$ and $x_1^2+x_2^2+x_3^2=x_1x_2x_3$, then $X$ is diffeomorphic to $\cp$. 
\end{theorem}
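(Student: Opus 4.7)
The plan is to proceed by induction on the Markov tree. First, I would observe that any integer solution $(x_1, x_2, x_3)$ to $x_1^2+x_2^2+x_3^2 = x_1x_2x_3$ with not all $x_i$ zero must satisfy $3 \mid x_i$ for each $i$: since $x^2 \pmod{3}$ takes values in $\{0, 1\}$, the sum of three squares vanishes mod $3$ only when each summand does. Writing $x_i = 3 p_i$ then reduces the equation to the Markov equation $p_1^2+p_2^2+p_3^2 = 3 p_1 p_2 p_3$. By the structure of the Markov tree described in Section~\ref{subsec:Markov}, it therefore suffices to verify the base case $(p_1, p_2, p_3) = (1, 1, 1)$ and to show that each Markov mutation preserves the diffeomorphism type of $X$.

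For the base case, the intersection triple is $(x_1, x_2, x_3) = (3, 3, 3)$, and an explicit triple of primitive curves on the Heegaard torus $T^2 \subset \partial H_1 \cong S^1 \times S^2$ realizing these intersection numbers is $\gamma_1 = \alpha$, $\gamma_2 = 2\alpha + 3\beta$, $\gamma_3 = \alpha + 3\beta$ for a standard basis $(\alpha, \beta)$ of $H_1(T^2)$ (one checks directly that $\gamma_i \cdot \gamma_j = 3$ for $i \neq j$). I would draw the corresponding Kirby diagram (a dotted unknot for the 1-handle, three curves on the Heegaard torus each with framing $-1$ relative to the surface framing, plus an implicit 3-handle and 4-handle), cancel the 1-handle against the 2-handle along $\gamma_1$ (which geometrically intersects the belt sphere of the 1-handle exactly once), track the images of $\gamma_2, \gamma_3$ on the standard Heegaard torus of the resulting $S^3$, simplify via further handle slides and isotopies, cancel one of the remaining 2-handles against the 3-handle, and verify that the result is a $+1$-framed unknot in $S^3$ — the standard Kirby diagram for $\cp$.

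For the inductive step, a Markov mutation of the $x$-triple, say $(x_1, x_2, x_3) \to (x_2 x_3 - x_1, x_2, x_3)$, should be realized at the level of the Kirby diagram by a sequence of Kirby moves (handle slides and isotopies in $\partial H_1$) that preserve the diffeomorphism type of $X$. The mutation effectively replaces the curve whose intersection pattern is being mutated with a new primitive curve on the Heegaard torus, obtained as a band sum with another curve (together with a possible change of basis of $T^2$). One must then verify that the framing of the new 2-handle remains $-1$ relative to the new surface framing, using the standard rule that a handle slide changes the framing by a term involving the framings of the slid handles and their linking number.

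The chief obstacle will be carrying out this inductive step rigorously: a naive single handle slide $\gamma_i \mapsto \gamma_i \pm \gamma_j$ often fails to produce a primitive curve (for instance, $\gamma_2 + \gamma_1 = (3, 3)$ in the base case above is not primitive), so the mutation must be implemented as a sequence of slides combined with a change of Heegaard torus inside $\partial H_1$. Keeping careful track of the surface framings through this sequence, and verifying that the $-1$-surface-framing condition is preserved at each step, is the delicate technical heart of the argument. Once this is done, induction starting from the explicit identification in the base case yields a diffeomorphism $X \cong \cp$ for every admissible triple.
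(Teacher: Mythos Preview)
The paper does not prove this theorem; it is quoted as a result of Lisca--Parma and used as a black box in the proof of Theorem~\ref{thm:sympCP2}. So there is no ``paper's own proof'' to compare against. That said, your strategy is very much in the spirit of what the paper does in Section~\ref{subsec:mutate}: there the authors show, for their \emph{specific} manifolds $X_{p_1,p_2,p_3}$, that mutation of the Markov triple is realized by Kirby moves (Lemma~\ref{lem:mutate}) and that the base case $X_{1,1,1}$ is $\cp$ directly. Your proposal is essentially an attempt to upgrade that argument from the particular family $X_{p_1,p_2,p_3}$ to arbitrary horizontal decompositions satisfying the intersection hypothesis.

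There is, however, a genuine gap you have not addressed. The hypothesis of the theorem only fixes the pairwise intersection numbers $x_i=\gamma_j\cdot\gamma_k$, not the curves themselves, and a priori several inequivalent triples of primitive classes on $T^2$ could realize the same $(x_1,x_2,x_3)$. Concretely, after using $SL(2,\Z)$ to set $\gamma_1=(1,0)$, one has $\gamma_2=(a_2,x_3)$ and $\gamma_3=(a_3,x_2)$ with $a_2x_2-a_3x_3=x_1$, subject to the residual stabilizer $(a_2,a_3)\mapsto(a_2+nx_3,\,a_3+nx_2)$ and the primitivity constraints $\gcd(a_2,x_3)=\gcd(a_3,x_2)=1$. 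Your base case checks one explicit configuration for $(3,3,3)$, but the theorem asserts the conclusion for \emph{every} such $X$. In fact for $(3,3,3)$ the configuration is unique (the constraints force $a_3\equiv 1\pmod 3$), and one expects uniqueness in general for Markov-type triples, but this must be argued; alternatively, your inductive step must be formulated so that it carries an \emph{arbitrary} configuration for a given triple to some configuration for a mutated triple, with the base case then handling all configurations for $(3,3,3)$. Without one of these two fixes the induction does not cover all $X$ in the statement.
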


To prove Theorem~\ref{thm:sympCP2} that identifies $X_{p_1,p_2,p_3}$ with $\cp$, we will show in Section~\ref{subsec:convert} that the symplectic 4-manifolds $(X_{p_1,p_2,p_3},\omega_{p_1,p_2,p_3})$ we constructed in Section~\ref{subsec:pants} in fact admit a horizontal handle decompositions which have the form in Theorem~\ref{thm:LiscaParma}. The conclusion that our symplectic manifolds are in fact diffeomorphic to $\cp$ then follows from Theorem \ref{thm:LiscaParma}. 

%------------------------------------------------------------------------
\subsection{Converting pants to horizontal handles}\label{subsec:convert}
%------------------------------------------------------------------------
In order to recognize the closed symplectic 4-manifolds we built in Proposition \ref{prop:pants-to-closed}, we would like to draw an explicit horizontal handle diagrams of these manifolds, and then identify it using Theorem~\ref{thm:LiscaParma}. When we built the symplectic pants cobordism in Theorem \ref{thm:symplectic-pants}, the pants cobordism is described by attaching a 2-handle to a lens space; the resulting top boundary is a connected sum of lens spaces. We will find it convenient here to take the dual perspective.  We begin in Proposition~\ref{prop:feet-to-waist} by describing a 2-handle attachment to $B_{p_1,q_1}\,\natural\, B_{p_2,q_2}$ which results in a 4-manifold $C$ with $\partial C=L(-p_3^2,p_3 q_3-1)$, where $\natural$ denotes the boundary connected sum. Then in Proposition \ref{prop:pants-equals-pants} we show that this 2-handle cobordism from $L(p_1^2,p_1q_1-1)\,\#\,L(p_2^2,p_2q_2-1)$ to $L(-p_3^2,p_3 q_3-1)$ is indeed the pants cobordism from Theorem~\ref{thm:symplectic-pants} turned upside down. Finally, in Proposition~\ref{prop:final-ball} we give a handle diagram for the result of attaching $B_{p_3,q_3}$ to $C$; this final handle diagram describes the closed manifold $X_{p_1,p_2,p_3}$ of Proposition~\ref{prop:pants-to-closed}.  We will then prove Theorem~\ref{thm:sympCP2} by applying Theorem~\ref{thm:LiscaParma} to our final horizontal handle diagram. 

\begin{proposition}\label{prop:feet-to-waist}
  For pairs of integers $(p_i, q_i)$ satisfying the conditions in Proposition \ref{prop:markovarith}, when a 2-handle is attached to the boundary connect sum $B_{p_1,q_1}\,\natural\, B_{p_2,q_2}$ with the attaching sphere and framing demonstrated in yellow in Figure~\ref{fig:pantsconvert1}, we obtain the 4-manifold given in Figure~\ref{fig:pantsconvert}, which has boundary $L(-p_3^2,p_3q_3-1)$. 
\end{proposition}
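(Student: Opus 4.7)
The plan is to verify the claim by explicit handle calculus, then identify the boundary by tracking curves on the induced Heegaard splitting, following the template of the proof of Lemma~\ref{lem:Bpq}.

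First, I would write out the handle diagram of $B_{p_1,q_1}\,\natural\,B_{p_2,q_2}$ explicitly. From Figure~\ref{fig:Bpq}, this is two $1$--handles together with, for each $i \in \{1,2\}$, a blue $2$--handle attached along a $(-p_i,q_i)$--torus knot on a Heegaard torus, with framing $-1$ relative to the torus framing. The yellow $2$--handle of Figure~\ref{fig:pantsconvert1} is attached along some specified curve that crosses both $1$--handles. I would then perform a sequence of isotopies and handle slides (over the two blue $2$--handles) to put the diagram into the form of Figure~\ref{fig:pantsconvert}; once the attaching data are recorded explicitly, these moves are mechanical.

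Second, to compute the boundary, I would use the genus--$2$ Heegaard splitting of $\partial(B_{p_1,q_1}\,\natural\,B_{p_2,q_2}) = L(p_1^2, p_1q_1-1)\,\#\,L(p_2^2,p_2q_2-1)$ coming from the two Heegaard tori of Figure~\ref{fig:Bpq}. After isotopy, the yellow attaching curve lies on this surface with framing differing from the surface framing by $\pm 1$, so by Lemma~\ref{lem:surfacetwist} the new boundary is obtained from the same genus--$2$ handlebody pair by pre-composing the gluing map with a Dehn twist along the yellow curve.

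Third, I would use the matrix $\bigl(\begin{smallmatrix} 1-p_iq_i & -q_i^2 \\ p_i^2 & 1+p_iq_i \end{smallmatrix}\bigr)$ from Remark~\ref{rem:pushingpast}(3) to push the yellow curve past each blue surgery and read off the image of a meridian of each $V_i$ under the new gluing. A direct linear-algebra computation, using $p_1\overline q - \overline p q_1$-type relations between the $(p_i,q_i)$, should show that after the twist one of the $V_i$-meridians maps to a simple closed curve intersecting the original meridian of $V_{3-i}$ exactly once; stabilization-destabilization of the Heegaard splitting then reduces it to genus one, so the boundary is a lens space $L(a,b)$.

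Finally, to identify the invariants $(a,b)$, I would apply the two arithmetic identities in Proposition~\ref{prop:markovarith}:
\[
 p_3^2 = (p_1q_1-1)p_2^2 + p_1^2(p_2q_2-1), \qquad p_3q_3-1 = p_2^2 q_1^2 + (p_1q_1+1)(p_2q_2-1),
\]
which are precisely the shapes obtained by multiplying the two $B_{p_i,q_i}$-twist matrices from Remark~\ref{rem:pushingpast}(3) together with the matrix of the Dehn twist about the yellow curve. Matching numerator and denominator (with orientation accounting for the sign $-p_3^2$) then identifies the boundary as $L(-p_3^2, p_3q_3-1)$. The main obstacle is purely bookkeeping: carrying the correct basis of $H_1$ of the Heegaard surface through two ``push past blue surgery'' operations and one Dehn twist, and recognising the resulting continued-fraction slope as the one predicted by the Markov identities; no further topology is needed beyond Lemma~\ref{lem:surfacetwist} and Proposition~\ref{prop:markovarith}.
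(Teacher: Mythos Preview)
Your ingredients are right --- Lemma~\ref{lem:surfacetwist}, the push-past matrices of Remark~\ref{rem:pushingpast}, and the identities of Proposition~\ref{prop:markovarith} are exactly what the paper uses --- but you have the architecture inverted, and this makes your boundary computation harder than it needs to be.

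The passage from Figure~\ref{fig:pantsconvert1} to Figure~\ref{fig:pantsconvert} is not done by handle slides over the blue $2$--handles: the yellow $2$--handle runs geometrically once over one of the two $1$--handles, so this is a $1$--$2$ handle \emph{cancellation}. After cancelling (and an isotopy in $S^1\times S^2$), you are left with a single $1$--handle and two $2$--handles (the red and blue torus knots in Figure~\ref{fig:pantsconvert}), and the boundary computation is already at genus one. There is no need to work on a genus--$2$ Heegaard surface, track the yellow curve, or destabilise; that route would work in principle but involves substantially more bookkeeping and is where your outline becomes vague.

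Once at Figure~\ref{fig:pantsconvert}, the paper simply repeats the proof of Lemma~\ref{lem:Bpq}: take the longitude $\lambda_U$ of the single dotted circle, push it past the blue curve to get $\bigl(\begin{smallmatrix}1-p_2q_2\\p_2^2\end{smallmatrix}\bigr)$, then push past the red curve via the matrix $\bigl(\begin{smallmatrix}1+p_1q_1&-q_1^2\\p_1^2&1-p_1q_1\end{smallmatrix}\bigr)$. The two identities from Proposition~\ref{prop:markovarith} identify the result as $\bigl(\begin{smallmatrix}-(p_3q_3-1)\\-p_3^2\end{smallmatrix}\bigr)$, and you are done. So your final paragraph is exactly right; just apply it to the longitude in the genus--one picture rather than to the yellow curve in the genus--two picture.
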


\begin{proof}
  Figure~\ref{fig:pantsconvert1}, \ref{fig:pantsconvert2}, and ~\ref{fig:pantsconvert} gives the 4-manifold diffeomorphism claimed. 

  \begin{figure}[htbp]{\small
    \vspace{0.2cm}
    \begin{overpic}%[grid,tics=10] 
    {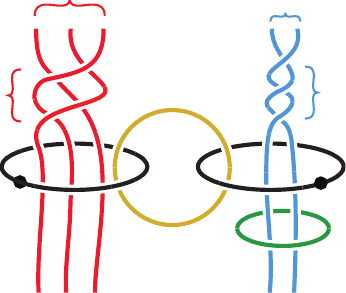}
      \put(-14, 94){\color{nred}$-q_1$}
      \put(30, 146){\color{nred}$p_1$}     
      \put(130, 139){\color{lblue}$-p_2$}
      \put(155, 94){\color{lblue}$q_2$}     
      \put(100,29){\color{l2yellow}$0$}
      
      \put(155, 18){\color{dgreen}$\langle 0\rangle$}     
    \end{overpic}}
    \vspace{0.3cm}
    \caption{The boundary sum $B_{p_1,q_1}\,\natural\, B_{p_2,q_2}$ with a $2$-handle attached.  All the vertical 2-handles are $-1$ framed with respect to the torus framing. The lower-right (green) unknot is not part of the surgery diagram, but a framed curve we will be watching. It is helpful to note that the green curve is the meridian to the yellow curve, as can be seen by sliding the green curve over the left-hand $1$-handle (thought of as a $0$-framed $2$-handle). }
    \label{fig:pantsconvert1}
  \end{figure}

  From Figure~\ref{fig:pantsconvert1} to the left hand side of Figure~\ref{fig:pantsconvert2}, slide the blue $2$-handle over the yellow $2$-handle several times until it is disjoint from the left most $1$-handle and then cancel the right most $1$-handle with the yellow $2$-handle. Then in Figure~\ref{fig:pantsconvert2} we convert the dotted circle notation for a $1$-handle to the standard representation by explicitly drawing the attaching sphere of the $1$-handle, recall this is two $3$-balls, in the figure one is the balls is contained in the innermost $2$-sphere while the second is outside of the outermost $2$-sphere. We note that we can explicitly see the boundary of the $0$ and $1$-handle, which is $S^1\times S^2$ by identifying the innermost and outermost $2$-spheres in the figure. 
  %we cancel one pair of $1$- and $2$-handles, and convert from a standard handle diagram to explicitly drawing the attaching spheres of the 2-handles in $S^1\times S^2$.  
  From the left hand side to the right hand side of Figure~\ref{fig:pantsconvert2}, we isotope the attaching spheres in $S^1\times S^2$. From the right frame of Figure~\ref{fig:pantsconvert2} to Figure~\ref{fig:pantsconvert}, we convert back into dotted circle notation. 

  \begin{figure}[htbp]{\small
    \vspace{0.2cm}
    \begin{overpic}%[grid,tics=10] 
    {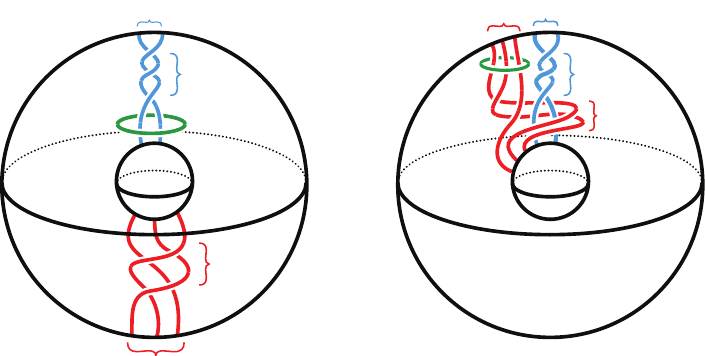}
      \put(72, -8){\color{nred}$p_1$}
      \put(103, 42){\color{nred}$-q_1$}
      \put(66, 167){\color{lblue}$-p_2$}
      \put(90, 134){\color{lblue}$q_2$}   
      \put(48, 118){\color{dgreen}$\langle 0\rangle$}  
        
      \put(240, 164){\color{nred}$p_1$}
      \put(290, 114){\color{nred}$q_1$}
      \put(256, 167){\color{lblue}$-p_2$}
      \put(279, 134){\color{lblue}$q_2$}   
      \put(219, 131){\color{dgreen}$\langle 0\rangle$}   
    \end{overpic}}
    \vspace{0.3cm}
    \caption{Attaching spheres of $2$-handles in $S^1\times S^2$. All red and blue 2-handles are $-1$ framed with respect to the torus framing. Angled brackets denote a framing we are watching, not an attaching instruction.}
    \label{fig:pantsconvert2}
  \end{figure}

  \begin{figure}[htbp]{\small
    \vspace{0.2cm}
    \begin{overpic}%[grid,tics=10] 
    {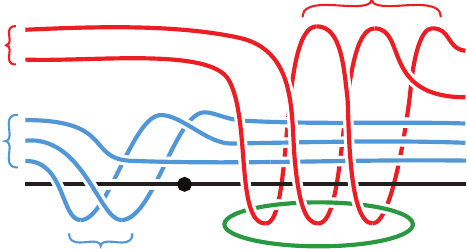}
      \put(179, 124){\color{nred}$p_1$}
      \put(-9, 96){\color{nred}$q_1$}
      \put(41, -6){\color{lblue}$-p_2$}
      \put(-9, 51){\color{lblue}$q_2$}   
      \put(195, -3){\color{dgreen}$\langle 0\rangle$}  
    \end{overpic}}
    \vspace{0.3cm}
    \caption{All red and blue 2-handles are $-1$ framed with respect to the torus framing. Angled brackets denote a framing we are watching, not an attaching instruction.}
    \label{fig:pantsconvert}
  \end{figure}

  That the boundary of Figure~\ref{fig:pantsconvert} is a lens space follows from Lemma~\ref{lem:surfacetwist}.  To compute which lens space, we use the method we set up in Remark~\ref{rem:pushingpast}. In the proof of Lemma~\ref{lem:Bpq} we already computed that the result of pushing the longitude,  $\begin{pmatrix} 1 \\ 0 \end{pmatrix}$, of black curve past the blue surgery curve is $\begin{pmatrix} 1-p_2q_2 \\ p_2^2 \end{pmatrix}$. As in Remark \ref{rem:pushingpast}, we compute that subsequently pushing past red surgery curve gives
  \[
    \begin{pmatrix}
      1+p_1 q_1  & -q_1^2\\ 
      p_1^2 & 1-p_1q_1 
    \end{pmatrix}
    \begin{pmatrix} 
      1-p_2q_2\\
      p_2^2\\
    \end{pmatrix}
    =
    \begin{pmatrix}
      -p_2^2q_1^2-(p_1q_1+1)(p_2q_2-1)\\
      p_1^2(1-p_2q_2)+p_2^2(1-p_1q_1)\\
    \end{pmatrix}
    =
    \begin{pmatrix}
      -(p_3 q_3-1)\\
      -p_3^2\\
    \end{pmatrix},
  \]
  where the last equality comes from Proposition \ref{prop:markovarith}. 
  \end{proof}

Let $W_{p_1,p_2,q_1,q_2}$ denotes this 1- and (yellow) 2-handle cobordism from $L(p_1,q_1)\,\sqcup\, L(p_2,q_2)$ to $L(-p_3^2,p_3q_3-1)$ shown in Figure~\ref{fig:pantsconvert1}. 

\begin{proposition}\label{prop:pants-equals-pants}
  If the pairs of integers $(p_i,q_i)$ satisfy the equations in Proposition \ref{prop:markovarith}, then $W_{p_1,p_2,q_1,q_2}$ is diffeomorphic to the pants cobordism from Theorem \ref{thm:symplectic-pants}. 
\end{proposition}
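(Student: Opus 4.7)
The strategy is to compare the two handle structures directly, viewing each as a cobordism built from a single $1$-handle and a single $2$-handle attached to $L(p_1^2,p_1q_1-1)\sqcup L(p_2^2,p_2q_2-1)$. For the pants cobordism of Theorem~\ref{thm:symplectic-pants}, this structure is revealed by turning the cobordism upside down: the concave $3$-handle becomes a $1$-handle connecting the two lens space boundaries, and the convex-concave $2$-handle becomes a smooth $2$-handle attached along the belt sphere of the original handle, viewed now as a knot in the new upper boundary $L(p_1^2,p_1q_1-1)\#L(p_2^2,p_2q_2-1)$. The $1$-handle attachment is determined up to isotopy once we specify which pair of $3$-balls to remove (one in each lens space), so it suffices to show that the two $2$-handles are attached along isotopic framed knots in this connect sum.

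The next step is to identify the belt sphere. The original convex-concave $2$-handle is attached along the positive torus knot $K=T_{-p_1q_1-1,\,p_1^2}$ in $L(p_3^2,p_3q_3-1)$ with torus framing. Working in a neighborhood of a Heegaard torus containing $K$, I would apply the coordinate change $\phi$ from the proof of Theorem~\ref{thm:nonloose-surgery} to send $K$ to the slope-$0$ curve of Lemma~\ref{lem:pants-tori}. Under this identification the belt sphere of the $2$-handle becomes the dual circle of the $0$-framed surgery inside the pair-of-pants-times-$S^1$ region, and Lemma~\ref{lem:whysomanypants} identifies it with a meridian of each of the two solid tori appearing in the connect-sum decomposition. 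Pulling back by $\phi^{-1}$ and $\psi^{-1}$, I get that the dual $2$-handle is attached along a meridian of the connect-sum neck of $L(p_1^2,p_1q_1-1)\#L(p_2^2,p_2q_2-1)$, with framing induced by the $2$-handle disk.

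To conclude, I would compare this with Figure~\ref{fig:pantsconvert1}. In that diagram the connect sum $L(p_1^2,p_1q_1-1)\#L(p_2^2,p_2q_2-1)$ is presented as the boundary of $B_{p_1,q_1}\,\natural\, B_{p_2,q_2}$, and the yellow $0$-framed unknot is exactly a meridian of the connect-sum $1$-handle, bounding a disk that passes once through the neck; this matches the belt sphere identified in the previous paragraph. Combining matched $1$-handles, matched attaching circles, and matched framings then produces a diffeomorphism of cobordisms.

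The main technical obstacle will be the bookkeeping in the identification step: carefully following the belt sphere through the coordinate changes $\phi$ and $\psi$ and reconciling the framing conventions with the ``$-1$ relative to the torus framing'' conventions in Figure~\ref{fig:pantsconvert1} and Figure~\ref{fig:Bpq}. As an additional consistency check on the framing, one may observe that both cobordisms produce $L(-p_3^2,p_3q_3-1)$ on the top boundary — for $W_{p_1,p_2,q_1,q_2}$ via Proposition~\ref{prop:feet-to-waist}, and for the pants cobordism via Theorem~\ref{thm:nonloose-surgery} together with Proposition~\ref{prop:markovarith} — so once the attaching circles are matched as meridians of the connect-sum neck, this pins down the framing uniquely.
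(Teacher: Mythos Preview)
Your overall strategy is the mirror of the paper's: both reduce the comparison to matching one framed belt sphere with one attaching sphere, but the paper goes the other way. It tracks the belt sphere of the \emph{yellow} $2$-handle of $W_{p_1,p_2,q_1,q_2}$ (the green $\langle 0\rangle$-curve in Figure~\ref{fig:pantsconvert1}) through the moves to Figure~\ref{fig:pantsconvert}, where it sits as the longitude $\lambda_U$ on a torus between the blue and red curves, and then pushes it past the red curve via Remark~\ref{rem:pushingpast}:
\[
  \begin{pmatrix}1+p_1q_1 & -q_1^2\\ p_1^2 & 1-p_1q_1\end{pmatrix}\begin{pmatrix}1\\0\end{pmatrix}=\begin{pmatrix}p_1q_1+1\\p_1^2\end{pmatrix}.
\]
This is exactly the torus knot along which the convex-concave $2$-handle of Theorem~\ref{thm:symplectic-pants} is attached, viewed in $L(-p_3^2,p_3q_3-1)$, and the torus framing matches on the nose. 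So the paper's proof is a single matrix computation in $L(-p_3^2,p_3q_3-1)$ rather than an identification inside the connect sum.

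Your version has a genuine gap in the identification step. You invoke Lemma~\ref{lem:whysomanypants} to say the dual circle of the surgery becomes ``a meridian of each of the two solid tori,'' but that lemma says the \emph{fiber} curves $\{p\}\times S^1$ become meridians. The dual circle is the \emph{core} of the filling solid torus, which is transverse to those fibers: it is isotopic to a $\partial P\times\{*\}$ curve, not a $\{*\}\times S^1$ curve. Tracing this through, the dual knot is a band sum of the \emph{cores} of the two solid tori --- equivalently a connect sum of rational unknots in $L(p_1^2,p_1q_1-1)\#L(p_2^2,p_2q_2-1)$ --- not a meridian of anything. This does in the end match the yellow curve (compare the round-$1$-handle description in Proposition~\ref{complement}), but ``meridian of the connect-sum neck'' is both the wrong object and not an isotopy class: each lens space has two non-isotopic Heegaard cores, and a band sum depends on the band, so you would still have to pin down exactly which cores and which band. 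Finally, your framing consistency check (same upper boundary forces same framing) is not a proof as stated; here it can be rescued because both framings are visibly the torus framing on a Heegaard torus, but that needs to be said rather than inferred from the surgery output.
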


\begin{proof}
Recall from Section~\ref{subsec:handles}, that a $2$-handle is $D^2\times D^2$ attached to a manifold along $(\partial D^2)\times D^2$ and we call $\partial D^2\times\{0\}$ the attaching sphere and $\{0\}\times \partial D^2$ the belt sphere, so if we turn the handlebody picture upside down, then the belt sphere becomes the attaching sphere. See \cite[Sections~4.1 and~5.5]{GompfStipsicz99}. We also recall that $D^2\times \{0\}$ is called the core and $\{0\}\times D^2$ is called the co-core of the $2$-handle.
Thus to prove the proposition, it suffices to locate the belt sphere of the 2-handle of $W_{p_1,p_2,q_1,q_2}$ in $L(-p_3^2,p_3q_3-1)$ and show that this agrees with the framed 2-handle described in Theorem~\ref{thm:symplectic-pants}. 

  We have exhibited the belt sphere of the yellow 2-handle of $W_{p_1,p_2,q_1,q_2}$ in Figure~\ref{fig:pantsconvert1} in green. The cocore disk 0-frames the belt sphere. To conclude we must identify this green curve in $L(-p_3^2,p_3q_3-1)$. To do this, observe that in Figure~\ref{fig:pantsconvert} we can think of this belt sphere as living in an embedded torus in $L(-p_3^2,p_3q_3-1)$ which is located between the blue and the red surgery curves; in this torus the surface framing agrees with the 0-framing. Then to identify this framed belt sphere in the (outermost) Heegaard torus for $L(-p_3^2,p_3q_3-1)$, we simply must push it past the red surgery curve, and the new framing will be the new surface framing. As in the proof of Proposition \ref{prop:feet-to-waist}, we compute 
  \[  
    \begin{pmatrix}
    1+p_1 q_1  & -q_1^2\\ 
    p_1^2 & 1-p_1q_1 
    \end{pmatrix}
    \begin{pmatrix} 
    1\\
    0\\
    \end{pmatrix}
    =
    \begin{pmatrix}
    p_1 q_1+1\\
    p_1^2\\
    \end{pmatrix}
  \]
which implies that the belt sphere is the $(p_1q_1 + 1, p_1^2)$-torus knot in $L(-p_3^2,p_3q_3-1)$ and this completes the proof.
\end{proof}

\begin{figure}[htbp]{\small
  \vspace{0.2cm}
  \begin{overpic}[tics=20]{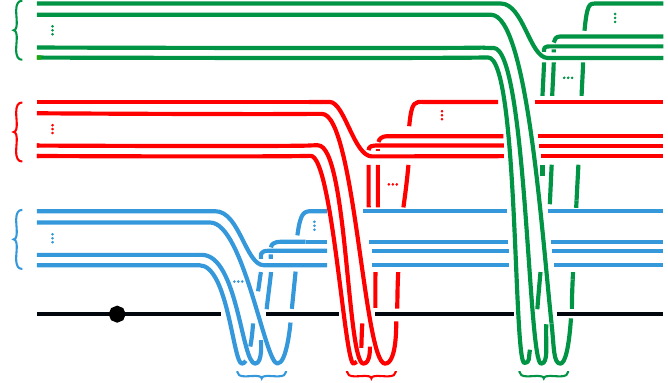}
  \put(-5,168){\color{dgreen}$q_3$}
  \put(-5,120){\color{red}$q_1$}
  \put(-5,68){\color{lblue}$q_2$}
  \put(118,-6){\color{lblue}$-p_2$}
  \put(176,-6){\color{red}$p_1$}
  \put(260,-6){\color{dgreen}$p_3$}
  \put(300,10){\normalsize$\cup$ $3$-handle, $4$-handle}
  \end{overpic}}
  \vspace{0.3cm}
  \caption{Horizontal handle decomposition of $X_{p_1,p_2,p_3}$. All three 2-handles are $-1$ framed with respect to the torus framing.}
  \label{fig:handlebody}
\end{figure}

\begin{proposition} \label{prop:final-ball}
  If the pairs of integers $(p_i,q_i)$ satisfy the equations in Proposition \ref{prop:markovarith}, then the 4-manifold presented in Figure \ref{fig:handlebody} is orientation preserving diffeomorphic to $X_{p_1,p_2,p_3}$.
\end{proposition}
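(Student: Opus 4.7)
The plan is to produce Figure~\ref{fig:handlebody} by capping off the 4-manifold $N$ depicted in Figure~\ref{fig:pantsconvert} with an upside-down copy of $B_{p_3,q_3}$. By Propositions~\ref{prop:feet-to-waist} and~\ref{prop:pants-equals-pants}, $N$ is diffeomorphic to the pants cobordism of Theorem~\ref{thm:symplectic-pants} stacked on top of $B_{p_1,q_1}\,\natural\,B_{p_2,q_2}$, with concave top boundary $L(-p_3^2, p_3q_3-1)$. This is exactly the boundary along which $B_{p_3,q_3}$ is glued in the construction of $X_{p_1,p_2,p_3}$ (Proposition~\ref{prop:pants-to-closed}), so it remains to show that the upside-down gluing of $B_{p_3,q_3}$ yields the extra green $2$-handle, $3$-handle, and $4$-handle of Figure~\ref{fig:handlebody}.

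Viewing $B_{p_3,q_3}$ upside-down, its $0$-, $1$-, and $2$-handles dualize to a $4$-handle, a $3$-handle, and a $2$-handle, respectively. The $3$- and $4$-handles attach essentially uniquely up to isotopy. The new $2$-handle attaches along the belt sphere of the original $2$-handle of $B_{p_3,q_3}$, viewed as a knot in $\partial B_{p_3,q_3} = L(p_3^2, p_3q_3-1)$ and transported to the top boundary of $N$ via the (essentially unique, by Corollary~\ref{cor:signEquiv}) orientation-reversing contactomorphism~$\tau$. By Lemma~\ref{lem:surfacetwist}, the Heegaard torus $T$ on which the blue $(p_3,q_3)$-torus knot sits in the handle diagram of Figure~\ref{fig:Bpq} survives the $-1$ torus-framed surgery, and the belt sphere of that surgery, pushed onto $T$, is a parallel copy of the $(p_3,q_3)$-torus knot carrying the surface framing. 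Because $\tau$ preserves the Heegaard-torus decomposition and acts on each solid torus by $(\theta,z)\mapsto(-\theta,\bar z)$, its image is again a $(p_3,q_3)$-torus knot on the Heegaard torus visible in Figure~\ref{fig:pantsconvert}, with framing $-1$ relative to the surface framing. Adjoining this new green $2$-handle along with the dual $3$- and $4$-handle to Figure~\ref{fig:pantsconvert} then produces exactly Figure~\ref{fig:handlebody}.

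The chief technical obstacle will be the coordinate bookkeeping required to verify that the belt sphere really becomes a $(p_3,q_3)$-torus knot (with matching signs) and that the $-1$ framing is transported correctly under~$\tau$. I would carry this out via a direct matrix computation in the style of Remark~\ref{rem:pushingpast}(3), pushing the meridian and longitude of the relevant solid torus past the red and blue surgery curves to land them on the Heegaard torus seen in Figure~\ref{fig:pantsconvert}. A useful cross-check is the manifest symmetry among the three $B_{p_i,q_i}$ in the closed manifold $X_{p_1,p_2,p_3}$: the construction of $X_{p_1,p_2,p_3}$ does not single out $B_{p_3,q_3}$, so the green curve in Figure~\ref{fig:handlebody} is forced to play the same structural role as the red and blue curves, pinning down both its torus-knot type and its framing.
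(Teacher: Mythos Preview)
Your approach matches the paper's: dualize $B_{p_3,q_3}$ and identify the resulting $2$-handle's attaching data on the Heegaard torus of $\partial N$. The paper does this in two sentences.

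There is, however, a concrete slip in your framing claim. The belt sphere of the $2$-handle of $B_{p_3,q_3}$, pushed onto the Heegaard torus of $L(p_3^2,p_3q_3-1)$, carries the torus framing $+1$, not the surface framing. This is forced by Lemma~\ref{lem:surfacetwist}: the original $2$-handle was attached at $f_T-1$, so undoing it (i.e.\ attaching the dual $2$-handle so as to recover $S^1\times S^2$) requires framing $f_T+1$. Under orientation reversal this $+1$ becomes the $-1$ you want. As written, your chain ``surface framing before $\tau$, then $-1$ after'' is internally inconsistent, since $\tau$ preserves the Heegaard torus and hence preserves the surface framing itself. The paper records exactly the correct version: the $0$-framed cocore is the $1$-framed $-p_3/q_3$ curve in $L(p_3^2,p_3q_3-1)$, hence the $(-1)$-framed $p_3/q_3$ curve once one passes to $L(-p_3^2,p_3q_3-1)$. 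You should also be careful with the sign on the curve: the attaching circle of the $2$-handle of $B_{p_3,q_3}$ is the $(-p_3,q_3)$-curve (Figure~\ref{fig:Bpq}), and it is the passage to the reversed orientation---not $\tau$ acting as $(a,b)\mapsto(-a,-b)$ on the torus, which fixes the unoriented curve---that accounts for the green curve appearing as $(p_3,q_3)$ in Figure~\ref{fig:handlebody}.
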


\begin{proof}
  We have demonstrated already that there is an orientation preserving diffeomorphism from the handle diagram $H$ in Figure~\ref{fig:pantsconvert} to the codimension 0 submanifold of $X_{p_1,p_2,p_3}$ obtained by removing the bottom rational ball, $B(p_3,q_3)$.  So it remains to demonstrate that adding the additional handles in Figure \ref{fig:handlebody} to $H$ (i.e. the green 2-handle, and the 3- and 4-handle) corresponds exactly to regluing the rational ball $B(p_3,q_3)$.

  Observe (say in Figure \ref{fig:Bpq}) that the 0-framed cocore of the 2-handle of $B(p_3,q_3)$ is the 1-framed $-p_3/q_3$ curve on the Heegaard torus for $L(p_3^2,p_3q_3-1)$ and then fill with $B^3 \times S^1$ given by the union of $3-$ and $4-$handles.  To get a handle diagram of $B(p_3,q_3)$ upside down, we must attach a $(-1)$-framed 2-handle to $L(-p_3^2,p_3q_3-1)\times I$ along the $p_3/q_3$ curve on the Heegaard torus.  Since this is exactly the way the green 2-handle is attached to $H$, we see that the 4-manifold presented in Figure \ref{fig:handlebody} is orientation preserving diffeomorphic to $X_{p_1,p_2,p_3}$.
\end{proof}

%------------------------------------------------------------------------------------------------------------------
\subsection{Symplectic embeddings of rational homology balls into \texorpdfstring{$\cp$}{CP2}}\label{subsec:sympemb}
%------------------------------------------------------------------------------------------------------------------
In Section~\ref{subsec:pants}, we constructed a closed symplectic $4$-manifold $(X_{p_1,p_2,p_3}, \omega_{p_1,p_2,p_3})$ for each Markov triple $(p_1,p_2,p_3)$ and in Section~\ref{subsec:convert}, we showed that $X_{p_1,p_2,p_3}$ admits a genus one horizontal handlebody decomposition. Let $\omega_{std}$ be the standard symplectic structure on $\cp$. We now show that it is immediate from Theorem~\ref{thm:LiscaParma} and the result of Taubes~\cite{Taubes:SWGr} that our manifold is the standard symplectic $\cp$ (after scaling the symplectic form). 

\begin{theorem}\label{thm:sympCP2}
  The  manifold $(X_{p_1,p_2,p_3},\omega_{p_1,p_2,p_3})$ is deformation equivalent to $(\cp,\omega_{std})$.
\end{theorem}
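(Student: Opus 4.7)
My plan is to first identify $X_{p_1,p_2,p_3}$ smoothly with $\cp$ by feeding the horizontal handle decomposition of Figure~\ref{fig:handlebody} into Theorem~\ref{thm:LiscaParma}, and then to invoke Taubes' uniqueness theorem for symplectic structures on $\cp$ to promote this to a deformation equivalence.

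By Propositions~\ref{prop:feet-to-waist}--\ref{prop:final-ball}, $X_{p_1,p_2,p_3}$ is orientation-preservingly diffeomorphic to the total space of the horizontal handle decomposition in Figure~\ref{fig:handlebody}: one $0$-handle, one $1$-handle, three $2$-handles attached along essential curves on a genus one Heegaard surface with framings $-1$ relative to the surface framing, one $3$-handle, and one $4$-handle. To apply Theorem~\ref{thm:LiscaParma} I must verify that the pairwise intersection numbers $x_1, x_2, x_3$ of the three attaching curves on the Heegaard torus are not all zero and satisfy $x_1^2 + x_2^2 + x_3^2 = x_1 x_2 x_3$. Reading off the attaching curves in the $(\mu_U, \lambda_U)$-basis and substituting the explicit choices $q_1 = 3 p_3 y$, $q_2 = 3 p_3 x$, $q_3 = -3 p_1 y + 3 p_2 x + 9 p_2 p_3 y$ with $p_1 x + p_2 y = 1$ from the proof of Proposition~\ref{prop:markovarith}, a short determinant calculation using only $p_1 x + p_2 y = 1$ and the Markov identity $p_1^2 + p_2^2 + p_3^2 = 3 p_1 p_2 p_3$ yields $x_2 = 3 p_2$, $x_3 = 3 p_3$, and $x_1 = 3(3 p_2 p_3 - p_1)$. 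Since $(3 p_2 p_3 - p_1,\, p_2,\, p_3)$ is the mutation of $(p_1, p_2, p_3)$ and is hence itself a Markov triple, after dividing by $9$ the required identity $x_1^2 + x_2^2 + x_3^2 = x_1 x_2 x_3$ becomes exactly the Markov relation for the mutated triple and so holds; these $x_i$ are clearly nonzero since the $p_i$ are positive integers. Theorem~\ref{thm:LiscaParma} then produces an orientation-preserving diffeomorphism $X_{p_1,p_2,p_3} \cong \cp$.

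Given such a diffeomorphism, Taubes' theorem \cite{Taubes:SWGr} implies (using $b^+(\cp) = 1$ together with $\omega^2 > 0$ and $\dim H^2(\cp; \R) = 1$, so that the cohomology class of any symplectic form is a positive multiple of the hyperplane class) that any symplectic form on $\cp$ is, after suitable rescaling, deformation equivalent to $\omega_{std}$. Transporting $\omega_{p_1,p_2,p_3}$ to $\cp$ via the diffeomorphism just obtained and rescaling then yields the desired deformation. The main obstacle is the first step: correctly identifying the three homology classes of the attaching curves on the Heegaard torus from the handle diagram, and then carrying out the arithmetic that reduces the required identity to (a mutation of) the Markov equation; once that bookkeeping is done, the remainder is essentially formal.
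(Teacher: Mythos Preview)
Your proof follows exactly the same route as the paper's: reduce to Figure~\ref{fig:handlebody} via Propositions~\ref{prop:feet-to-waist}--\ref{prop:final-ball}, verify the hypothesis of Theorem~\ref{thm:LiscaParma}, and finish with Taubes. Where the paper only asserts that the $x_i$ satisfy $x_1^2+x_2^2+x_3^2=x_1x_2x_3$ ``through a non-trivial computation, carried out with the help of Mathematica,'' you actually carry out the substitution $q_1=3p_3y$, $q_2=3p_3x$, $q_3=-3p_1y+3p_2x+9p_2p_3y$ and obtain closed forms, reducing the required identity to the Markov equation for the mutated triple $(3p_2p_3-p_1,p_2,p_3)$; this is a genuinely cleaner verification than the paper's. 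One bookkeeping caveat: with the orientations of $\gamma_1,\gamma_2,\gamma_3$ as recorded in the paper one finds $(x_1,x_2,x_3)=\bigl(3p_2,\,-3(3p_2p_3-p_1),\,-3p_3\bigr)$, which differs from your stated triple by a permutation and two sign changes; since the relation $x_1^2+x_2^2+x_3^2=x_1x_2x_3$ is symmetric and an even number of sign flips preserves the product, your conclusion is unaffected, but you should double-check your reading of the curves against Figure~\ref{fig:handlebody}.
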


\begin{proof}
  By Proposition~\ref{prop:final-ball}, we know $X_{p_1,p_2,p_3}$ is diffeomorphic to the closed $4$-manifold shown in Figure~\ref{fig:handlebody}. With the notations in Theorem~\ref{thm:LiscaParma}, we have 
  \begin{align*}
    \gamma_1 = -p_2\mu_U + q_2\lambda_U, \quad
    \gamma_2 &= p_1\mu_U + q_1\lambda_U, \quad
    \gamma_3 = p_3\mu_U + q_3\lambda_U.
  \end{align*}
  We also have
  \begin{align*}
    x_1 &= \gamma_2 \cdot \gamma_3 = p_1q_3 - p_3q_1,\\
    x_2 &= \gamma_1 \cdot \gamma_3 = -p_2q_3 - p_3q_2,\\
    x_3 &= \gamma_1 \cdot \gamma_2 = -p_2q_1-p_1q_2.
  \end{align*}
  Using Proposition~\ref{prop:markovarith} one can show (through a non-trivial computation, carried out with the help of Mathematica) that the $x_i$ satisfy the hypothesis in Theorem~\ref{thm:LiscaParma}, and so it is immediate that $X_{p_1,p_2,p_3}$ is diffeomorphic to $\cp$. According to the result of Taubes~\cite[Theorem~0.3]{Taubes:SWGr}, there exists a unique symplectic structure on $\cp$ up to symplectic deformation. Thus $(X_{p_1,p_2,p_3},\omega_{p_1,p_2,p_3})$ is symplectomorphic to $(\cp,\omega_{std})$ after scaling $\omega_{p_1,p_2,p_3}$. 
\end{proof}

\begin{remark}
  In Section~\ref{sec:toric}, we will exhibit two more proofs of Theorem~\ref{thm:sympCP2} using almost toric geometry. 
\end{remark}

\begin{proof}[Proof of Theorem~\ref{thm:main1}]
  By Proposition~\ref{prop:pants-to-closed}, we know that $(X_{p_1,p_2,p_3},\omega_{p_1,p_2,p_3})$ was built from $B_{p_3,q_3}$, which consists of convex (Weinstein) $0$-, $1$-, $2$-handles, by attaching a convex-concave $2$-handle, followed by gluing $B_{p_1,q_1}\,\natural\,B_{p_2,q_2}$ together along boundary. Here we consider $B_{p_1,q_1}\,\natural\,B_{p_2,q_2}$ as an upside down Weinstein domain. Thus $(X_{p_1,p_2,p_3},\omega_{p_1,p_2,p_3})$ consists of a convex $0$-handle, a convex $1$-handle, a convex $2$-handle, a convex-concave $2$-handle, two concave $2$-handles, three concave $3$-handles, and two concave $4$-handles. Combining with Theorem~\ref{thm:sympCP2}, this provides a symplectic handlebody decomposition of $(\cp,\omega_{std})$ in which we explicitly see embeddings of the rational homology balls $B_{p_i,q_i}$.
\end{proof}

%%%%%%%%%%%%%%%%%%%%%%%%%%%%%%%%%%%%%%%%%%%%%%%%%%%%%%%%%%%%%%%%%%%%%%%%%%%%%%%%%%%%%%%%%%%%%%%%
\section{Mutation and the almost toric geometry of \texorpdfstring{$\cp$}{CP2}}\label{sec:toric}
%%%%%%%%%%%%%%%%%%%%%%%%%%%%%%%%%%%%%%%%%%%%%%%%%%%%%%%%%%%%%%%%%%%%%%%%%%%%%%%%%%%%%%%%%%%%%%%%

In this section, we will give two additional, self-contained, proofs that $X_{p_1,p_2,p_3}$ is diffeomorphic to $\C P^2$, and hence two more proofs of Theorem~\ref{thm:main1}. The first proof is based on almost toric geometry, though does not actually use it. We begin in Section~\ref{subsec:mutate} by giving this alternate proof. 

In Section~\ref{subsec:toric}, we give an overview of almost toric pictures and discuss Vianna's \cite{Vianna:exotic, Vianna:tori} construction of infinitely many almost toric pictures of $\cp$ corresponding to Markov triples. We will give explicit handle descriptions corresponding to these almost toric pictures of $\cp$ and transferring the cut (giving yet another proof that $X_{p_1,p_2,p_3}$ is diffeomorphic to $\C P^2$) in Section~\ref{sec:atf2hbd}. In the literature the relationship between almost toric pictures and symplectic handlebodies has been studied for Weinstein domains \cite{ACGMMSW:Weinstein}, but not for closed symplectic manifolds. This handlebody description of transferring the cut is what lies behind our proof of Theorem~\ref{thm:main1} in Section~\ref{subsec:mutate}.  

Finally in Section~\ref{sec:hbd2atf}, we will see that the symplectic structure coming from our symplectic handlebody decomposition can be deformed so that one can explicitly see the Lagrangian almost toric fibration, which completes the proof of Theorem~\ref{thm:main2}. 

%-----------------------------------------------------------------------
\subsection{Mutation moves in handlebody pictures} \label{subsec:mutate}
%-----------------------------------------------------------------------
Motivated by moves in almost toric geometry, we will show that if two Markov triples $(p_1,p_2,p_3)$ and $(p_1',p_2',p_3')$ are related by a mutation (see Section~\ref{subsec:Markov}), then $X_{p_1,p_2,p_3}$ and $X_{p_1',p_2',p_3'}$ are diffeomorphic. This in turn shows that all the $X_{p_1,p_2,p_3}$ are diffeomorphic to $\C P^2$, thus giving another proof of Theorem~\ref{thm:sympCP2} independent of the work of Lisca and Parma \cite{LiscaParma:horiz2}. 

\begin{proposition}\label{prop:mutate}
  The manifolds $X_{p_1,p_2,p_3}$ and $X_{p_2,p_3, p_1'}$, constructed in Section~\ref{subsec:pants}, corresponding to two mutation-related Markov triples $(p_1, p_2, p_3)$ and $(p_2,p_3, p_1')$, where $p_1' = 3p_2p_3 - p_1$, are diffeomorphic. Also, $X_{p_1,p_2,p_3}$ and $X_{p_1,p_3, p_2'}$, corresponding to two mutation-related Markov triples $(p_1, p_2, p_3)$ and $(p_1,p_3, p_2')$, where $p_2' = 3p_1p_3-p_2$, are diffeomorphic. 
\end{proposition}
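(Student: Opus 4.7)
The plan is to realize the Markov mutation as a sequence of Kirby moves on the horizontal handle diagram of $X_{p_1,p_2,p_3}$ shown in Figure~\ref{fig:handlebody}. That diagram has a unique 0-, 1-, 3- and 4-handle together with three 2-handles attached along the curves $\gamma_1 = -p_2\mu_U + q_2\lambda_U$, $\gamma_2 = p_1\mu_U + q_1\lambda_U$, and $\gamma_3 = p_3\mu_U + q_3\lambda_U$ on the genus-one Heegaard surface $T$ of $\partial H_1$, each with framing $-1$ relative to the torus framing $f_T$. For the mutation $(p_1,p_2,p_3) \to (p_2, p_3, p_1')$ with $p_1' = 3p_2p_3 - p_1$, the plan is to keep the 2-handles along $\gamma_1$ and $\gamma_3$ fixed, and slide the 2-handle along $\gamma_2$ over the other two so that its new attaching curve represents the homology class $\pm(p_1' \mu_U + q_1' \lambda_U)$ on $T$ for some $q_1'$ satisfying Proposition~\ref{prop:markovarith} for the mutated triple; the second mutation is handled symmetrically by sliding $\gamma_1$ instead of $\gamma_2$.

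A single slide of the 2-handle along $\gamma_2$ over the one along $\gamma_j$ replaces $\gamma_2$ by its band-sum with a parallel push-off of $\gamma_j$; after a small isotopy back onto $T$, this changes the homology class by $\pm \gamma_j$ and adjusts the framing by a controlled amount determined by the $-1$ offset from $f_T$, as can be read off from Lemma~\ref{lem:surfacetwist} applied to the push-offs. Iterating, the new attaching curve can be put in any desired class $\gamma_2 + a\gamma_1 + b\gamma_3$. Since $\gcd(p_2,p_3)=1$ for any Markov triple, one may choose $a, b$ so that the $\mu_U$-coefficient of $\gamma_2 + a\gamma_1 + b\gamma_3$ is $\pm p_1'$. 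The bulk of the proof is then the arithmetic verification, using Proposition~\ref{prop:markovarith} and the Markov identity, that for an appropriate such choice of $(a,b)$ the $\lambda_U$-coefficient of the new curve is an admissible $q_1'$ for the triple $(p_2, p_3, p_1')$, and that the cumulative framing change from the slides and back-isotopies leaves the new 2-handle with framing $-1$ relative to the torus framing of the slid curve.

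The main obstacle will be this combined framing and congruence bookkeeping: while each individual slide is transparent, tracking the framing contribution from each twisted push-off while simultaneously identifying the $\lambda_U$-coefficient as a valid $q_1'$ requires careful use of the Markov identity and the defining relations for the $q_i$ from Proposition~\ref{prop:markovarith}. Once these checks succeed, a relabeling of the 2-handles identifies the resulting diagram with the horizontal handle diagram for $X_{p_2,p_3,p_1'}$ produced by Figure~\ref{fig:handlebody}, and the desired diffeomorphism follows since we have only used Kirby moves. This handle-slide picture is the handlebody-level incarnation of the almost-toric ``transfer the cut'' operation made explicit in Section~\ref{sec:atf2hbd}.
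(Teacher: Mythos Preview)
Your strategy and the paper's both manipulate the horizontal diagram, but the mechanisms differ and your proposal defers exactly the step that does the work.

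The paper does not perform iterated handle slides on the closed diagram of Figure~\ref{fig:handlebody}. It passes instead to the complement $Z_{p_1,p_2,q_1,q_2}$ of $B_{p_3,q_3}$, which carries only the two $2$-handles along $\gamma_1$ and $\gamma_2$, and proves Lemma~\ref{lem:mutate}: this $Z$ is diffeomorphic to the corresponding $Z$ for the mutated triple. For one mutation this is a single \emph{push-past}: one isotopes $K_{p_2,-q_2}$ across the Heegaard level carrying $K_{p_1,q_1}$, and by Lemma~\ref{lem:surfacetwist} this simply applies the Dehn-twist matrix $A_{(p_1,q_1)}$ once. For the other mutation the paper first applies an explicit involution $\phi(\theta,x,y,z)=(\theta,-x,y,-z)$ of $S^1\times D^3$, which reverses the order of the two nested tori and flips the sign of each $q$-coordinate, and then does one push-past. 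In either case a two-line matrix computation using Proposition~\ref{prop:markovarith} identifies the new curve as exactly $K_{p_1',q_1'}$ (resp.\ $K_{p_2',q_2'}$). Because a push-past is an isotopy of the attaching curve in the intermediate surgered boundary, the curve stays on a Heegaard torus with framing $f_T-1$ throughout; the framing bookkeeping you flag as the main obstacle simply does not arise.

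Your plan of band-summing $\gamma_2$ with $a$ copies of $\gamma_1$ and $b$ copies of $\gamma_3$ is both less controlled and more than is needed. The natural solution to your $\mu_U$-coefficient equation has $b=0$ and $a=3p_3$ (since $p_1-3p_3p_2=-p_1'$), so slides over $\gamma_3$ are never required; and $3p_3=|\gamma_1\cdot\gamma_2|=|p_1q_2+p_2q_1|$ slides over $\gamma_1$ is precisely one push-past. Even granting that the framing works out, your final ``relabeling'' is not enough: after your moves the three curves are $(-p_2,q_2)$, $(\mp p_1',\pm q_1')$, $(p_3,q_3)$ on the original nested tori, whereas the normal form of Figure~\ref{fig:handlebody} for $X_{p_2,p_3,p_1'}$ requires curves of type $(-p_3,\cdot)$, $(p_2,\cdot)$, $(p_1',\cdot)$ in that order. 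Matching the signs and the ordering on the nested Heegaard tori needs a global diffeomorphism of $S^1\times D^3$ such as the paper's $\phi$, not merely a permutation of labels.
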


We will first show that Proposition~\ref{prop:mutate} implies Theorem~\ref{thm:sympCP2}, independent of Section~\ref{sec:top}.

\begin{proof}[Proof of Theorem~\ref{thm:sympCP2}]
  Consider $X_{1,1,1}$. We first show that it is diffeomorphic to $\cp$. We are using $x=1$ and $y=0$ in the proof of Proposition~\ref{prop:markovarith} and obtain $q_1 = 0$ and $q_2 = q_3 = 3$. Thus the three rational homology balls used in the construction of $X_{1,1,1}$ are $B_{1,0}, B_{1,3},$ and $B_{1,3}$. Each of these is diffeomorphic to $B^4$ since in the handle description of $B_{p,q}$ in Figure~\ref{fig:BpqWeinstein} the $2$-handle will cancel the $1$-handle. Following the construction from Section~\ref{subsec:cap}, we attach a $2$-handle along $(1,-1)$-torus knot in $L(1,2) = \bd B_{1,3} \cong \bd B^4 = S^3$ with the torus framing. We can see that it is an unknot in $S^3$ with framing $+1$ with respect to the Seifert framing. After that, we attach the upside down handlebody $B_{1,1}\,\natural\,B_{1,3} \cong B^4$ and clearly the resulting manifold is $\cp$.
  
  Notice that for the two mutations, $(p_2,p_3,p_1')$ is a left child of $(p_1,p_2,p_3)$ and $(p_1,p_3,p_2')$ is a right child of $(p_1,p_2,p_3)$ in the Markov tree as shown in Figure~\ref{fig:markov-tree}. Thus for any $X_{p_1,p_2,p_3}$, we have a sequence of mutations that relates the Markov triples $(p_1,p_2,p_3)$ and $(1,1,1)$. By repeated application of Proposition~\ref{prop:mutate}, we see that $X_{p_1,p_2,p_3}$ is diffeomorphic to $\cp$. Then, as in the proof of Theorem~\ref{thm:main1}, by the result of Taubes~\cite[Theorem~0.3]{Taubes:SWGr}, it follows that $X_{p_1,p_2,p_3}$ is symplectomorphic to $\cp$ after scaling the symplectic form.
\end{proof}

Proposition~\ref{prop:mutate} will follow from the next lemma. Recall the pants cobordism $W_{p_1,q_1,p_2,q_2}$ defined in Section~\ref{subsec:convert}. We define $Z_{p_1,q_1,p_2,q_2}$ to be the union of $W_{p_1,q_1,p_2,q_2}$, $B_{p_1,q_1}$ and $B_{p_2,q_2}$ as described in Figure~\ref{fig:pantsconvert}.

\begin{lemma}\label{lem:mutate}
  Let $p_1, p_2, p_1', p_2' ,p_3$ be as in the statement of Proposition~\ref{prop:mutate} and $q_1,q_2$ be defined as in Proposition~\ref{prop:markovarith} for $(p_1,p_2,p_3)$. Let $q_1'$ be the appropriate number for the embedding corresponding to the $(p_2,p_3,p_1')$ triple, coming from Proposition~\ref{prop:markovarith}. Define $q_2'$ similarly. Then, the three smooth manifolds $Z_{p_1,p_2,q_1,q_2}$, $Z_{p_2',p_1,q_2',q_1}$ and $Z_{p_1',p_2,q_1',-q_2}$ are diffeomorphic.
\end{lemma}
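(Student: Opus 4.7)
The plan is to prove the two claimed pairwise diffeomorphisms (which together give the three-way diffeomorphism) by explicit Kirby calculus starting from the handle diagram of $Z_{p_1,p_2,q_1,q_2}$ drawn in Figure~\ref{fig:pantsconvert}: a single dotted circle together with a red $(p_1,q_1)$-torus knot and a blue $(-p_2,q_2)$-torus knot sitting on a Heegaard torus of the boundary of the $0$- and $1$-handle subhandlebody, each attached as a $2$-handle with framing $-1$ relative to the torus framing. I would also draw the handle diagrams of $Z_{p_2',p_1,q_2',q_1}$ and $Z_{p_1',p_2,q_1',-q_2}$ in the same normal form, so that the diffeomorphism problem becomes one of matching handle diagrams through Kirby moves.

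The main step is motivated by the ``transfer-the-cut'' operation in almost toric geometry (discussed later in Section~\ref{subsec:toric}), which swaps the rational ball at one node of an almost toric fibration for the rational ball at the mutated node while fixing the other two nodes. At the handle level, I expect this swap to be realized by sliding one of the two torus knot $2$-handles over the other a prescribed number of times, followed by an $SL(2,\Z)$ change of basis on the Heegaard torus. To produce the diffeomorphism $Z_{p_1,p_2,q_1,q_2}\cong Z_{p_1',p_2,q_1',-q_2}$, I would apply this procedure to the red curve: after the slides and basis change, the red curve should be recognizable as a $(p_1',q_1')$-torus knot with the same $-1$-relative-torus framing, and the blue curve should become the $(-p_2,-q_2)$-torus knot in the new coordinates, accounting for the sign flip in the fourth subscript. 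The parallel argument carried out on the blue curve yields $Z_{p_1,p_2,q_1,q_2}\cong Z_{p_2',p_1,q_2',q_1}$, this time with no sign flip because of the different normalization conventions of Proposition~\ref{prop:markovarith} after reordering the mutated triple.

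The main obstacle I anticipate is the arithmetic bookkeeping. After the Kirby moves one must verify that the slopes and framings produced by the $SL(2,\Z)$ computation (carried out in the style of Remark~\ref{rem:pushingpast} and the proof of Proposition~\ref{prop:feet-to-waist}) agree exactly with those prescribed by Proposition~\ref{prop:markovarith} for the mutated Markov triple. This reduces to algebraic identities involving the defining relation $p_1^2+p_2^2+p_3^2=3p_1p_2p_3$ together with the relations in Proposition~\ref{prop:markovarith} and the choice of auxiliary integers $x,y$ in its proof. It is this step that should force the seemingly asymmetric sign convention $q_2 \mapsto -q_2$, and as with the computation in the proof of Theorem~\ref{thm:sympCP2}, it is likely easiest to verify with computer assistance.
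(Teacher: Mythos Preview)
Your proposal is correct and follows essentially the same strategy as the paper: an explicit involution of $S^1\times D^3$ (playing the role of your change of basis) reverses the order of the two Heegaard tori and flips one sign, after which one torus-knot attaching circle is pushed past the other using the Dehn-twist matrix of Remark~\ref{rem:pushingpast}. The arithmetic check that the resulting slope equals $(p_1',q_1')$ is short and done by hand in the paper---the key identity $q_1'=3q_2p_3+q_1$ drops out of the explicit formulas for the $q_i$ in the proof of Proposition~\ref{prop:markovarith}---so computer assistance is not needed.
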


\begin{proof}
  We first show that $Z_{p_1,p_2,q_1,q_2}$ and $Z_{p_1',p_2,q_1',-q_2}$ are diffeomorphic. Denote a $(p, q)$ torus knot in $\partial(S^1 \times D^3)$ by $K_{p,q}$. Consider the coordinates on $S^1 \times D^3$ to be $(\theta, x, y, z)$. Then $\partial(S^1 \times D^3) = \{(\theta, x, y, z) \mid x^2 + y^2 + z^2 = 1\}$. We can interpret these coordinates in Figure~\ref{fig:pantsconvert}, without loss of generality, as follows: $\theta$ is along the direction of the dotted circle, and $x$ points out of the page. Then Figure~\ref{fig:pantsconvert} represents that $K_{p_2,-q_2}$ lives on a Heegaard torus $$T_2 \coloneqq \{(\theta, x_2, y, z) \mid y^2 + z^2 = 1 - x_2^2\}$$ and $K_{p_1,q_1}$ lives on a Heegaard torus $$T_1 \coloneqq \{(\theta, x_1, y, z) \mid y^2 + z^2 = 1 - x_1^2\}$$ such that $x_2 < x_1$. Consider the self-diffeomorphism $\phi: S^1 \times D^3 \to S^1 \times D^3$ given by $\phi(\theta, x, y, z) = (\theta, -x, y, -z)$. Then, $\phi(K_{p_2,-q_2}) = K_{p_2,q_2}$. Similarly, $\phi(K_{p_1,q_1}) = K_{p_1,-q_1}$. However the order of the $x$-coordinates of the Heegaard tori they live on have been flipped. To get the knots in the same order as the manifold in Figure~\ref{fig:handlebody}, $\phi(K_{p_1,q_1}) = K_{p_1,-q_1}$ needs to be slid past the surgery on $\phi(K_{p_2,-q_2}) = K_{p_2,q_2}$.

  Recall, Lemma~\ref{lem:surfacetwist} says, a $(-1)$-surgery on a knot on the Heegaard torus is the same as cutting and regluing by a $(+1)$-Dehn twist along the knot. So pushing  $\phi(K_{p_1,q_1}) = K_{p_1,-q_1}$ past the Heegaard torus on which $\phi(K_{p_2,-q_2}) = K_{p_2,q_2}$ sits will result in the following torus knot $(p_1',q_1')$ as the following computation, which heavily uses Proposition~\ref{prop:markovarith} , shows:
  \begin{align*}
    \begin{pmatrix} 
      1+p_2q_2&-q_2^2 \\ 
      p_2^2&1-p_2q_2  
    \end{pmatrix} 
    \begin{pmatrix} -q_1\\ p_1  \end{pmatrix} 
    &= \begin{pmatrix} -p_1q_2^2 - q_1 - p_2q_2q_1 \\p_1 - p_2q_2p_1 - p_2^2q_1  \end{pmatrix}\\
    &= - \begin{pmatrix}  3q_2p_3+q_1\\ 3p_2p_3 - p_1 \end{pmatrix}\\
    &=  - \begin{pmatrix} q_1' \\ p_1' \end{pmatrix}
  \end{align*}
  The only computation above that does not directly follow from the definition of mutation and Proposition~\ref{prop:markovarith} is that $3q_2p_3+q_1=q_1'$. Recall that the $p_i$'s and $q_i$'s satisfy the conditions in Proposition~\ref{prop:markovarith}. Note that $q_1'=2p_3 y'$ where $y'=3p_3x+y$ since $p_1'(-x) + p_2y' = p_1'(-x) + p_2(3p_3x+y) = (3p_2p_3 - p_1)(-x) + 3p_2p_3x + p_2y = 1$ and so
  \begin{align*}
    q_1' &= 3p_3y'\\
    &=3p_3(3p_3x + y)\\
    &=3q_2p_3 + q_1
  \end{align*}
  Thus verifying that the conditions in Proposition~\ref{prop:markovarith} are satisfied, it follows that the manifold obtained after the diffeomorphism is exactly $Z_{p_1',p_2,q_1',-q_2}$.
 
  To prove the diffeomorphism between $Z_{p_1,p_2,q_1,q_2}$ and $Z_{p_2',p_1,q_2',q_1}$, we need to slide the knot $K_{p_2,-q_2}$ in $Z_{p_1,p_2,q_1,q_2}$ past the surgery on $K_{p_1,q_1}$. This results in exactly the $(p_2',q_2')$ torus knot, by doing the same matrix arguments as above but interchanging $p_1$ and $p_2$, and $q_1$ and $q_2$. The resultant manifold is then $Z_{p_2',p_1,q_2',q_1}$. 
\end{proof}

\begin{proof}[Proof of Proposition~\ref{prop:mutate}]
Notice that $Z_{p_1,p_2,q_1,q_2}$, $Z_{p_2',p_1,q_2',q_1}$, and $Z_{p_1',p_2,q_1',-q_2}$, all have boundary $L(p_3^2,-p_3q_3+1)$. Thus gluing in the homology ball $B_{p_3,q_3}$ bounded by $L(p_3^2,p_3q_3-1)$ extends the diffeomorphisms and identifies $X_{p_1,p_2,p_3}, X_{p_1,p_3, p'_2}, X_{p_2,p_3, p'_1}$. 
\end{proof}

%-------------------------------------------------------------------------------------
\subsection{Almost toric pictures of \texorpdfstring{$\cp$}{CP2}}\label{subsec:toric}
%-------------------------------------------------------------------------------------
Symington \cite{Symington:four} introduced {\it almost toric manifolds} as a way to "see" certain symplectic manifolds as their images under a moment map; this generalized the pre-existing notion of toric manifolds. The idea is to define a Lagrangian fibration structure on a symplectic manifold $(M, \omega)$ where a regular fiber is a Lagrangian torus, some of the fibers are pinched tori referred to as {\em nodal singularities}. 
More precisely:

\begin{definition}(Vianna \cite[Definition~2.9]{Vianna:exotic})
  An {\em almost toric fibration} of a symplectic four manifold
  $(M, \omega)$ is a Lagrangian fibration $\pi: (M, \omega) \to B$ such that any point of
  $(M, \omega)$ has a Darboux neighborhood (with symplectic form $dx_1 \wedge dy_1 + dx_2 \wedge dy_2$) in which the map $\pi$ has one of the following forms:
  \begin{align*}
    &\pi(x,y) = (x_1,x_2), &\text{regular point}\\
    &\pi(x,y) = (x_1,x_1^2 + x_2^2), &\text{elliptic, corank one}\\
    &\pi(x,y) = (x_1^2+x_2^2,x_2^2+y_2^2), &\text{elliptic, corank two}\\
    &\pi(x,y) = (x_1y_1+x_2y_2,x_1y_2-x_2y_1), &\text{nodal or focus-focus}
  \end{align*}
  with respect to some choice of coordinates near the image point in $B$. An
  {\em almost toric manifold} is a symplectic manifold equipped with an almost toric fibration. A {\em toric fibration} is a Lagrangian fibration induced by an effective Hamiltonian torus action.
\end{definition}

We call the image of each nodal singularity a {\it node}. We will now discuss how to reconstruct the symplectic manifold from the base $B$. Figure~\ref{fig:atf-markov} is an example of $B$ for an almost toric fibration, ignore the blue curves for now. 

\begin{figure}[htbp]{\scriptsize
  \vspace{0.2cm}
  \begin{overpic}[tics=20]{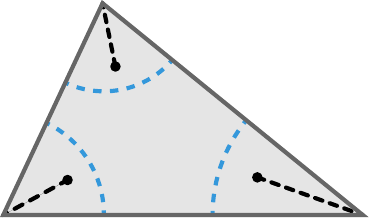}
  \put(85,80){\color{lblue}{$L(p_3^2, p_3q_3 - 1)$}}
  \put(124,48){\color{lblue}{$L(p_1^2, p_1q_1 - 1)$}}
  \put(-32,45){\color{lblue}{$L(p_2^2, p_2q_2 - 1)$}}
  \end{overpic}}
  \vspace{0.3cm}
  \caption{An almost toric picture of $\cp$ corresponding to the Markov triple $(p_1, p_2, p_3)$.}
  \label{fig:atf-markov}
\end{figure}

The pre-image of a regular point in the interior of the triangle is a Lagrangian torus, while the pre-image of a point in the interior of an edge is an isotropic circle (these are elliptic, corank one points). As one approaches a point on the interior of an edge from the interior of the polytope all circles of a fixed slope in the torus collapses to leave the circle above the edge. The circle that collapses is given by the integral normal vector to the line.  The pre-image of a vertex that does not touch a dotted line is a point (these are elliptic, corank two points), the preimage of a vertex that does touch a dotted line is a circle and the pre-image of a node is a pinched torus. The dotted lines from the nodes to the vertices encode the slope of the curve on a regular torus fiber that collapses in the singular fiber corresponding to that node.  This dotted line is called an {\em eigenline}. Its eigendirection  $\pm(a,b)$ encodes the monodromy $A_{(a,b)}~=~\begin{pmatrix} 1-ab &a^2\\ -b^2 &1+ab \end{pmatrix}$ in the affine structure of the base when going around the node -- the eigenline, as the name suggests, is invariant under the monodromy, while the slopes of the boundary on either side of the point where the eigenline hits the boundary, should be related by counterclockwise rotation given by the monodromy. This means that the slope on the right, after being rotated by $A_{(a,b)}$, should match the slope on the left. More specifically, consider the toric diagram and cut the manifold along the pre-image of the dotted line, and then reglue the fibers by the affine transformation $A_{(a,b)}$. This will make a small neighborhood of a corner with a dotted line into an $S^1\times D^3$ since the dotted line cuts the neighborhood into two $S^1\times D^3$ and when crossing the dotted line these two $S^1\times D^3$'s are glued along an $S^1\times D^3$ in their boundaries so that the $T^2$ fibration is preserved. When that neighborhood is expanded to contain the node, one attaches a $2$-handle with framing $-1$ less than the torus framing to the $(a,b)$ sloped curve sitting on a torus in $S^1\times S^2$. Thus we see a neighborhood of a dotted line is a rational homology ball with boundary a lens space. 

{\bf Toric Moves.} We discuss three moves that can be done to an almost toric diagram without changing the manifold. With these three moves we can reproduce Vianna's embeddings of rational homology balls associated to a Markov triple into $\cp$. 

\noindent
{\bf Nodal trade.}  In Figure~\ref{fig:nt-ns} we see the standard toric diagram for a Darboux ball $B^4$ and an almost toric picture that can easily be seen to be $B^4$ as well and can also be shown to be a Darboux ball as well. A {\em nodal trade} is the operation of exchanging one picture for the other. See \cite[Section 6]{Symington:four} for more details. 

\begin{figure}[htbp]{\scriptsize
  \vspace{0.2cm}
  \begin{overpic}[tics=20]{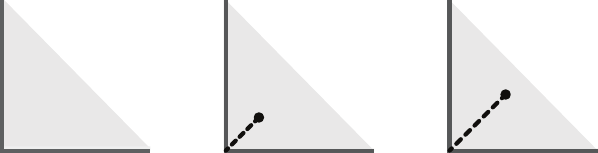}
  \end{overpic}}
  \vspace{0.3cm}
  \caption{On the left is the standard toric picture for a Darboux ball. In the middle is an almost toric picture for the Darboux ball. A nodal trade exchanges one of these pictures for the other. Going between the middle and the right hand picture is a nodal slide. }
  \label{fig:nt-ns}
\end{figure}

\noindent
{\bf Nodal slide.} A {\em nodal slide} simply lengthens or shortens an eigenline. See \cite[Section 6]{Symington:four} for more details. 

\noindent
{\bf Transfer the cut.} The description here follows \cite{Symington:four, Vianna:tori}, and the reader should refer there for more details. Recall a node in the base diagram of an almost toric diagram has an associated eigenline in some eigendirection $(a,b)$, this is the dotted line in the diagram. Notice that there are two line segments leaving the node $x$ in the direction $(a,b)$, the original eigenline $E$ and a line segment $L$ on the opposite side of $x$. 

\begin{figure}[htbp]{\scriptsize
  \vspace{0.2cm}
  \begin{overpic}[tics=20]{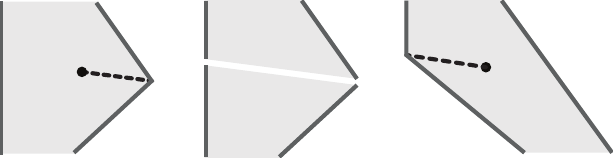}
  \end{overpic}}
  \vspace{0.3cm}
  \caption{Transfer the cut. On the left we see a portion of an almost toric diagram. In the middle we have cut the diagram along the eigendirection for the node. On the right we have reglued the two pieces so that the eigenline now leave the opposite side of the node.}
  \label{fig:tcut}
\end{figure}

One can cut $B$ along $E\cup L$, and apply an affine transformation to one of the pieces so that the vertex that $E$ touched becomes the interior point of an edge and $L$ will now be an eigenline connecting the node $x$ to a corner in the new base diagram. See Figure~\ref{fig:tcut}. 

We can now start with the standard toric picture for $\cp$, see the upper left in Figure~\ref{fig:atf-slide}. We can now perform nodal trades at each corner point to obtain an almost toric diagram for $\cp$.  

Near each corner, one can see $S^3$ as the preimage of the boundary of a neighborhood of the corner, thus this is the almost toric picture corresponding to the Markov triple $(1,1,1)$. One can then perform an operation called  {transferring the cut} (and nodal slide). 

\begin{figure}[htbp]{\scriptsize
  \vspace{0.2cm}
  \begin{overpic}[tics=20]{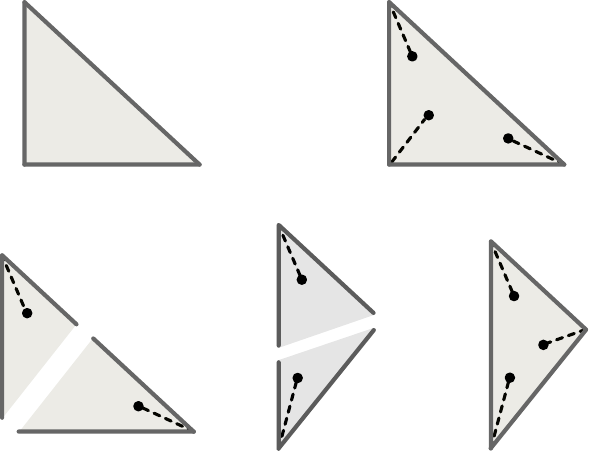}
  \end{overpic}}
  \vspace{0.3cm}
  \caption{In the upper left is the standard toric picture of $\cp$. In the upper right we have performed nodal trades to get an almost toric diagram for $\cp$ for the Markov triple $(1,1,1)$ (a neighborhood of each dotted line is $B^4$). In the bottom left we cut along the diagram along eigenline for the bottom left node. In the middle figure we applied the monodromy to the bottom piece of the diagram. On the bottom right we see the result of the transferring the cut and nodal slide, which gives the mutated Markov triple (1,1,2).}
  \label{fig:atf-slide}
\end{figure}

We show this in Figure~\ref{fig:atf-slide}, this changes the almost toric picture to one where one of the corners now represents $B_{2,1}$, with boundary $L(4,1)$ --- this corresponds to the Markov triple $(1,1,2)$. In general, this procedure allows one to build an almost toric picture of $\cp$ corresponding to any Markov triple $(p_1, p_2, p_3)$ --- this follows from the fact that all Markov triples can be obtained via mutations from $(1,1,1)$. Further, as shown in Figure~\ref{fig:atf-markov}, this picture also encodes the embedding of $\sqcup_{i=1}^3 B_{p_i. q_i}$ into $\cp$. In the next subsection we will see how to draw handlebody decompositions associated to almost toric pictures and give a handlebody interpretation of transferring the cut and see that the proof of Lemma~\ref{lem:mutate} is simply transferring the cut. 

%--------------------------------------------------------------------------------------
\subsection{From almost toric pictures to handlebody decompositions.}\label{sec:atf2hbd}
%--------------------------------------------------------------------------------------
We will first understand a handle decomposition of the complement of a rational homology ball associated to a nodal singularity.
\begin{proposition}\label{complement}
  Consider $\cp$ with the almost toric structure corresponding to the Markov triple $(p_1,p_2,p_3)$ shown in Figure~\ref{fig:atf-markov}. The complement of the rational homology ball $B_{p_3,q_3}$ has a handle decomposition given in Figure~\ref{fig:pantsconvert1} and hence also Figure~\ref{fig:pantsconvert}.
\end{proposition}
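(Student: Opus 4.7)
My plan is to proceed by induction on the depth of the Markov triple $(p_1,p_2,p_3)$ in the Markov tree of Figure~\ref{fig:markov-tree}, using the three almost toric moves (nodal trade, nodal slide, transfer the cut) recalled in Section~\ref{subsec:toric} together with the already-established handle move from Lemma~\ref{lem:mutate}.

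\emph{Base case.} When $(p_1,p_2,p_3) = (1,1,1)$ the almost toric diagram of $\cp$ is the one obtained from the standard moment triangle by three nodal trades (upper right of Figure~\ref{fig:atf-slide}). Each ball $B_{1,q_i}$ is a Darboux $4$-ball, so the complement of $B_{1,q_3}$ in $\cp$ is $\cp \setminus B^4$. I would then verify by direct Kirby calculus that, with $p_1 = p_2 = 1$ and the corresponding $q_i$'s prescribed by Proposition~\ref{prop:markovarith}, the diagram of Figure~\ref{fig:pantsconvert1} collapses (after handle cancellations of the two $-1$-framed unknots with the $1$-handle) to a single $+1$-framed unknot sitting in $S^3$, which is exactly a handle picture for $\cp \setminus B^4$.

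\emph{Inductive step.} Assume the proposition is known for $(p_1,p_2,p_3)$ with the integers $(q_1,q_2,q_3)$ of Proposition~\ref{prop:markovarith}, and consider the mutant triple $(p_2,p_3,p_1')$ with $p_1' = 3p_2 p_3 - p_1$. On the base diagram this mutation is realized by transferring the cut at the $B_{p_1,q_1}$--node together with a nodal slide: the corner corresponding to $B_{p_1,q_1}$ is removed and a new corner corresponding to $B_{p_1',q_1'}$ appears, while the other two corners are unchanged. I would argue that on the handle diagram of the complement produced by the inductive hypothesis, this geometric operation corresponds precisely to the handle slide of Lemma~\ref{lem:mutate}, namely, sliding the attaching circle of $B_{p_1,q_1}$ past the surgery on $B_{p_2,q_2}$ (or equivalently applying the self-diffeomorphism $\phi$ of $S^1 \times D^3$ used there) along the Heegaard torus of $L(p_3^2,p_3q_3-1)$. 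Granting that identification, the inductive hypothesis for $(p_1,p_2,p_3)$ converts directly into the claim for $(p_2,p_3,p_1')$, and the mutation in the other direction is symmetric.

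\emph{Main obstacle.} The real content of the proof is establishing the correspondence between ``transfer the cut'' on the base and this concrete Kirby move on the complement. Concretely I need to check three compatibilities: (i) the affine monodromy $A_{(p_1,q_1)} = \bigl(\begin{smallmatrix} 1-p_1 q_1 & p_1^2 \\ -q_1^2 & 1+p_1 q_1 \end{smallmatrix}\bigr)$ around the node coincides, after the natural identification of the base coordinates with a basis of the Heegaard torus of $L(p_3^2, p_3 q_3 - 1)$, with the self-diffeomorphism induced by the Dehn twist along the $(p_1,q_1)$ torus knot; (ii) the $-1$ framing relative to the torus framing that characterizes the rational homology ball attachments $B_{p,q}$ is preserved under this handle slide; and (iii) the new torus-knot data $(p_1',q_1')$ produced by $A_{(p_1,q_1)}$ matches the data prescribed by Proposition~\ref{prop:markovarith} for the mutated triple. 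Item (i) follows from Lemma~\ref{lem:surfacetwist} together with the standard fact that the monodromy of an almost toric fibration around a nodal fiber is realized by a Dehn twist on a nearby torus fiber; item (ii) is automatic since surface framings are intrinsic to the Heegaard torus; and item (iii) is exactly the matrix computation already carried out inside the proof of Lemma~\ref{lem:mutate}. With (i)--(iii) in hand, the inductive step is immediate, and combined with the base case this establishes the proposition.
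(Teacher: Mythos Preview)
Your approach is genuinely different from the paper's, and it has a real gap in the inductive step.

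\textbf{What the paper does.} The paper gives a direct, non-inductive argument. In the almost toric base for $(p_1,p_2,p_3)$ one sees that the complement of $B_{p_3,q_3}$ is $B_{p_1,q_1}\cup H\cup B_{p_2,q_2}$, where $H$ is the region sitting over a strip in the base between the $p_1$- and $p_2$-corners. The fibration makes $H$ an $S^1\times D^3$ attached along $S^0\times S^1\times D^2$, i.e.\ a \emph{round $1$-handle}, glued along the rational unknots in the two lens spaces with the torus framing. Decomposing a round $1$-handle as an ordinary $1$-handle followed by a $2$-handle (attached to the connect sum of the two rational unknots with framing~$0$) yields precisely Figure~\ref{fig:pantsconvert1}. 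No induction, no mutation, no Lemma~\ref{lem:mutate}.

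\textbf{Why your inductive step does not close.} Your inductive hypothesis is about the complement of the \emph{third} ball $B_{p_3,q_3}$, which is $Z_{p_1,p_2,q_1,q_2}$ with boundary $L(p_3^2,p_3q_3-1)$. The handle slide of Lemma~\ref{lem:mutate} (equivalently, transferring the cut at the $p_1$-node) rewrites this \emph{same} complement as $Z_{p_1',p_2,q_1',-q_2}$; it still has boundary $L(p_3^2,p_3q_3-1)$. But the proposition for the mutant triple $(p_2,p_3,p_1')$ asks for the complement of the new largest ball $B_{p_1',q_1'}$, which is a \emph{different} submanifold of $\cp$ with boundary $L(p_1'^2,p_1'q_1'-1)$ and should be described by Figure~\ref{fig:pantsconvert1} with $(p_2,p_3)$-data, not $(p_1',p_2)$-data. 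Lemma~\ref{lem:mutate} simply does not produce that manifold; it never changes which ball you are complementing. So ``the inductive hypothesis for $(p_1,p_2,p_3)$ converts directly into the claim for $(p_2,p_3,p_1')$'' is false as stated.

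There is also a circularity concern in your item~(i): identifying the effect of transfer-the-cut on the handle picture presupposes that the complement already has the form of Figure~\ref{fig:pantsconvert1}, which is the statement you are proving. In the paper, the discussion at the end of Section~\ref{sec:atf2hbd} that relates transferring the cut to the handle slide of Lemma~\ref{lem:mutate} explicitly \emph{uses} Proposition~\ref{complement}; it is presented as a consequence, not an input. If you want to salvage an inductive scheme, you would need an independent identification of the complement of the \emph{new} ball $B_{p_1',q_1'}$---for instance by strengthening the hypothesis to cover all three complements simultaneously and supplying a separate argument for the newly created corner---but that is substantially more than what you have outlined. The round $1$-handle argument avoids all of this.
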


\begin{proof}
  As discussed in the previous section, the rational homology ball associated to each nodal singularity has a handle decomposition with one handle in each index $0$, $1$, and $2$. Moreover the $2$-handle is attached to a $(p_i,q_i)$ torus knot in $S^1\times S^2$. So these are precisely the $B_{p_i, q_i}$ from Section~\ref{subsec:rhb}. Thus we understand handle decompositions of the the parts of $\cp$ above the regions separated off of the almost toric diagram by the dotted blue curves in Figure~\ref{fig:atf-round}.
  \begin{figure}[htbp]{\scriptsize
    \vspace{0.2cm}
    \begin{overpic}[tics=20]{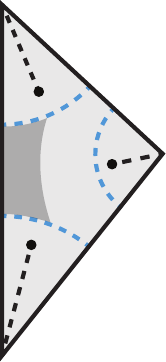}
    \end{overpic}}
    \vspace{0.3cm}
    \caption{The round 1-handle seen in the almost toric picture.}
    \label{fig:atf-round}
  \end{figure}
  If one adds the region $H$ above the dark grey portion in Figure~\ref{fig:atf-round} to the two rational homology balls that it touches, then the result will be diffeomorphic to the complement of the third rational homology ball. 

Notice that $H$ is $[0,1]\times S^1\times D^2$ and it is attached to the two rational homology balls by gluing $\{0\}\times S^1\times D^2$ to one of the homology balls and gluing $\{1\}\times S^1\times D^2$ to the other homology ball. This is called a \dfn{round $1$-handle}. It is easy to see that the circles to which the round $1$-handle is attached consist of rational unknots in the two lens spaces (that is cores of Heegaard tori for the lens spaces) and that the framing on each is the zero framing (the toric structure frames the rational unknots and the attaching regions of the handle). Now, a round $1$-handle can be decomposed into a standard $1$-handle and a standard $2$-handle. The $1$-handle is attached to points on each of the rational unknots and cancels one of the $0$-handles of one of the rational homology balls. This gives Figure~\ref{fig:pantsconvert1} without the yellow curve. Notice that after the $1$-handle is attached, the $2$-handle will be attached to the connect sum of the rational unknots, and this is exactly the yellow curve in Figure~\ref{fig:pantsconvert1}. Moreover, since the round $1$-handle is attached to the neighborhoods of attaching circles using the zero framing on each, we see  
%the zero framings of the attaching regions give us the fact 
that the yellow $2$-handle should have framing $0$. That is Figure~\ref{fig:pantsconvert1} indeed does describe the complement of one of the rational homology balls as claimed. 
\end{proof}

We now have a third proof of Theorem~\ref{thm:sympCP2}, and Theorem~\ref{thm:main2}, that the manifold $X_{p_1,p_2,p_3}$ we constructed in Section~\ref{subsec:pants} are diffeomorphic to $\cp$.

\begin{corollary}\label{3rdp}
  The manifolds $X_{p_1,p_2,p_3}$ constructed in Section~\ref{subsec:pants} are diffeomorphic to $\cp$.
\end{corollary}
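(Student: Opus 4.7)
The statement is labeled as a corollary, and indeed the plan is essentially to observe that Proposition \ref{complement} and Proposition \ref{prop:final-ball} together hand us the conclusion with almost no additional work. The strategy is to use the almost toric picture of $\cp$ associated to the Markov triple $(p_1,p_2,p_3)$ as a bridge between the two handle descriptions we have: the one coming from nodal neighborhoods and the one coming from the symplectic pants construction.

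First I would fix the almost toric fibration of $\cp$ associated to the Markov triple $(p_1,p_2,p_3)$ described in Section~\ref{subsec:toric}, which exhibits an embedding $\sqcup_{i=1}^3 B_{p_i,q_i}\hookrightarrow \cp$. By Proposition~\ref{complement}, the complement $\cp\setminus B_{p_3,q_3}$ is diffeomorphic to the 4-manifold whose handle diagram appears in Figure~\ref{fig:pantsconvert} (equivalently Figure~\ref{fig:pantsconvert1}). Regluing $B_{p_3,q_3}$ to this complement reproduces $\cp$ on the nose, and the resulting closed 4-manifold is precisely the one depicted in Figure~\ref{fig:handlebody}, because attaching $B_{p_3,q_3}$ upside-down corresponds exactly to adding the green $(-1)$-framed $(p_3,q_3)$-torus-knot 2-handle together with a 3- and 4-handle, as explained in the proof of Proposition~\ref{prop:final-ball}.

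Now I would invoke Proposition~\ref{prop:final-ball} directly: that proposition asserts that the 4-manifold shown in Figure~\ref{fig:handlebody} is orientation-preserving diffeomorphic to $X_{p_1,p_2,p_3}$. Composing the two identifications gives an orientation-preserving diffeomorphism $\cp\cong X_{p_1,p_2,p_3}$, which is exactly the claim. The argument is essentially a bookkeeping exercise at this point: both sides arise by gluing $B_{p_3,q_3}$ to the same handle diagram, so no further computation is needed.

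I do not anticipate any real obstacle in this proof; the content was already concentrated in Proposition~\ref{complement}, whose justification required the translation between round 1-handles of the almost toric diagram and the standard $1$-handle/$2$-handle decomposition that produces Figure~\ref{fig:pantsconvert1}. The only mild thing to be careful about is orientations and the choice of the distinguished ball $B_{p_3,q_3}$ among the three nodes — but by the symmetry of the almost toric picture under permutation of the three corners, the choice of which node to single out is irrelevant, and all three yield the same diffeomorphism type on the nose.
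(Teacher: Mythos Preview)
Your approach is essentially the same as the paper's, and it is correct except for one point you gloss over. You write that ``both sides arise by gluing $B_{p_3,q_3}$ to the same handle diagram, so no further computation is needed,'' but this is not automatic: Proposition~\ref{complement} identifies the complements, yet when you glue $B_{p_3,q_3}$ back in, the two gluing maps (the one coming from the almost toric inclusion into $\cp$ and the one corresponding to the green $2$-handle attachment in Figure~\ref{fig:handlebody}) may differ by a nontrivial self-diffeomorphism of $\partial B_{p_3,q_3}=L(p_3^2,p_3q_3-1)$. The paper closes exactly this gap by invoking Item~(2) of Remark~\ref{rem:pushingpast}: every diffeomorphism of $\partial B_{p_3,q_3}$ extends over $B_{p_3,q_3}$, so the resulting closed manifold is independent of the choice of gluing. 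You should cite this fact rather than dismiss the issue.

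The paper's argument is also slightly more direct than yours: instead of routing through Proposition~\ref{prop:final-ball} and Figure~\ref{fig:handlebody}, it simply observes that the complement of $B_{p_3,q_3}$ in $X_{p_1,p_2,p_3}$ (by construction, $B_{p_1,q_1}\cup B_{p_2,q_2}$ together with the pants cobordism) and the complement of $B_{p_3,q_3}$ in the almost toric $\cp$ are both obtained by the same round $1$-handle attachment to $B_{p_1,q_1}\cup B_{p_2,q_2}$, and then applies the extension fact. Your detour through Proposition~\ref{prop:final-ball} is valid but not needed here.
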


\begin{proof}
  Given a Markov triple $(p_1,p_2,p_3)$ the rational homology balls in $X_{p_1,p_2,p_3}$ and in $\cp$ with the almost toric structure associated to the triple are the same. Moreover, the previous proposition shows us that the complement of $B_{p_3,q_3}$ in each are both obtained by attaching the same round $1$-handle to $B_{p_1,q_1}\cup B_{p_2,q_2}$ and thus they are diffeomorphic. Since any diffeomorphism of $\partial B_{p_3,q_3}$ extends over $B_{p_3,q_3}$, see Item~(2) in Remark~\ref{rem:pushingpast}, we know that $X_{p_1,p_2,p_3}$ is diffeomorphic to $\cp$.  
\end{proof}

{\bf Transferring the cut in handlebody diagrams of $\cp$.} Notice that when transferring the cut, only two of the nodal singularities are involved, and hence only two of the rational homology balls. Specifically, there is the node that is being transferred and there is the node that has an affine transformation applied to it (the third node is unaffected). So we only need to consider the complement of one of the rational homology balls when studying transfer the cut. We know from Proposition~\ref{complement} that this complement is $S^1\times D^3$ with two $2$-handles attached, where the attaching circles of the $2$-handles correspond to the eigenlines of the nodes. Given an almost toric picture for $\cp$ the attaching curves of the $2$-handles occur on separate Heegaard tori in $S^1\times S^2$ and the order of those tori is important and is determined by the order on the nodal singularities. Transferring the cut corresponds to changing this order, but when one does this, one must apply the associated monodromy $A_{(p_i,q_i)}$ to the other attaching circle, this corresponds to the handle slide in the proof of Lemma~\ref{lem:mutate}. Similarly, one can read the curves in the opposite order (this corresponds to the diffeomorphism $\phi$ in the proof of Lemma~\ref{lem:mutate}). This is the almost toric geometry inspiration for the proof of Lemma~\ref{lem:mutate}.

%----------------------------------------------------------------------
\subsection{From handle descriptions to almost toric pictures}\label{sec:hbd2atf}
%----------------------------------------------------------------------
Recall from Section~\ref{subsec:pants}  that $X_{p_1,p_2,p_3}$ admits a symplectic handlebody decomposition for each Markov triple $(p_1, p_2, p_3)$. The following proposition shows that $X_{p_1,p_2.p_3}$ admits a smooth fibration by tori (and three singular tori) and after a deformation of the symplectic structure, we can arrange that this is a Lagrangian (almost toric) fibration. 

\begin{proposition}\label{prop:toric}
  For each Markov triple $(p_1,p_2,p_3)$, after a deformation of the symplectic structure $\omega_{p_1,p_2,p_3}$ there is a smooth map from $X_{p_1,p_2,p_3}$ to $\R^2$, with the generic pre-image of a point being a Lagrangian torus, and the identification of $X_{p_1,p_2,p_3}$ with $\cp$ in Corollary~\ref{3rdp}. This fibration agrees with the almost toric fibration of $\cp$ corresponding to the Markov triple $(p_1,p_2,p_3)$.
\end{proposition}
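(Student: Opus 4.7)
The plan is to build the smooth $T^2$-fibration on $X_{p_1,p_2,p_3}$ piece by piece, following the symplectic handlebody decomposition of Proposition~\ref{prop:pants-to-closed}, in such a way that each piece carries a local almost toric model matching the corresponding region of the almost toric base of $\cp$ in Figure~\ref{fig:atf-markov}. Once a global smooth fibration is in place and is known to be isomorphic to the almost toric fibration of $\cp$ under the identification of Corollary~\ref{3rdp}, a Moser-type argument deforms $\omega_{p_1,p_2,p_3}$ to make every fiber Lagrangian, yielding the desired almost toric structure.

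For the local models, I would first recall that each rational homology ball $B_{p_i,q_i}$ is the Milnor fiber of a cyclic quotient singularity and hence admits a well-known almost toric fibration over a triangle in $\R^2$ with exactly one nodal singularity whose eigendirection is $(p_i,q_i)$; this is precisely the local picture near a corner of Figure~\ref{fig:atf-markov}. For the middle piece, observe that the pants cobordism of Theorem~\ref{thm:symplectic-pants} together with its concave $3$-handle is, by Proposition~\ref{complement}, a round $1$-handle attached to $B_{p_1,q_1} \sqcup B_{p_2,q_2}$. Topologically this round $1$-handle is the preimage, under the moment map of the almost toric fibration of $\cp$, of the central region obtained from the base triangle by deleting small neighborhoods of the three corners. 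Since that central region contains no nodal singularities, the almost toric fibration restricts there to a genuine toric $T^2$-fibration with three elliptic corank-one boundary edges, and I would take this toric fibration as the model on the pants piece.

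To glue the models into a global smooth fibration, note that Propositions~\ref{prop:feet-to-waist}, \ref{prop:pants-equals-pants}, and~\ref{complement} show that the attachment of the pants cobordism to $B_{p_1,q_1}\,\natural\, B_{p_2,q_2}$ is combinatorially identical to the round $1$-handle attachment in the almost toric picture of $\cp$, with the $(p_i,q_i)$-torus knot attaching circles isotopic to the eigenline tori in the almost toric picture. Consequently the local models glue along collar neighborhoods of the $\bd B_{p_i,q_i}$ in a way compatible with the diffeomorphism of Corollary~\ref{3rdp}. The result is a smooth $T^2$-fibration of $X_{p_1,p_2,p_3}$ with exactly three pinched-torus singular fibers, coinciding with the almost toric fibration of $\cp$ as smooth fibrations.

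The final step is to deform $\omega_{p_1,p_2,p_3}$ so that this fibration becomes Lagrangian. By the theorem of Taubes used in the proof of Theorem~\ref{thm:sympCP2}, the form $\omega_{p_1,p_2,p_3}$ and the standard almost toric K\"ahler form on $\cp$ are connected by a path of symplectic forms; Moser's trick then produces an ambient diffeotopy pulling them to a single form, after which the fibers of the transported fibration are Lagrangian. I expect the main obstacle to lie precisely in the identification along the collars of $\bd B_{p_i,q_i}$: the convex-concave $2$-handle endows this torus with the Liouville structure inherited from the rational open book of Section~\ref{subsec:cap}, while the almost toric model equips the same torus with the characteristic foliation coming from the moment map. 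Reconciling these two structures along the collar requires a careful local analysis using Lemma~\ref{lem:coordinates} to match coordinates at each binding component, and it is exactly this mismatch that forces one to allow a deformation of the symplectic form in the statement.
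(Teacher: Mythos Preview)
Your approach is essentially the same as the paper's: both build the smooth torus fibration by gluing the almost toric models on the $B_{p_i,q_i}$ to a toric model on the round $1$-handle (the central region of the base), and then invoke Taubes to handle the symplectic side. The only substantive difference is in the final step. You propose to deform $\omega_{p_1,p_2,p_3}$ \emph{in situ} via Moser, and you anticipate difficulty matching the Liouville structure from the convex--concave handle to the moment-map foliation on the collars of $\partial B_{p_i,q_i}$, invoking Lemma~\ref{lem:coordinates} for this. The paper bypasses this entirely: since the diffeomorphism of Corollary~\ref{3rdp} already carries torus fibers to torus fibers, one simply \emph{pulls back} the almost toric symplectic form from $\cp$ and observes, by Taubes, that this pullback is deformation equivalent to $\omega_{p_1,p_2,p_3}$. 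No collar analysis or local coordinate matching is needed---the deformation absorbs any discrepancy between the two forms globally, not just near the gluing regions. Your route would work, but the paper's shortcut makes the proof a single paragraph.
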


\begin{proof}
  We know that each $B_{p_i,q_i}$ admits an almost toric fibration by Lagrangian tori from our discussion in Section~\ref{subsec:toric}. Moreover we know the round $1$-handle discussed in the proof of Proposition~\ref{complement} admits a toric fibration. The round $1$-handle used in our construction of $X_{p_1,p_2,p_3}$ possibly has a different symplectic structure on it, but it will nonetheless have a smooth toric fibration that extends the toric fibrations on the $B_{p_i,q_i}$. Thus the diffeomorphism from $X_{p_1,p_2,p_3}$ to $\cp$ in Corollary~\ref{3rdp} takes torus fibers to torus fibers. The pull-back of the symplectic from on $\cp$ will be deformation equivalent to the one we constructed $\omega_{p_1,p_2,p_3}$ by Taubes theorem \cite[Theorem~0.3]{Taubes:SWGr} and this completes the proof. 
\end{proof}

This establishes that the two descriptions of $\cp$, coming from $X_{p_1,p_2,p_3}$ and the almost toric picture corresponding to the Markov triple, are analogous. 

\begin{proof}[Proof of Theorem~\ref{thm:main2}]
  The statement of Theorem~\ref{thm:main2} follows from Proposition~\ref{prop:toric}.
\end{proof}

\subsection{General strategy} The results of our paper suggest a general strategy for constructing a symplectic handlebody description of any closed symplectic manifold that admits an almost toric fibration with its base being a convex polygon. For instance, Del Pezzo surfaces endowed with a monotone symplectic form admit such almost toric fibrations \cite{Vianna:tori}. 

For brevity, we describe the strategy informally, following the notation in \cite[Section 5]{Symington:four}, while providing references and citations for the reader's convenience. Let $X$ be an almost toric manifold with base $\mathcal{B} \subset \mathbb{R}^2$, where $\mathcal{B}$ is a convex polygon. Consider a hexagon $S \subset \mathcal{B}$ where the three edges are part of the edges of $\mathcal{B}$ and the other three are in the interior of $\mathcal{B}$. Assume further that $S$ does not contain any node and eigenline; see the region inside the blue dotted curves in Figure~\ref{fig:atf-markov} for example. By the discussion in \cite[Section 9]{Symington:four}, each edge of the hexagon $S$ that is contained in the interior of $\mathcal{B}$ represents a universally tight lens space. Denote them by $L(p_i,q_i)$, $1 \leq i \leq 3$. It follows from the convex condition on $\mathcal{B}$ that the components of $\mathcal{B} \setminus S$ are symplectic fillings of these lens spaces, call them $W_i$ for $1 \leq i \leq 3$. 

The discussion in Section~\ref{sec:geometry} can be adapted to build a (concave) pants cobordism among these three lens spaces, as shown in Figure~\ref{fig:pants}. Denote this cobordism by $C$. Now we can construct a symplectic handlebody diffeomorphic to $X$ by attaching the pants cobordism $C$ to $W_i$. For the handle decomposition of $W_i$ that comes from an almost toric fibration, refer to \cite[Section 9.3]{evans}; see also \cite[Section 2.8]{etnyre-roy} which reinterprets Lisca's classification of fillings of universally tight lens spaces \cite{Lisca:fillings}.

Lastly, since there is a unique symplectic structure on del Pezzo surfaces up to deformation equivalence \cite{LL:delPezzo1,LL:delPezzo2} (see also \cite{Salamon:Uniqueness}), the remaining arguments should be identical to the ones in Section~\ref{sec:hbd2atf}. 

\bibliography{references}
\bibliographystyle{plain}
\end{document}